 \DeclareMathOperator{\spt}{spt} \DeclareMathOperator{\inr}{int} \DeclareMathOperator{\dist}{dist}
 \DeclareMathOperator{\genus}{genus}
    \DeclareMathOperator{\area}{Area}
  \def\e{\epsilon}
  \def\a{\alpha}
    \def\d{\delta}
\def\mass{\underline{\underline{M}}}
\def\wt{\widetilde}
\def\wh{\widehat}
\def\ov{\overline}
\def\R{\mathbf{R}}
\def\N{\mathbb{N}}
\begin{document}

\newtheorem{thm}{Theorem}[section]
\newtheorem{lem}[thm]{Lemma}
\newtheorem{cor}[thm]{Corollary}

\theoremstyle{remark}
\newtheorem{rmk}[thm]{Remark}
\newtheorem{defn}[thm]{\bf{Definition}}

\def\square{\hfill${\vcenter{\vbox{\hrule height.4pt \hbox{\vrule
width.4pt height7pt \kern7pt \vrule width.4pt} \hrule height.4pt}}}$}

\def\T{\mathcal T}

\newenvironment{pf}{{\it Proof:}\quad}{\square \vskip 12pt}

\title{Area minimizing surfaces in mean convex 3-manifolds}
\author{Theodora Bourni}
\author{Baris Coskunuzer}
\address{Frei University, Berlin, Germany}
\email{bourni@math.fu-berlin.de}
\address{Koc University \\ Department of Mathematics \\ Sariyer, Istanbul 34450 Turkey}
\email{bcoskunuzer@ku.edu.tr}
\thanks{The second author is partially supported by EU-FP7 Grant IRG-226062, TUBITAK Grant 109T685 and TUBA-GEBIP Award.}

\maketitle


\newcommand{\BR}{\mathbf{R}}
\newcommand{\BRR}{\mathbf{R}^3}
\newcommand{\BC}{\mathbf{C}}
\newcommand{\BZ}{\mathbf{Z}}
\newcommand{\BN}{\mathbf{N}}
\newcommand{\BB}{\mathbf{B}}
\newcommand{\A}{\mathcal{A}}
\newcommand{\B}{\mathcal{B}}
\newcommand{\U}{\mathcal{U}}
\newcommand{\C}{\mathcal{C}}
\newcommand{\CC}{\mathfrak{C}}

\begin{abstract}

In this paper, we give several results on area minimizing surfaces in strictly mean convex $3$-manifolds. First, we study the genus of absolutely
area minimizing surfaces in a compact, orientable, strictly mean convex $3$-manifold $M$ bounded by a simple closed curve in $\partial M$. Our main
result is that for any $g\ge 0$, the space of simple closed curves in $\partial M$ where all the absolutely area minimizing surfaces they bound in
$M$ has genus $\geq g$ is open and dense in the space $\mathcal{A}$ of nullhomologous simple closed curves in $\partial M$. For showing this we prove a bridge principle for absolutely area minimizing surfaces. Moreover, we
show that for any $g\ge 0$, there exists a curve in $\A$ such that the minimum genus of the absolutely area minimizing surfaces it bounds is exactly
$g$.

As an application of these results, we further prove that the simple closed curves in $\partial M$ bounding more than one minimal surface in $M$ is
an open and dense subset of $\mathcal{A}$. We also show that there are disjoint simple closed curves in $\partial M$ bounding minimal surfaces in $M$
which are not disjoint. This allows us to answer a question of Meeks, by showing that for any strictly mean convex $3$-manifold $M$, there exists a
simple closed curve $\Gamma$ in $\partial M$ which bounds a stable minimal surface which is not embedded.

\end{abstract}

\section{Introduction}

The Plateau problem concerns the existence of an area minimizing disk with boundary a given curve in an ambient manifold $M$. This problem was solved in the
case when the ambient manifold is $\BR^3$ by Douglas \cite{Do}, and Rado \cite{Ra} in the early 1930s. Later, it was generalized by Morrey \cite{Mo} for
Riemannian manifolds. In the 1980s, Meeks and Yau showed that if $M$ is a mean convex $3$-manifold, and $\Gamma$ is a simple closed curve in $\partial
M$, then any area minimizing disk with boundary $\Gamma$ is embedded \cite{MY1}. Later, White gave a generalization of this result to any genus
\cite{Wh2}.

In the early 1960s, the same question was studied for absolutely area minimizing surfaces, i.e. for surfaces that minimize area among all orientable
surfaces with a given boundary (without restriction on the genus). Using techniques from geometric measure theory, Federer and Fleming \cite{FeF}
were able to solve this problem by proving the existence of an absolutely area minimizing integral current (see also \cite{DG, Giu} for the existence
of a minimizing Caccioppoli set). In \cite{ASSi} Almgren, Schoen and Simon showed that this current is a smooth embedded surface away from its
boundary and Hardt \cite{H} showed that it is also smooth at the boundary, provided that the prescribed boundary is smooth and lies on a convex set.
Later, Hardt and Simon \cite{HSi}, improved this boundary regularity result, by dropping the assumption that  the prescribed boundary lies on a
convex set.

It can be seen that there are two main versions of the Plateau problem; one of them is that of fixed genus (area minimizing in a fixed topological
class), and the other one is with no restriction on the genus (absolutely area minimizing case). There have also been many important results on the
{\em a priori} bounds on the genus of an absolutely area minimizing surface bounded by a given simple closed curve \cite{HSi}.

In this paper, we study the genus of  absolutely area minimizing surfaces $\Sigma$ in a compact, orientable, strictly mean convex $3$-manifold $M$,
with boundary $\partial \Sigma$  a simple closed curve lying in $\partial M$. We consider the stratification of the space of nullhomologous (bounding
a surface in $M$) simple closed curves in $\partial M$ with respect to the minimum genus of the absolutely area minimizing surfaces that they bound.
In particular, we let $\mathcal{A}$ be the space of all nullhomologous simple closed curves in $\partial M$ with the $\mathcal C^0$ topology (see
Definition \ref{c0rmk}).  By the previously mentioned results, any $\Gamma\in \mathcal {A}$ bounds an embedded absolutely area minimizing surface
$\Sigma$ in $M$ with $\partial \Sigma = \Gamma$. Let $\mathcal{A}_g$ be the set of all the curves in $\mathcal{A}$ such that any embedded absolutely
area minimizing surface in $M$ that they bound has genus $\geq g$. Then, clearly $\A=\A_0\supset \A_1\supset \A_2 \supset ...\supset \A_n \supset
...$. We show that for any $g\geq 0$, $\A_g$ is an open subset of $\A$ (Lemma \ref{Agopen}).

Then, we define an operation, which we call {\em horn surgery}, on a given simple closed curve $\Gamma$ in $\partial M$ which adds a {\em thin
handle} to an absolutely area minimizing surface $\Sigma$ which $\Gamma$ bounds in $M$. In other words, by modifying the boundary curve, the
operation increases the genus of an absolutely area minimizing surface (See Figure 4 and Figure 5). Moreover, this new curve can be made as close as
we want to the original curve. Hence by using the horn surgery operation, it is easy to show that $\A_g$ is not only open, but also dense in $\A$
(Theorem \ref{Adense}). On the other hand, to define the horn surgery operation, we prove {\em a bridge principle for absolutely area minimizing
surfaces} (Lemma \ref{bridge principle}).

Next, we study the the space $\B_g$ of all the curves in $\mathcal{A}$, such that the minimum genus of the embedded absolutely area minimizing surfaces in
$M$ that they bound is exactly $g$, i.e. $\B_g=\A_g \setminus \A_{g+1}$. Then, clearly $\A = \bigcup_{g=0}^{\infty} \B_g$ and $\B_g\cap \B_{g'} =
\emptyset$ for any $g\neq g'$. Again by using the horn surgery operation, we show that for any $g\geq 0$,  $\B_g$ is not empty (Theorem \ref{Bgne}).
In other words, we show that for any $g\geq 0$, there is a simple closed curve $\Gamma_g$ in $\partial M$ that bounds an absolutely area minimizing
surface $\Sigma$ with genus $g$. Also, we prove that for any $g$, $\B_g$ is nowhere dense in $\A$ (Corollary \ref{Bndense}). Furthermore, we show
that there exist simple closed curves $\Gamma$ in $\partial M$ which bound two absolutely area minimizing surfaces of different genus (Theorem
\ref{diffgenus}).

In Section \ref{minsurfaces section}, we further derive several interesting results about minimal surfaces in strictly mean convex $3$-manifolds by
using the previously mentioned results. We show that some important properties for properly embedded area minimizing surfaces in such manifolds are not valid for
properly embedded minimal surfaces. First, we show that curves that bound more that one minimal surface are generic for a strictly mean convex
$3$-manifold $M$ (Theorem \ref{nonuniquecurves}). In \cite{Co1}, and \cite{CE}, it is proven that a generic nullhomotopic simple closed curve in
$\partial M$ bounds a unique area minimizing disk, and similarly, a generic nullhomologous simple closed curve in $\partial M$ bounds a unique
absolutely area minimizing surface in $M$. However, here we show that when we relax the condition of being area minimizing to just minimal, the
situation is completely opposite.

Finally, we generalize Peter Hall's results \cite{Ha}, which answers Meeks' questions, to any strictly mean convex $3$-manifold $M$. As previously
mentioned, in \cite{MY1}, Meeks and Yau proved that any area minimizing disk in a mean convex $3$-manifold bounded by a simple closed curve in
$\partial M$ must be embedded . After establishing this result, Meeks posed the question of whether or not the same holds for stable minimal
surfaces. Then, Hall constructed an example of a simple closed curve $\Gamma$ in $S^2=\partial \BB^3$, where $\BB^3$ is the unit $3$-ball in $\R^3$,
such that $\Gamma$ bounds a stable minimal disk $M$ in $B^3$ which is not embedded. This shows that if we relax the area minimizing condition to just
being minimal again, the embeddedness result of Meeks and Yau is no longer valid. We generalize Hall's example, and show that this is true for any
strictly mean convex $3$-manifold (Theorem \ref{nonembeg}).

To construct the nonembedded minimal examples, we first show that for any strictly mean convex $3$-manifold $M$, there are disjoint simple closed
curves $\Gamma_1$ and $\Gamma_2$ in $\partial M$ that bound  minimal surfaces, $M_1$ and $ M_2$ respectively, in $M$, such that $M_1\cap M_2 \neq
\emptyset$ (Theorem \ref{intersectingminsurfaces}). This is very interesting since if $M$ is a mean convex $3$-manifold with trivial second homology,
and $\Gamma_1,\Gamma_2$ are two disjoint simple closed curves in $\partial M$,  then any two absolutely area minimizing surfaces $\Sigma_1, \Sigma_2$
with $\partial \Sigma_1 = \Gamma_1,  \partial \Sigma_2 = \Gamma_2$ are also disjoint \cite{Co1}. The same holds for area minimizing disks, too.
Hence, again the case of minimal surfaces, and that of area minimizing are very different. Then, by connecting these intersecting minimal surfaces,
which have disjoint boundaries, with a {\em bridge}, we construct  nonembedded examples of minimal surfaces in the strictly mean convex $3$-manifold
$M$. Thus, we show that for any such $M$, there exists a simple closed curve $\Gamma$ in $\partial M$ which bounds a stable minimal surface, and in
particular a stable minimal disk, in $M$, which is not embedded (Theorem \ref{nonembeg}).

\subsection*{Acknowledgements:}

We would like to thank Tolga Etgu and Brian White for very useful conversations.

\section{Preliminaries}

In this section we give some standard definitions as well as an overview of the basic known results which we use in the following sections.

\begin{defn} An {\em area minimizing disk} is a disk which has the smallest area among all disks with the same boundary. An {\em area minimizing surface}
is a surface which has the smallest area among all surfaces with the same genus and the same boundary. An {\em absolutely area minimizing surface} is
a surface which has the smallest area among all orientable surfaces (with no topological restriction) with the same boundary.
\end{defn}

\begin{defn} Let $M$ be a compact Riemannian $3$-manifold with boundary. Then $M$ is called {\em mean convex} (or sufficiently convex) if the following conditions hold:

\begin{itemize}

\item[(i)] $\partial M$ is piecewise smooth.

\item[(ii)] Each smooth subsurface of $\partial M$ has nonnegative mean curvature with respect to the inward normal.

\item[(ii)] There exists a Riemannian manifold $N$ such that $M$ is isometric to a submanifold of $N$ and
each smooth subsurface $S$ of $\partial M$  extends to a smooth embedded surface $S'$ in $N$ such that $S' \cap M = S$.

\end{itemize}

If in (ii) of the definition we require that each smooth subsurface has strictly positive mean curvature, then $M$ is called {\em strictly mean convex}.
\end{defn}

\begin{defn} A simple closed curve is called an {\em extreme curve} if it is on the boundary of its convex hull.
A simple closed curve is called an {\em $H$-extreme curve} if it is a curve in the boundary of a mean convex manifold $M$.
\end{defn}

\begin{defn} An embedded surface $S$ in a $3$-manifold $M$ is called {\em properly embedded} if $S\cap \partial M = \partial S$.
\end{defn}

We now state the main facts which we will be using in the following sections.

\begin{thm}\label{meeksyau}\cite{MY2, MY3}
Let $M$ be a compact, mean convex $3$-manifold, and $\Gamma\subset\partial M$ be a nullhomotopic simple closed curve. Then, there exists an area
minimizing disk $ D\subset M$ with $\partial  D = \Gamma$. Moreover, all such disks are properly embedded in $M$ and they are pairwise disjoint.
Furthermore, if $\Gamma_1, \Gamma_2 \subset \partial M$ are two disjoint simple closed curves, then two area minimizing disks $ D_1$ and $ D_2$ spanning $\Gamma_1$ and $
\Gamma_2$ respectively are also disjoint.
\end{thm}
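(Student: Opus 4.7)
The plan is to address the three assertions in turn: existence, embeddedness/proper embeddedness, and disjointness. For each I will use tools that are standard in this area; since the statement is attributed to Meeks--Yau, my sketch follows the spirit of their arguments.

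For existence, I would start with Morrey's solution to the Plateau problem in Riemannian manifolds: since $\Gamma$ is nullhomotopic in $M$, it bounds a disk in the ambient manifold $N$ provided by condition (iii) of mean convexity, and Morrey's theorem produces a (possibly branched) least area disk $D_0$ in $N$ with $\partial D_0=\Gamma$. The main point is then to confine $D_0$ to $M$. Using mean convexity, I would argue by the maximum principle: if $D_0$ leaves $M$ at a subsurface $S'$ of $\partial M$ (extended to $N$ as in condition (iii)), then the tangential contact of the minimal disk $D_0$ with $S'$ from the outside violates the strong maximum principle because $S'$ has nonnegative mean curvature with respect to the inward normal into $M$. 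Hence $D_0\subset M$, and in fact $D_0\cap\partial M=\Gamma$, giving proper embeddedness once interior regularity (Osserman's branch point removal, combined with the fact that area minimizers have no true branch points) is applied.

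For the embeddedness claim, I would use the Meeks--Yau tower construction. Suppose $D\subset M$ is a least area disk with a self-intersection; then in a small tubular neighborhood $U$ of a double point, $D$ has two sheets meeting transversely along an arc or loop. Passing to successive finite covers of (regular neighborhoods of) $D$ in $M$ one builds a "tower" in which the preimages of the two sheets eventually separate. Applying a $\delta$-reduction/cut-and-paste on the top level of the tower replaces the two sheets by a pair of disjoint disks with the same boundary and strictly smaller total area, then pushing this construction back down the tower produces a disk with boundary $\Gamma$ whose area is strictly less than that of $D$, contradicting minimality. This shows that any area minimizing disk with boundary $\Gamma$ is embedded, and the $\partial D=\Gamma\subset\partial M$ together with the maximum principle argument above gives proper embeddedness.

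For pairwise disjointness, both inside a single $\Gamma$-class and across disjoint boundaries $\Gamma_1,\Gamma_2$, I would use a roundoff/exchange argument. If two embedded least area disks $D_1,D_2$ (with the same or with disjoint boundaries on $\partial M$) intersect, then $D_1\cap D_2$ is a finite collection of simple closed curves and arcs joining boundary components. Cutting each $D_i$ along $D_1\cap D_2$ and recombining the pieces produces two new disks (or a disk plus a sphere, which we discard since $M$ has no closed minimizers bounding volume by strict mean convexity, or we modify by small perturbation) with the same total area but with corners along the intersection curves. Smoothing these corners strictly decreases the total area, contradicting the fact that each $D_i$ was minimizing in its class. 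Hence $D_1\cap D_2=\emptyset$.

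The main obstacle is the tower step in the embeddedness proof: controlling the branched covers near the singular set of $D$ and verifying that the $\delta$-reduction genuinely decreases area after descending the tower requires careful local analysis of the double-point set and some topological bookkeeping. The existence and disjointness parts reduce, respectively, to a maximum-principle barrier argument and a standard cut-and-paste, which are comparatively routine once one has the regularity of least area disks at hand.
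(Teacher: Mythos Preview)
The paper does not prove this statement at all: Theorem~\ref{meeksyau} is stated in the Preliminaries section purely as a citation of the Meeks--Yau results \cite{MY2, MY3}, with no accompanying proof. Your sketch is a reasonable outline of the original Meeks--Yau strategy (Morrey existence plus mean-convex barrier, tower argument for embeddedness, exchange/roundoff for disjointness), but there is nothing in this paper to compare it against.
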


There is an analogous fact for area minimizing surfaces, too.

\begin{thm}\cite{FeF, ASSi, H}\label{absmincur}
Let $M$ be a compact, strictly mean convex $3$-manifold and $\Gamma\subset\partial M$ a nullhomologous simple closed curve. Then there exists $\Sigma\subset M$ an
absolutely area minimizing surface with $\partial \Sigma = \Gamma$ and each such $\Sigma$ is smooth away from its
boundary and it is smooth around points of the boundary where $\Gamma$ is smooth.

\end{thm}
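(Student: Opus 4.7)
\medskip
\noindent\textbf{Proof proposal.} The plan is to obtain existence via the geometric measure theory of integral currents, to use strict mean convexity of $\partial M$ as a barrier that confines the minimizer to $M$, and then to invoke the interior and boundary regularity theorems of Almgren-Schoen-Simon and Hardt as black boxes.

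First, by the definition of mean convexity, $M$ embeds isometrically in a Riemannian $3$-manifold $N$ in which every smooth piece of $\partial M$ extends to a smooth embedded surface. Since $\Gamma$ is nullhomologous in $M$, it is also nullhomologous in $N$, so the class of integral $2$-currents $T$ in $N$ with $\partial T=\Gamma$ (viewed as a $1$-current) is nonempty. A mass-minimizing sequence in this class has uniformly bounded mass, and the Federer-Fleming compactness theorem together with lower semicontinuity of mass under flat convergence \cite{FeF} yields a limit current $T$ of least mass in this class.

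The key geometric step is then to show $\spt T\subset M$. Each smooth subsurface of $\partial M$, viewed inside $N$, has strictly positive mean curvature with respect to the inward normal, and the seams where such pieces meet are convex toward $M$. A standard cut-and-paste argument shows that any portion of $\spt T$ lying in $N\setminus M$ can be replaced by a piece on $\partial M$ of strictly smaller mass, contradicting minimality; equivalently, one invokes the strong maximum principle for minimizing currents against a smooth strict barrier. Hence $T$ is a mass-minimizing integral $2$-current supported in $M$ with $\partial T=\Gamma$.

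For regularity, interior smoothness of $\spt T\setminus\Gamma$ follows from the Almgren-Schoen-Simon theorem \cite{ASSi}: codimension-one mass-minimizing currents are smooth away from a singular set of codimension at least $7$, which is empty in our ambient dimension $3$, so $\spt T\setminus\Gamma$ is an embedded smooth surface $\Sigma$. At a point of $\Gamma$ where $\Gamma$ is smooth, Hardt's boundary regularity theorem \cite{H} upgrades this to smoothness up to the boundary, after a local reduction to the case where $\Gamma$ lies on a convex supporting surface. The main conceptual obstacle is the confinement step, where strict mean convexity must be turned into a workable barrier; once that is in hand the existence and regularity pieces are standard citations.
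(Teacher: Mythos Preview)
The paper does not give its own proof of this statement: it is recorded as a preliminary result with citations \cite{FeF, ASSi, H}, followed by a one-line remark that the extension of the regularity theorems from $\BR^n$ to a strictly mean convex $3$-manifold is carried out in \cite[Theorem 6.3]{Wh2}. Your outline --- Federer--Fleming compactness for existence, strict mean convexity as a barrier forcing $\spt T\subset M$, then interior regularity via \cite{ASSi} and boundary regularity via \cite{H} --- is exactly the standard argument those citations encode, so you are in agreement with the paper's approach.

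One point deserves tightening. In your boundary step you write ``after a local reduction to the case where $\Gamma$ lies on a convex supporting surface,'' but strict mean convexity of $\partial M$ does not in general produce a locally convex hypersurface through $\Gamma$, so Hardt's hypothesis in \cite{H} is not obviously met by such a reduction. The paper sidesteps this by invoking \cite[Theorem 6.3]{Wh2}, where White explains how the arguments of \cite{ASSi, H} go through in the strictly mean convex manifold setting; alternatively one can cite \cite{HSi}, which drops the convexity assumption on the boundary altogether. Replacing your informal reduction with either of these references would close the only soft spot in your proposal.
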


\begin{rmk}[{\it on the proof of Theorem \ref{absmincur}}] The regularity results in \cite{ASSi, H} extend from the ambient space being $\R^n$ to $M$, a manifold
as in the Theorem \ref{absmincur}, as explained in \cite[Theorem 6.3]{Wh2}.
 \end{rmk}

\begin{thm}\label{white2} \cite{Wh2} Let $M$ be a compact, orientable, strictly mean convex $3$-manifold and $\Gamma$ a simple closed curve in $\partial M$.
For each $g\geq 0$, define \[a(g)=\inf|\Sigma|\] where the infimum is taken over all piecewise smooth embedded surfaces $\Sigma$ in $M$, with genus
$g$ and boundary $\Gamma$ and where $|\Sigma|$ denotes the area of a surface $\Sigma$. Then, if $a(g)< a(g-1)$, the infimum $a(g)$ is attained.
\end{thm}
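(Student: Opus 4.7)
The plan is a direct method argument: take a minimizing sequence of genus-$g$ embedded surfaces, extract a weak subsequential limit as an integer rectifiable current, use the regularity theory for absolutely area minimizing surfaces to upgrade the limit to a smooth embedded surface, and finally use the hypothesis $a(g) < a(g-1)$ to show that this surface has genus exactly $g$.

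First I would choose piecewise smooth embedded orientable surfaces $\Sigma_i \subset M$ of genus $g$ with $\partial \Sigma_i = \Gamma$ and $|\Sigma_i| \to a(g)$. Orienting each $\Sigma_i$ and viewing it as an integer rectifiable current $T_i$ with boundary the oriented curve $\Gamma$, the Federer--Fleming compactness theorem applied to the uniformly mass-bounded sequence $\{T_i\}$ produces a weak subsequential limit $T$, an integer rectifiable current with boundary $\Gamma$ and $\mathbf{M}(T) \le a(g)$ by lower semi-continuity of mass. The limit $T$ need not be smooth, so I would further minimize mass in the integer homology class of $T$ among integral currents in $M$ with boundary $\Gamma$; the resulting minimizer $T^{*}$ satisfies $\mathbf{M}(T^{*}) \le \mathbf{M}(T) \le a(g)$. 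By Theorem~\ref{absmincur} together with the interior and boundary regularity results of \cite{ASSi, H}, $T^{*}$ is represented by a smooth embedded orientable surface $\Sigma^{*}$ with $\partial \Sigma^{*} = \Gamma$ and some genus $g^{*}$; because $T$ arose as a current limit of genus-$g$ embedded surfaces and subsequent mass-minimization within a homology class can only simplify topology, I would expect $g^{*} \le g$.

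With $\Sigma^{*}$ a smooth embedded surface of genus $g^{*} \le g$ and area $|\Sigma^{*}| \le a(g)$, the hypothesis $a(g) < a(g-1)$ rules out $g^{*} < g$. The key observation is that the sequence $a$ is non-increasing in $g$: attaching a thin handle inside a small coordinate ball to any embedded genus-$h$ surface produces a piecewise smooth embedded genus-$(h+1)$ surface whose area increase can be made arbitrarily small, so $a(h+1) \le a(h)$. Consequently, if $g^{*} \le g - 1$, then $a(g-1) \le a(g^{*}) \le |\Sigma^{*}| \le a(g)$, contradicting $a(g) < a(g-1)$. Hence $g^{*} = g$ and $|\Sigma^{*}| = a(g)$, so $\Sigma^{*}$ realizes the infimum.

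The main obstacle is the assertion $g^{*} \le g$ used in the regularity step, which requires a careful analysis of how the genus behaves under weak current (or varifold) limits of embedded orientable surfaces and under subsequent mass-minimization in a homology class; intuitively, handles may pinch off but cannot spontaneously appear. Once this topological control is secured, the gap hypothesis $a(g) < a(g-1)$ cleanly forces $g^{*} = g$, and the remainder of the proof is a direct application of the compactness of integer rectifiable currents together with the regularity already available from Theorem~\ref{absmincur}.
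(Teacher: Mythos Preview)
This theorem is not proved in the paper at all: it is quoted in the Preliminaries section as a result of White \cite{Wh2}, with no argument given. So there is no ``paper's own proof'' to compare against, and your proposal has to be judged on its own merits and against what White actually does in \cite{Wh2}.

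Your outline has a genuine gap, and it is precisely at the step you flag as the ``main obstacle''. Passing to a weak current limit $T$ and then mass-minimizing in its integral homology class to get $T^{*}$ gives you \emph{no} upper bound on the genus of $\Sigma^{*}$. If $H_2(M;\BZ)=0$ (the typical situation here), every orientable surface with boundary $\Gamma$ is homologous to every other, so your $T^{*}$ is simply the absolutely area minimizing surface with boundary $\Gamma$, and its genus is determined by $\Gamma$ alone, not by $g$. The present paper itself shows (via horn surgery, Lemma~\ref{thinhandle} and Theorem~\ref{Bgne}) that this absolute minimizer can have genus strictly larger than $g$. In that case $g^{*}>g$, your dichotomy ``$g^{*}\le g$'' fails, and the inequality chain $a(g-1)\le a(g^{*})\le |\Sigma^{*}|\le a(g)$ never gets started. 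The slogan ``mass-minimization can only simplify topology'' is false for absolutely area minimizing surfaces.

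White's actual argument in \cite{Wh2} avoids currents for exactly this reason: he stays in the class of \emph{embedded} surfaces throughout, takes a varifold limit of the minimizing sequence, and invokes the Almgren--Simon interior regularity for embedded minimizers (together with his own boundary regularity) to get a smooth embedded limit. The hypothesis $a(g)<a(g-1)$ is used not after the fact to rule out $g^{*}<g$, but during the limiting process, to prevent handles from pinching: if a neck of some $\Sigma_i$ were thin, cutting it would produce an embedded surface of genus $\le g-1$ with area close to $a(g)<a(g-1)$, a contradiction. This ``no small necks'' control is what keeps the genus from dropping and is the essential content of the theorem; it cannot be recovered from the current-theoretic compactness you invoke.
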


\begin{thm}\label{white2c}\cite{Wh2}
Let $M$ be a compact, orientable, strictly mean convex $3$-manifold, $\Gamma$ a simple closed curve in $\partial M$ and $\Sigma$ a stable surface in
$M$ with $\partial \Sigma=\Gamma$. Assume that for a geodesic ball $\B(x, R)$ in $M$ we have
\[\sup_{\B(y,r)\subset\B(x,R)}\frac{|\Sigma\cap\B(y,r)|}{\pi r^2}=C<\infty.\]
Then the following hold:
\begin{itemize}
\item If $\B(x,R)\cap\Gamma=\emptyset$, then the principal curvatures of $M\cap \B(x, R/2)$ are bounded by a constant that depends only on $R, C$.
\item  If $\B(x,R)\cap\Gamma$ is $C^{2,\a}$ , then the principal curvatures of $M\cap \B(x, R/2)$ are bounded by a constant that depends only on $R, C$ and $\Gamma$.
\end{itemize}
\end{thm}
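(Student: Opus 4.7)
The plan is to prove this curvature estimate by the standard blow–up / compactness–rigidity argument of Schoen. Arguing by contradiction, suppose no such bound exists. Then there is a sequence of stable surfaces $\Sigma_n$ satisfying the hypotheses (with the same density bound $C$ and the same ball $\B(x,R)$) for which $\sup_{\B(x,R/2)} |A_{\Sigma_n}| \to \infty$. Using a standard point-picking trick on the scale-invariant quantity $|A|(p)\,\dist(p,\partial \B(x,R))$, choose $p_n\in\Sigma_n\cap\B(x,R/2)$ almost realizing the supremum, and rescale the ambient metric by $\lambda_n:=|A_{\Sigma_n}|(p_n)\to\infty$ around $p_n$. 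The rescaled surfaces $\widetilde\Sigma_n$ then satisfy $|A_{\widetilde\Sigma_n}|(0)=1$, while $|A_{\widetilde\Sigma_n}|\le 2$ on balls whose radii tend to $+\infty$.

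Next, I would extract a subsequential limit. The hypothesis gives a uniform quadratic area bound $|\widetilde\Sigma_n\cap \B(y,r)|\le C\pi r^2$, and since the ambient metrics $\lambda_n^2 g$ converge smoothly to the flat metric on $\R^3$, standard compactness (Allard, combined with curvature bounds and stability for smooth convergence) yields a subsequence converging, smoothly on compact sets, to a properly embedded stable minimal surface $\Sigma_\infty\subset\R^3$ with $|A_{\Sigma_\infty}|(0)=1$.

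In the interior case $\B(x,R)\cap\Gamma=\emptyset$, $\Sigma_\infty$ is complete and has quadratic area growth. Then the Fischer–Colbrie–Schoen theorem (or, directly, the stability inequality combined with Simons' identity and a logarithmic cutoff) forces $\Sigma_\infty$ to be a plane, contradicting $|A_{\Sigma_\infty}|(0)=1$. In the boundary case, either $\lambda_n\,\dist(p_n,\Gamma)\to\infty$ and the previous argument applies verbatim, or $\lambda_n\,\dist(p_n,\Gamma)$ stays bounded. In that situation the $C^{2,\alpha}$ regularity of $\Gamma$ makes the rescaled boundary curves converge in $C^{2,\alpha}$ to a straight line $\ell\subset\R^3$, so $\Sigma_\infty$ is a stable minimal surface in a half-space with straight-line boundary $\ell$. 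Schwarz reflection across $\ell$ then produces a complete stable minimal surface in $\R^3$; again Fischer–Colbrie–Schoen forces it to be a plane, contradicting $|A_{\Sigma_\infty}|(0)=1$.

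The main obstacle is the boundary case: one must know that stable minimal surfaces with $C^{2,\alpha}$ boundary enjoy enough boundary regularity to pass to smooth limits up to $\Gamma$, and that the reflected limit is smooth across $\ell$. This is precisely where the $C^{2,\alpha}$ hypothesis on $\Gamma$ is essential, since without it the rescaled boundaries would not converge to a line nor would Schwarz reflection give a smooth extension. The remaining ingredients—stability passing to the limit, quadratic area bound passing to the limit, and the plane rigidity—are standard once the correct smooth convergence is in place.
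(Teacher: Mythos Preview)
The paper does not prove this statement: Theorem~\ref{white2c} is stated in the Preliminaries section with the citation \cite{Wh2} and no proof is given, so there is no ``paper's own proof'' to compare against. Your blow-up/point-picking argument together with the Fischer--Colbrie--Schoen rigidity theorem is the standard route to such curvature estimates for stable minimal surfaces and is essentially what underlies White's original proof in \cite{Wh2}; the sketch you give is correct in outline, with the boundary case handled exactly as you indicate via Schwarz reflection once the $C^{2,\alpha}$ hypothesis guarantees the rescaled boundaries straighten out to a line.
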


Finally, we state two results on the generic uniqueness of area minimizing disks, and absolutely area minimizing surfaces for $H$-extreme curves.

\begin{thm}\cite{Co1}
Let $M$ be a compact, orientable, mean convex $3$-manifold with $H_2(M,\BZ)=0$. Then the following hold:
\begin{itemize}
\item[(i)]
For a generic nullhomotopic (in
$M$) simple closed curve $\Gamma$ in $\partial M$, there exists a unique area minimizing disk $D$ in $M$ with $\partial D = \Gamma$.
\item[(ii)] For a generic nullhomologous (in
$M$) simple closed curve $\Gamma$ in $\partial M$, there exists a unique absolutely area minimizing surface $\Sigma$ in $M$ with $\partial \Sigma =
\Gamma$.

\end{itemize}
\end{thm}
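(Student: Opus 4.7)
The plan is a one-parameter family plus volume-monotonicity argument. Given $\Gamma\in\A$ and any $C^0$-neighborhood $U$ of $\Gamma$, the goal is to produce $\Gamma'\in U$ bounding a unique minimizer; once a dense set of such "good" curves is found in every neighborhood, density of unique-minimizer curves is established, and one upgrades to a Baire-generic statement.

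First I would construct a one-parameter family $\{\Gamma_t\}_{t\in(-\e,\e)}\subset U$ of pairwise disjoint simple closed curves in $\partial M$ with $\Gamma_0=\Gamma$, obtained by pushing $\Gamma$ along a normal direction inside $\partial M$. For small $\e$ each $\Gamma_t$ remains nullhomotopic (resp. nullhomologous) in $M$. Then, for each $t$, select a minimizing disk $D_t$ via Theorem \ref{meeksyau} for part (i), or an absolutely area minimizing surface $\Sigma_t$ via Theorem \ref{absmincur} for part (ii). The key ingredient is disjointness of the selected minimizers: for $s\ne t$, any such minimizers with boundaries $\Gamma_s,\Gamma_t$ are disjoint in $M$. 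For area minimizing disks this is the last assertion of Theorem \ref{meeksyau}. For absolutely area minimizing surfaces, I would argue by cut-and-paste: if $\Sigma_s$ and $\Sigma_t$ meet (transversally, after a small perturbation), swap components along $\Sigma_s\cap\Sigma_t$ to obtain competitors $\Sigma_s',\Sigma_t'$ with $\partial\Sigma_s'=\Gamma_s$, $\partial\Sigma_t'=\Gamma_t$, and $|\Sigma_s'|+|\Sigma_t'|=|\Sigma_s|+|\Sigma_t|$; smoothing the creases strictly decreases total area, contradicting minimality. The hypothesis $H_2(M,\BZ)=0$ is used here to ensure that the formal difference bounds, so that the swap produces legitimate competitors with the prescribed boundaries.

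Second, with $\{D_t\}$ (resp. $\{\Sigma_t\}$) pairwise disjoint and each separating $M$, define $V(t)$ to be the volume of the open region of $M$ lying on a fixed side of $D_t$ (resp. $\Sigma_t$). Then $V$ is monotone in $t$, hence continuous except on an at most countable set. I claim uniqueness holds at every continuity point $t_0$: any second minimizer for $\Gamma_{t_0}$ would, by the same disjointness argument, be disjoint from all $D_t$ with $t\ne t_0$ and from the original $D_{t_0}$, and would enclose a region of positive volume with $D_{t_0}$, producing a jump in $V$ at $t_0$. Since continuity points are dense in $(-\e,\e)$, $U$ contains a curve with a unique minimizer, giving density. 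To upgrade density to "generic" (a dense $G_\delta$), one would show that the set of curves with a unique minimizer is a countable intersection of open sets, using sequential compactness of minimizers via uniform area bounds together with the curvature estimates of Theorem \ref{white2c}.

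The main obstacle is the disjointness step in case (ii). Unlike for area minimizing disks, where the topology is fixed and the Meeks--Yau replacement argument applies directly, for absolutely area minimizing surfaces the genus can vary and change under surgery along $\Sigma_s\cap\Sigma_t$. One must verify that the cut-and-paste produces orientable competitors with the correct boundaries, that the area-balancing identity really holds in this generality, and that the smoothing strictly decreases area; this is precisely where the hypothesis $H_2(M,\BZ)=0$ enters, ruling out homological obstructions to the swap. A secondary but nontrivial point is that the normal perturbation used to build $\{\Gamma_t\}$ must preserve the nullhomology (resp. nullhomotopy) class, which is automatic for small $t$ but needs a brief verification.
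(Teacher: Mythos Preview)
This theorem is not proved in the paper; it is stated in the preliminaries as a result quoted from \cite{Co1}, so there is no in-paper proof to compare against. Your outline is essentially the argument of \cite{Co1}: foliate a $C^0$-neighborhood of $\Gamma$ by disjoint curves $\{\Gamma_t\}$, use the disjointness of minimizers with disjoint boundaries (Meeks--Yau for disks; the cut-and-paste/exchange-roundoff trick together with $H_2(M,\BZ)=0$ for absolutely area minimizing surfaces), and run a volume-counting argument to conclude that all but countably many $t$ bound a unique minimizer.

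One small difference worth noting: the version of the argument the paper itself invokes later (see the proof of Theorem~\ref{intersectingminsurfaces}, which explicitly cites ``the technique in \cite[Theorem 3.2]{Co1}'') does not use a single monotone volume function $V(t)$ as you do, but rather assigns to each $t$ the region $N_t$ \emph{between} two distinct minimizers $\Sigma_t^+,\Sigma_t^-$ and observes that the $N_t$ are pairwise disjoint with $\sum_t |N_t|\le |M|<\infty$, forcing $|N_t|=0$ for all but countably many $t$. Your $V(t)$-monotonicity variant is an equivalent packaging of the same idea. Your final step, upgrading density to genericity, is left vague; in \cite{Co1} this is handled directly rather than via an abstract $G_\delta$ argument, but since the statement is only being quoted here this is not a defect in the comparison.
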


\noindent {\bf Convention:} Throughout the paper, all the manifolds will be assumed to be compact, orientable, and strictly mean convex unless
otherwise stated. We will also assume that all the surfaces are orientable.

\section{Stratification of the space of simple closed curves in $\partial M$}

In this section, we study the space of nullhomologous simple closed curves in $\partial M$, where $M$ is a compact, orientable, strictly mean convex $3$-manifold. We
stratify this space with respect to the minimum genus of the absolutely area minimizing surfaces that its curves bound.

 Let $\A$ denote the space of nullhomologous simple closed curves in $\partial
M$, equipped with the $\mathcal{C}^0$ topology.  We give here the precise definition of the neighborhoods defining this topology.
\begin{defn}\label{c0rmk}
Let $\Gamma\in\A$, then there exists a continuous function
\[f: S^1\to\partial M\]
such that $f(S^1)=\Gamma$, where $S^1=\partial \BB^2$ and $\BB^2$ is the unit 2-ball in $\R^2$.

An $\e$-neighborhood of $\Gamma$, $\U_\Gamma(\e)$ in the $\mathcal{C}^0$ topology consists of all $\wt\Gamma\in\A$ for which there exists a continuous function
\[g: S^1\to\partial M\]
such that $g(S^1)=\wt\Gamma$ and $\displaystyle\sup_{x\in S^1}|f(x)-g(x)|<\e$.

Note that, for $\e$ small enough, so that the $\e$-neighborhood of $\Gamma$ in $\partial M$, defined by
\[N_\e(\Gamma)=\{x\in\partial M:\dist(x, \Gamma)<\e\},\]
is topologically an annulus, we have that  $\wt\Gamma\in\mathcal \U_\Gamma(\e)$ is equivalent to  $\wt\Gamma\subset N_\e(\Gamma)$ and
$\wt\Gamma\sim\Gamma$, i.e. $\wt\Gamma$ is homotopic to $\Gamma$.
\end{defn}

We define a relation $\varphi: \A \to \BN$, by
$$\varphi(\Gamma) = \min_\Sigma \genus(\Sigma),$$
where the minimum is taken over all  absolutely area minimizing surfaces $\Sigma$ in  $M$ with $\partial \Sigma = \Gamma$ and naturally
$\genus(\Sigma)$ denotes the genus of a surface $\Sigma$.

Now, define $\A_g = \{ \ \Gamma\in \A \ | \ \varphi(\Gamma)\geq g \ \}$ and $\B_g = \varphi^{-1}(g)= \A_g \setminus\A_{g+1}$. In particular, $\A_g$
denotes the space of simple closed curves in $\partial M$ such that the minimum genus of the absolutely area minimizing surfaces that they bound is
greater than or equal to $g$. $\B_g$ denotes the space of simple closed curves in $\partial M$ such that the minimum genus of the absolutely area
minimizing surfaces that they bound is exactly $g$. Clearly, $\A=\A_0\supset \A_1\supset \A_2 \supset ...\supset \A_n \supset ...$.

\begin{lem}\label{Agopen}
For any $g\geq 0$, $\A_g$ is open in $\A$.
\end{lem}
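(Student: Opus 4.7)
My plan is to prove openness by contradiction. Suppose $\Gamma\in\A_g$ lies in the closure of $\A\setminus\A_g$. Pick a sequence $\Gamma_n\to\Gamma$ in the $\mathcal{C}^0$ topology with $\Gamma_n\notin\A_g$, so that each $\Gamma_n$ bounds an absolutely area minimizing surface $\Sigma_n$ with $\genus(\Sigma_n)=g_n<g$. Pigeonholing, I may assume $g_n\equiv g_0<g$. The goal is to extract a subsequential limit $\Sigma_\infty$ of the $\Sigma_n$ that is itself absolutely area minimizing with $\partial\Sigma_\infty=\Gamma$ and $\genus(\Sigma_\infty)=g_0$, contradicting $\Gamma\in\A_g$.

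The first step is a uniform mass bound. Fix an absolutely area minimizing surface $\Sigma$ bounded by $\Gamma$, which exists by Theorem \ref{absmincur}. Since $\Gamma_n\to\Gamma$ in $\mathcal{C}^0$, for large $n$ the curve $\Gamma_n$ lies in a thin annular neighborhood $N_{\e_n}(\Gamma)\subset\partial M$ with $\e_n\to 0$ and is homotopic there to $\Gamma$, so it cobounds with $\Gamma$ an annular region $A_n$ of small area; then $\Sigma\cup A_n$ is a comparison surface bounded by $\Gamma_n$, giving $|\Sigma_n|\le|\Sigma|+|A_n|$, uniformly bounded. Standard compactness for integral currents then produces a subsequential flat limit $\Sigma_\infty$ with $\partial\Sigma_\infty=\Gamma$.

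Next I would show that $\Sigma_\infty$ is absolutely area minimizing and has multiplicity one. Applying the same annular-bridge construction to any comparator $T$ with $\partial T=\Gamma$ yields $T_n$ with $\partial T_n=\Gamma_n$ and $|T_n|\to|T|$; combining $|\Sigma_n|\le|T_n|$ with lower semicontinuity of mass gives $|\Sigma_\infty|\le|T|$. Taking $T=\Sigma$ moreover forces the equality $|\Sigma_\infty|=\lim|\Sigma_n|=|\Sigma|$, which rules out any extra closed positive-multiplicity component of $\Sigma_\infty$ (such a component would contribute mass without contributing to the boundary). By Theorem \ref{absmincur}, $\Sigma_\infty$ is smooth in the interior of $M$, and applying Theorem \ref{white2c} to the $\Sigma_n$ gives uniform principal curvature bounds on compact subsets of $M\setminus\Gamma$, upgrading the flat convergence to smooth convergence on such sets.

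The hardest step will be comparing the genera, since $\Gamma$ need only be $\mathcal{C}^0$ and so $\Sigma_\infty$ may fail to be smooth at $\Gamma$. I plan to truncate both $\Sigma_\infty$ and $\Sigma_n$ by removing small collar neighborhoods of their respective boundaries, so that the truncated interior pieces are compactly contained in $M\setminus\Gamma$; these pieces are then smoothly close graphically, and hence diffeomorphic, for large $n$. The removed collars are topologically annular because $\Gamma_n$ and $\Gamma$ are simple closed curves, so additivity of the Euler characteristic across annular gluings yields $\genus(\Sigma_\infty)=\genus(\Sigma_n)=g_0<g$. This produces an absolutely area minimizing surface bounded by $\Gamma$ of genus strictly less than $g$, contradicting $\Gamma\in\A_g$.
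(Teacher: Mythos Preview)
Your argument coincides with the paper's up through the smooth interior convergence: same contradiction set-up, same annular comparison surface to bound mass, same Federer--Fleming compactness, same appeal to Theorem~\ref{white2c} to upgrade to smooth convergence on compact subsets of $\overline{M}\setminus\Gamma$ (the paper also invokes the monotonicity formula here to verify the area-ratio hypothesis of Theorem~\ref{white2c}, which you should mention).

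The final genus step, however, is where your proposal has a genuine gap. You assert that the removed collars of both $\Sigma_\infty$ and $\Sigma_n$ are annular ``because $\Gamma_n$ and $\Gamma$ are simple closed curves.'' That is not a justification: a surface with a single boundary circle can carry handles arbitrarily close to the boundary, so whether a given collar is annular depends on the surface and on the truncation level. To run your argument you would need a \emph{single} compact $K$ such that $\Sigma_n\setminus K$ is annular for all large $n$ \emph{and} $\Sigma_\infty\setminus K$ is annular. But since $\Gamma\in\A_g$ forces $\genus(\Sigma_\infty)\ge g>g_0$, any $K$ capturing all of the topology of $\Sigma_\infty$ already has $\genus(\Sigma_\infty\cap K)\ge g$, and then the diffeomorphism $\Sigma_n\cap K\cong\Sigma_\infty\cap K$ would give $\genus(\Sigma_n\cap K)\ge g>\genus(\Sigma_n)$---so the collar $\Sigma_n\setminus K$ \emph{cannot} be annular, and your chain of equalities $\genus(\Sigma_\infty)=\genus(\Sigma_n)=g_0$ breaks.

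The paper avoids this by reversing the direction of the genus comparison. Rather than trying to prove $\genus(\Sigma_\infty)\le g_0$, it uses the already-known fact $\genus(\Sigma_\infty)\ge g$ (from $\Gamma\in\A_g$) to choose a compact $K\subset M$ with $\genus(\Sigma_\infty\cap K)\ge g$; smooth convergence on $K$ together with Gauss--Bonnet then forces $\genus(\Sigma_n\cap K)\ge g$ for large $n$, contradicting $\genus(\Sigma_n)\le g-1$. This requires no control on the collars of $\Sigma_n$ at all, and no assumption that $\Sigma_\infty$ is a topological manifold-with-boundary near the merely $\mathcal{C}^0$ curve $\Gamma$. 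Your outline becomes correct if you replace your last paragraph with this simpler one-sided comparison.
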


\begin{pf} Let  $\Gamma_0\in \A_g$ be a simple closed curve in $\partial M$. Since $\Gamma_0\in
\A_g$, we have $\varphi(\Gamma_0)\geq g$, i.e. if $\Sigma$ is an absolutely area minimizing surface in $M$ with $\partial \Sigma = \Gamma_0$, then
$\genus(\Sigma) \geq g$.

Assume now that the Lemma is not true. Then for any $\e>0$, there exists $\Gamma_\e\in\U_{\Gamma_0}(\e)$, such that $\Gamma_\e\notin\A_g$, where
$\U_{\Gamma_0}(\e)$ is the $\e$-neighborhood of $\Gamma_0$ in the $\C^0$ topology, as in Definition \ref{c0rmk}. Note that for $\e$ small enough,
$\Gamma_\e\sim\Gamma$ (cf. Definition \ref{c0rmk}). Hence, for $\e$ small enough, there exists a region $A_\e$  in $\partial M$ between $\Gamma_\e$
and $\Gamma$, i.e. $\partial A_\e=\Gamma_\e\cup\Gamma$, for which we have $|A_\e|\stackrel{\e\to 0}{\longrightarrow} 0$, where $|\cdot|$ denotes the
area of a surface. This implies that for any absolutely area minimizing surface $\Sigma_\e$, with $\partial\Sigma_\e=\Gamma_\e$, we have
$|\Sigma_\e|\le |\Sigma|+|A_\e|$, where $\Sigma$ is  any absolutely area minimizing surface with boundary equal to $\Gamma_0$. Therefore there exists
a sequence $\{\Gamma_i\}_{i\in\N}\subset\A$, such that $\Gamma_i\stackrel{i\to\infty}{\longrightarrow}\Gamma_0$ in the $\C^0$ topology and such that
for each $i$, there exists an absolutely area minimizing surface $\Sigma_i$, with $\partial\Sigma_i=\Gamma_i$, $\genus(\Sigma_i)\le g-1$ and such
that their areas satisfy $|\Sigma_i|\stackrel{i\to\infty}{\longrightarrow}|\Sigma|$.

Let now $S_i=[\![\Sigma_i]\!]$, be the corresponding integral currents. Then, since the areas $|\Sigma_i|$ are uniformly bounded, by the
Federer-Fleming compactness theorem for integral currents \cite{FeF}, after passing to a subsequence $S_i\to S_0$, in the sense of currents, where
$S_0$ is an integral current, with $\mass(S_0)\le \lim_{i\to\infty} \mass(S_i)=|\Sigma|$; here $\mass$ denotes the mass of the current (see
\cite{Sim}). Hence $\mass(S_0)=|\Sigma_0|$ and thus (using Theorem \ref{absmincur}) $S_0=[\![\Sigma_0]\!]$, where $\Sigma_0$ is an absolutely area
minimizing surface with $\partial \Sigma_0=\Gamma_0$ and since $\Gamma_0\in\A_g$, we have that $\genus(\Sigma_0)\ge g$.

Note now that, since $S_i$ are  absolutely area minimizing, in the above convergence, the corresponding radon measures converge $\mu_{S_i}\to \mu_S$
(for a proof of this see \cite[Theorem 34.5]{Sim}). Furthermore, since each $\Sigma_i$ is a minimal surface, the monotonicity formula holds
\cite{Sim}, i.e. for each $i$, each $x\in \ov M\setminus \Gamma_i$ and for $r$ small enough (so that $\B(x, r)\cap\Gamma_i=\emptyset$),
$r^{-2}|\Sigma_i|$ is an increasing function of $r$. Therefore we can apply the principal curvature bound of White, as given in Theorem \ref{white2c},
to conclude that the convergence $\Sigma_i\to\Sigma_0$ is actually smooth in compact sets of $\overline M\setminus\Gamma_0$. Let now $K$ be a compact
subset of $M$, such that $\Sigma_0\cap K$ has genus $\ge g$. Then the previous convergence, along with the Gauss-Bonnet theorem, implies that the
same should be true for the $\Sigma_i$'s, for $i$ big enough, i.e. that $\genus(\Sigma_i\cap K)\ge g$. But this contradicts the fact that $\genus(
\Sigma_i)\le g-1$.\end{pf}

\subsection{A bridge principle for Absolutely Area Minimizing Surfaces} \label{bridgeprinciplesubsection}
\

\

In this section we define a surgery operation on a simple closed curve $\Gamma$ in $\partial M$, by gluing a ``bridge'' on $\Gamma$. Let $\alpha$ be
a simple path in $\partial M$ such that $\a$ intersects $\Gamma$ transversely and $\Gamma \cap \alpha = \{x,y\}$ are the endpoints of $\alpha$. Let
$\epsilon_0 >0$ be sufficiently small so that for almost every $0<\epsilon<\epsilon_0$, $\partial N_{\epsilon}(\alpha)\cap \Gamma$ consists of
exactly 4 points, namely $\{x^+_{\epsilon}, x^-_{\epsilon}, y^+_{\epsilon}, y^-_{\epsilon}\}$, where $N_{\epsilon}(\alpha)$ is the $\epsilon$
neighborhood of $\alpha$ in $\partial M$, i.e.
\[N_\e(\alpha)=\{x\in\partial M:\dist(x,\alpha)<\e\}.\]
Then, $\Gamma$ divides $N_{\epsilon}(\alpha)$ into three components, which we call $C_x, C_y$ and $S^\epsilon_\alpha$, where $S^\epsilon_\alpha$ is
the component containing $\alpha$ and $C_x, C_y$ are the remaining caps of $N_{\epsilon}(\alpha)$ near $x$ and $y$ respectively (see Figure 1).

For orientation issues, that are explained in Remark \ref{OI}, we further assume that the path $\alpha$ is such that $C_x$ and $C_y$ are in the same
side of $\Gamma$. In particular we will be considering paths $\a$ satisfying the following definition.

\begin{defn}\label{a}
 Let $\Gamma\in\A$ and let $\alpha$ be a simple path in $\partial M$ such that $\a$ intersects $\Gamma$ transversely and $\Gamma
\cap \alpha = \{x,y\}$ are the endpoints of $\alpha$.
We say that $\a$ is a {\em
$\Gamma$-admissible path}  if the following holds:

 If $N_{\epsilon '} (\Gamma)$ is a small annular neighborhood of $\Gamma$ in $\partial M$, and $N_{\epsilon '} (\Gamma) \setminus \Gamma =
H^+\cup H^-$ where $H^+$ is an annulus, say the ``positive side" of $\Gamma$, and $H^-$ is the other annulus, say the ``negative side" of $\Gamma$,
then  either both $C_x$ and $C_y$ intersect $H^+$ or they both intersect $H^-$, where $C_x, C_y$ are as above (see Figure 1).
\end{defn}

\begin{figure}[h]

\relabelbox  {\epsfxsize=5in

\centerline{\epsfbox{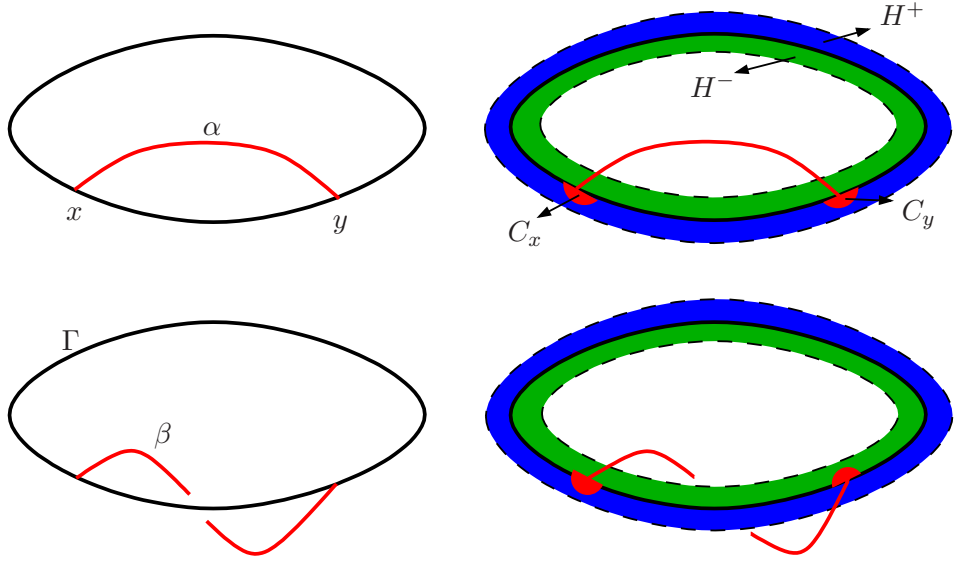}}}

\relabel{1}{$\Gamma$}

\relabel{2}{$x$}

\relabel{3}{$y$}

\relabel{4}{$H^+$}

\relabel{5}{$H^-$}

\relabel{6}{$C_x$}

\relabel{7}{$C_y$}

\relabel{8}{$\beta$}

\relabel{9}{$\alpha$}

\endrelabelbox

\caption{\label{fig:figure2} \small {$\alpha$ is $\Gamma$-admissible as both $C_x$ and $C_y$ belongs to $H^+$. $\beta$ is not $\Gamma$-admissible as
$C_x$ belongs to $H^+$ while $C_y$ belongs to $H^-$. }}

\end{figure}

$S^\epsilon_\alpha$, as constructed above is a tiny strip (``bridge'') of width $\e$, around $\alpha$. For its boundary we have $\partial
S^\epsilon_\alpha = (\alpha^+_\epsilon \cup \alpha^-_\epsilon) \cup (\beta^x_\epsilon \cup \beta^y_\epsilon)$ where $\beta^x_\epsilon$ is the arc
segment of $\Gamma$ between $x^+_\epsilon$ and $x^-_\epsilon$ containing $x$, and similarly $\beta^y_\epsilon$ is the arc segment of $\Gamma$ between
$y^+_\epsilon$ and $y^-_\epsilon$ containing $y$. Also, $\alpha^+_\epsilon$ is a simple path in $\partial M$ connecting $x^+_\epsilon$ to
$y^+_\epsilon$, and $\alpha^-_\epsilon$ is a simple path in $\partial M$ connecting $x^-_\epsilon$ to $y^-_\epsilon$ in $\partial M$ (See Figure 2).
``Gluing'' this strip $S^\e_\a$ on $\Gamma$ results to the following definition.

\begin{defn}\label{wtG}
Let $\Gamma\in\A$ and let $\a$ be a $\Gamma$- admissible path. Then we define a new curve
\[\widehat{\Gamma}^\epsilon_\alpha=\Gamma\sharp S_\a^\e,\]
obtained by gluing on $\Gamma$ a bridge $S^\e_\a$ of width $\e$ around $\a$.
With the above notation $\Gamma\sharp S_\a^\e$ is the curve
\[\widehat{\Gamma}^\epsilon_\alpha =\Gamma\sharp S_\a^\e=(\Gamma \setminus (\beta^x_\epsilon \cup \beta^y_\epsilon)) \cup (\alpha^+_\epsilon \cup \alpha^-_\epsilon).\]
Since $\alpha$ is $\Gamma$-admissible, $\widehat{\Gamma}^\epsilon_\alpha$ is the union of two simple closed curves in $\partial M$, which we will denote by
$\widehat{\Gamma}^{\epsilon,1}_\alpha$ and $\widehat{\Gamma}^{\epsilon,2}_\alpha$ (See Figure 2).

\end{defn}



\begin{figure}[h]

\relabelbox  {\epsfxsize=5in

\centerline{\epsfbox{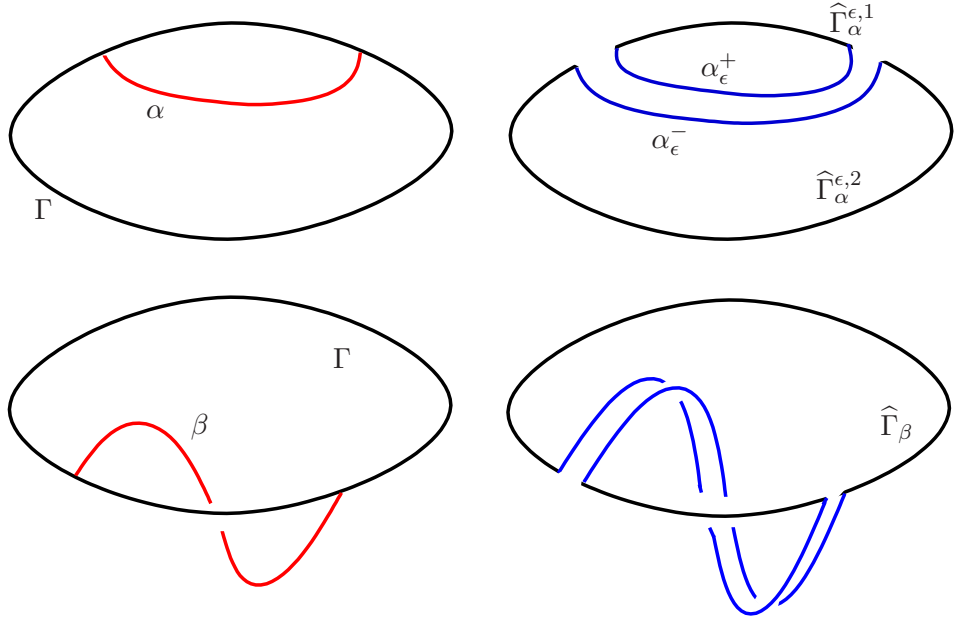}}}

\relabel{1}{$\Gamma$}

\relabel{2}{$\alpha$}

\relabel{3}{$\widehat{\Gamma}^{\epsilon,2}_\alpha$}

\relabel{4}{$\alpha_\epsilon^-$}

\relabel{5}{$\alpha_\epsilon^+$}

\relabel{6}{$\widehat{\Gamma}^{\epsilon,1}_\alpha$}

\relabel{7}{$\Gamma$}

\relabel{8}{$\beta$}

\relabel{9}{$\widehat{\Gamma}_\beta$}

\endrelabelbox

\caption{\label{fig:figure2} \small {If $\alpha$ is $\Gamma$-admissible, after the surgery along $S^\epsilon_\alpha$, the new curve
$\widehat{\Gamma}^\epsilon_\alpha$ has two components, $\widehat{\Gamma}^{\epsilon,1}_\alpha$ and $\widehat{\Gamma}^{\epsilon,2}_\alpha$. When
$\beta$ is not $\Gamma$-admissible, then the resulting curve $\widehat{\Gamma}^\epsilon_\beta$ has only one component.}}

\end{figure}

The above gluing construction is also possible when instead of a single simple closed curve, we have two disjoint curves and $\a$ is a path joining
them. In particular, let $\Gamma_1, \Gamma_2\in \A$ be such that $\Gamma_1\cap\Gamma_2=\emptyset$. Then the above gluing operation goes through in
exactly the same way as described, with $\Gamma$ replaced by $\Gamma_1\cup\Gamma_2$. In this case we have no orientation issues but we want $\a$ to
be a path ``connecting'' the two curves. In particular we will be considering paths $\a$ satisfying the following definition.

\begin{defn}\label{a2}
 Let $\Gamma_1, \Gamma_2\in \A$ be such that $\Gamma_1\cap\Gamma_2=\emptyset$, and let $\alpha$ be a simple path in $\partial M$ such that $\a$ intersects
 $\Gamma_1$ and $\Gamma_2$ transversely and $(\Gamma_1\cup\Gamma_2) \cap \alpha = \{x,y\}$ are the endpoints of $\alpha$. We say that $\a$ is a path {\em
 connecting } $\Gamma_1$ and $\Gamma_2$  if $x\in\Gamma_1$ and $y\in \Gamma_2$.
\end{defn}

As in the case of a simple closed curve we can consider $S_\a^\e$, a tiny strip (``bridge'') of width $\e$ around $\a$ with $\partial
S^\e_\a=(\a^+_\e\cup\a^-_\e) \cup(\beta_\e^x\cup\beta_\e^y)$, where $S_\a^\e, a^+_\e, \a^-_\e, \beta_\e^x$ and $\beta_\e^y$ are defined exactly as
before. Connecting $\Gamma_1 $ and $\Gamma_2$ via this stip $S^\e_\a$  results to the following definition.

\begin{defn}\label{wtG2}
Let $\Gamma_1, \Gamma_2\in\A$ be such that $\Gamma_1\cap\Gamma_2=\emptyset$ and let $\a$ be a path connecting them. Then we define a new curve
\[\widehat{\Gamma}^\epsilon_\alpha=\Gamma\sharp S_\a^\e,\]
obtained by gluing on $\Gamma$ a bridge $S_\a^\e$ of width $\e$ around $\a$,
where $\Gamma=\Gamma_1\cup\Gamma_2$.
With the previous notation $\Gamma\sharp S_\a^\e$ is the curve
\[\widehat{\Gamma}^\epsilon_\alpha =\Gamma\sharp S_\a^\e=(\Gamma \setminus (\beta^x_\epsilon \cup \beta^y_\epsilon)) \cup (\alpha^+_\epsilon \cup \alpha^-_\epsilon).\]
Since $\alpha$ is a path connecting $\Gamma_1$ and $\Gamma_2$, $\widehat{\Gamma}^\epsilon_\alpha$ is a simple closed curve.
\end{defn}

\begin{rmk}\label{smoothglue}
We remark here that if the curve $\Gamma$ is smooth around $\Gamma\cap\a$, where $\a$ is a $\Gamma$-admissible path, then we can consider gluing the
bridge $S^\e_\a$ smoothly along $\Gamma$, so that the resulting curve $\widehat{\Gamma}_\a^\e=\widehat{\Gamma}_\a^{\e,1}\cup
\widehat{\Gamma}_\a^{\e,2}$, as in Definition \ref{wtG}, is smooth around $x^\pm, y^\pm$.

Similarly, if $\Gamma_1$ and  $\Gamma_2$ are smooth around $(\Gamma_1\cup\Gamma_2)\cap\a$, where $\a$ is a path connecting $\Gamma_1$ and $\Gamma_2$,
then we  consider gluing the bridge $S^\e_\a$ smoothly along $\Gamma_1$ and $\Gamma_2$, so that the resulting curve $\widehat{\Gamma}_\a^\e$, as in
Definition \ref{wtG2}, is smooth around $x^\pm, y^\pm$.

In the rest of the paper and as long as these smoothness assumptions are satisfied we will always consider constructing the curve
$\widehat{\Gamma}_\a^\e$ to be smooth around $\a$.

\end{rmk}

\begin{lem} {\bf (A bridge principle for absolutely area minimizing surfaces)}\label{bridge principle}

\noindent Assume that either
\begin{itemize}
\item[(a)] $\Gamma\in\A$ and $\alpha$ is a $\Gamma$-admissible simple path in $\partial M$ (as in \emph{Definition \ref{a}}) or
\item[(b)] $\Gamma=\Gamma_1\cup\Gamma_2$, where $\Gamma_1,\Gamma_2\in\A$ are such that $\Gamma_1\cap\Gamma_2=\emptyset$ and $\alpha$ is a simple path in
$\partial M$ connecting them (as in \emph{Definition \ref{a2}}).
\end{itemize}

Let $\widehat{\Gamma}^\epsilon_\alpha=\Gamma\sharp S^\e_\a$ be the curve obtained by attaching on $\Gamma$ a strip of width $\e$ around $\a$  (see \emph{Definition \ref{wtG}} and
\emph{\ref{wtG2}} for cases \emph{(a)} and \emph{(b)} respectively). Then, for any $\eta>0$ there exists $\epsilon_\alpha>0$ such that for almost every
$0<\epsilon<\epsilon_\alpha$, any absolutely area minimizing surface in $M$, $\Sigma^\e_\alpha$, with $\partial
\Sigma^\e_\alpha=\widehat{\Gamma}^\epsilon_\alpha$ is $\eta-$close in Hausdorff distance and in area, to the surface obtained by attaching an
$\e$-strip around $\alpha$ to one of the absolutely area minimizing surface bounded by $\Gamma$, i.e.
\[\sup_{x\in \Sigma^\e_\alpha}\dist(x, \Sigma\cup S^\e_\alpha)<\eta\]
and
\[\left||\Sigma_\a^\e|-|\Sigma|\right|<\eta\]
for some absolutely area minimizing surfaces $\Sigma$, with $\partial \Sigma=\Gamma$.

\end{lem}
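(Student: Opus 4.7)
The plan is to sandwich $|\Sigma^\epsilon_\alpha|$ between $|\Sigma|\pm|S^\epsilon_\alpha|$ and then extract Hausdorff control from a Federer--Fleming/White compactness argument modeled on the proof of Lemma~\ref{Agopen}. Write $|\Sigma|$ for the common area of any absolutely area minimizing surface bounded by $\Gamma$ (case (a)) or by $\Gamma_1\cup\Gamma_2$ (case (b)). The upper bound follows by using $\Sigma\cup S^\epsilon_\alpha$ as a competitor for $\Sigma^\epsilon_\alpha$: by construction the boundary arcs of $\Sigma$ and $S^\epsilon_\alpha$ cancel along $\beta^x_\epsilon\cup\beta^y_\epsilon$, leaving exactly $\widehat\Gamma^\epsilon_\alpha$, so $|\Sigma^\epsilon_\alpha|\le|\Sigma|+|S^\epsilon_\alpha|$. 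Symmetrically, $\Sigma^\epsilon_\alpha\cup S^\epsilon_\alpha$ is an integral current with boundary $\Gamma$ (resp.\ $\Gamma_1\cup\Gamma_2$), giving $|\Sigma|\le|\Sigma^\epsilon_\alpha|+|S^\epsilon_\alpha|$. Since $|S^\epsilon_\alpha|\le C\epsilon\cdot\length(\alpha)\to 0$, this establishes the area part of the claim for $\epsilon_\alpha$ small enough.

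For the Hausdorff estimate I would argue by contradiction: suppose there exist $\eta>0$ and a sequence $\epsilon_i\downarrow 0$ such that $\Sigma_i:=\Sigma^{\epsilon_i}_\alpha$ is absolutely area minimizing with $\partial\Sigma_i=\widehat\Gamma^{\epsilon_i}_\alpha$, but $\sup_{x\in\Sigma_i}\dist(x,\Sigma\cup S^{\epsilon_i}_\alpha)\ge\eta$ for every absolutely area minimizing $\Sigma$ bounded by $\Gamma$. The masses $\mass([\![\Sigma_i]\!])$ are uniformly bounded by the first paragraph, so Federer--Fleming yields a subsequential current limit $T$ with $\partial T=[\![\Gamma]\!]$ (since $\widehat\Gamma^{\epsilon_i}_\alpha\to\Gamma$ in $\C^0$) and $\mass(T)\le|\Sigma|$ by lower semicontinuity. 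Combined with the area lower bound this forces $\mass(T)=|\Sigma|$, so Theorem~\ref{absmincur} identifies $T=[\![\Sigma_0]\!]$ for some absolutely area minimizing $\Sigma_0$ with $\partial\Sigma_0=\Gamma$.

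To finish I repeat the smooth-convergence step from Lemma~\ref{Agopen}: the associated radon measures converge (\cite[Thm.~34.5]{Sim}), the monotonicity formula produces uniform area-ratio bounds, and Theorem~\ref{white2c} then upgrades the current convergence $\Sigma_i\to\Sigma_0$ to smooth convergence on compact subsets of $\ov M\setminus(\Gamma\cup\alpha)$. Hence for $i$ large, any point of $\Sigma_i$ at distance $\ge\eta/2$ from $\Gamma\cup\alpha$ lies within $\eta/4$ of $\Sigma_0$, while any point within $\eta/2$ of $\Gamma\cup\alpha$ lies within $\eta$ of $\Sigma_0\cup S^{\epsilon_i}_\alpha$, since $\Gamma=\partial\Sigma_0\subset\Sigma_0$ and $\alpha\subset S^{\epsilon_i}_\alpha$. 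This contradicts the choice of $\eta$. The main obstacle is applying Theorem~\ref{white2c}(b) uniformly along the varying boundaries $\widehat\Gamma^{\epsilon_i}_\alpha$: by Remark~\ref{smoothglue} I may take these curves smooth, and away from the two fixed points $x,y$ and the path $\alpha$ they coincide with $\Gamma$, so the boundary curvature bound is uniform there; around $\{x,y\}$ the local geometry of the bridge depends on $\epsilon_i$, and I plan to absorb these regions into the error by excluding a small ball around each of $x,y$, whose contribution to the Hausdorff distance tends to zero as $\epsilon_i\downarrow 0$ because both points lie on $\Gamma\subset\Sigma_0$.
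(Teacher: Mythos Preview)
Your argument is correct, and the overall architecture (mass bound via a competitor, Federer--Fleming compactness, contradiction) matches the paper's, but two steps are handled differently and it is worth recording the comparison.

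For the area estimate you give a direct two-sided sandwich, using $\Sigma\cup S^\epsilon_\alpha$ and $\Sigma^\epsilon_\alpha\cup S^\epsilon_\alpha$ as competitors in the two directions. The paper instead obtains the area conclusion \emph{a posteriori} from the measure convergence $\mu_{T_i}\to\mu_T$ (i.e.\ \cite[Theorem~34.5]{Sim}). Your route is more elementary and avoids any appeal to compactness for this half of the lemma.

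For the Hausdorff estimate the paper does \emph{not} pass through Theorem~\ref{white2c} or smooth convergence at all: it picks bad points $x_i\in\Sigma_i$ with $\dist(x_i,\Sigma\cup S^{\epsilon_i}_\alpha)\ge\eta$, passes to $x_i\to x_0$, and then uses only the monotonicity formula at $x_i$ together with the Radon-measure convergence $\mu_{T_i}\to\mu_T$ to force $\spt T\cap \B(x_i,\eta)\neq\emptyset$, a contradiction. Your detour through White's curvature bounds works, but it is heavier than what is needed here; in the paper, the smooth convergence you describe is only established afterward in Remark~\ref{C2convrmk}, for use in the later genus arguments. In particular, the boundary-regularity obstacle you raise in your last paragraph is a non-issue: on the set $\{x:\dist(x,\Gamma\cup\alpha)\ge\eta/2\}$ you are at distance $\ge\eta/4$ from $\partial\Sigma_i=\widehat\Gamma^{\epsilon_i}_\alpha$ for $i$ large, so only the interior case of Theorem~\ref{white2c} is ever invoked, and no uniform $C^{2,\alpha}$ control of $\widehat\Gamma^{\epsilon_i}_\alpha$ near $x,y$ is required.
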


\begin{pf}
We prove this by contradiction. Assume that the lemma is not true, then there exist $\Gamma, \a$ satisfying either the assumption (a) or the
assumption (b) of the lemma, and  some $\eta>0$, such that the following holds: For any $\e_\a>0$, there exists $\e<\e_\a$ and an absolutely area minimizing
surface $\Sigma^\e_\alpha$ in $M$, with $\partial \Sigma^\e_\alpha=\widehat{\Gamma}^\epsilon_\alpha$, such that for any absolutely area minimizing
surface $\Sigma$, with $\partial \Sigma=\Gamma$ either
\begin{equation}\label{supcontr}
\sup_{x\in \Sigma^\e_\alpha}\dist(x, \Sigma\cup S^\e_\alpha)\ge\eta
\end{equation}
or
\begin{equation}\label{supcontr2}
\left||\Sigma_\a^\e|-|\Sigma|\right|\ge\eta
\end{equation}
Hence there exists a sequence $\e_i\downarrow 0$, and surfaces
$\Sigma_\alpha^{\e_i}$ as above such that for any absolutely area minimizing surface $\Sigma$,
with $\partial \Sigma=\Gamma$,  either \eqref{supcontr} or \eqref{supcontr2} holds, with $\e$ replaced by $\e_i$.

Let $T_i=[\![\Sigma_\a^{\e_i}]\!]$ be the corresponding currents. Note that for any absolutely area minimizing current $T_0$, with $\partial T_0=[\![\Gamma]\!]$, we have that
\[\partial T_i=\partial(T_0+[\![S^{\e_i}_\a]\!])\]
and thus
\[\underline{\underline M}(T_i)\le \underline{\underline M}(T_0+[\![S^{\e_i}_\a]\!])\le \underline{\underline M}(T_0)+C\e_i,\]
where $\underline{\underline M}$ denotes the mass and $C$ is a constant that depends on $\a$ (independent of $i$). Hence, the currents $T_i$ have
uniformly bounded masses, and thus by the Federer-Fleming compactness theorem \cite{FeF}, after passing to a subsequence, $T_i\to T$, in the sense of
currents, where $T$ is an integral current. Furthermore (cf. \cite[Theorem 34.5]{Sim}) $T$ is absolutely area minimizing, $\partial T=\Gamma$ and the
corresponding Radon measures also converge $\mu_{T_i}\to \mu_T$. Therefore, by Theorem \ref{absmincur}, $T=[\![\Sigma_0]\!]$, where $\Sigma_0$ is an absolutely area minimizing surface and it is also
smooth and embedded away from $\Gamma$. The measure convergence implies that

\[\left||\Sigma_\a^{\e_i}|-|\Sigma_0|\right|\stackrel{i\to\infty}{\longrightarrow}0\]
and so \eqref{supcontr2} cannot hold. Therefore we assume that for every $i$, \eqref{supcontr} holds.
 In particular, for each $i$, there exists $x_i\in \Sigma_\a^{\e_i}$ such that
\[\dist(x_i, \Sigma\cup S^{\e_i}_\alpha)\ge\eta\]
for any absolutely area minimizing surface $\Sigma$, with $\partial \Sigma=\Gamma$.

By passing to a further subsequence if necessary we also have that
\[x_i\to x_0\]
for some $x_0\in \ov M$. Note that by the assumption on $x_i$, we have that $\dist(x_i,\Sigma_0\cup\alpha)\ge \eta$ and hence we also have that $\dist(x_0,
\Sigma_0\cup\alpha)\ge \eta$.

By the above convergence, there exists $i_0$, such that $\forall i\ge i_0$ we have
\[\B(x_i,{\eta/4})\subset \B(x_0,{\eta/2})\subset \B(x_i, \eta),\]
where $\B(x,\rho)$ denotes a geodesic ball of radius $\rho$ and centered at $x$, in $M$.
Hence, using the monotonicity formula (cf. \cite{Sim}) and the fact that $x_i\in\spt T_i\setminus \spt\partial T_i$
\[\mu_{T_i}(\B(x_0, {\eta/2}))\ge\mu_{T_i}(\B(x_i,{\eta/4}))\ge \pi\left(\frac\eta 4\right)^2.\]
Using now the measure convergence, we have that
\[\mu_T(\B(x_0,{\eta/2}))=\lim_i\mu_{T_i}(\B(x_0,{\eta/2}))\ge \pi\left(\frac\eta 4\right)^2\]
and therefore
\[\mu_T(\B(x_i,{\eta}))\ge\mu_T(\B(x_0,{\eta/2}))\ge  \pi\left(\frac\eta 4\right)^2\]
which implies that
\[\spt T\cap \B(x_i,\eta)\ne \emptyset\Rightarrow \dist (x_i, \Sigma_0\cup\alpha)<\eta.\]
This contradicts the choice of $x_i$ and thus \eqref{supcontr} cannot hold for every $i$. This finishes the proof of the lemma.
\end{pf}

\begin{rmk}\label{C2convrmk} Let $\{\e_i\}_{i\in\N}$, be a sequence with $\e_i\downarrow 0$ and for each $i$ let $\Sigma_\a^{\e_i}$ be an area minimizing
surface with $\partial \Sigma_\a^{\e_i}=\widehat\Gamma_{\a}^{\e_i}$ as in Theorem \ref{bridge principle}. Then the proof of Theorem \ref{bridge
principle} shows that after passing to a subsequence, $\Sigma_\a^{\e_i}\to\Sigma$, where $\Sigma$ is an absolutely area minimizing surface with
$\partial\Sigma=\Gamma$ and the convergence is with respect to Hausdorff distance and with respect to measure. Therefore we can argue as in the proof
of Lemma \ref{Agopen}, using White's curvature bound, Theorem \ref{white2c}  (which is applicable because of the measure convergence and the monotonicity formula
\cite{Sim}), to conclude that this convergence is smooth in compact sets of $\ov M\setminus (\Gamma\cup\a)$ and in fact it is also smooth around
points of $\Gamma\setminus\a$, where $\Gamma$ is smooth. In particular,  if $\Gamma$ is smooth, then  it is a smooth convergence in compact sets of
$\ov M\setminus \a$.
\end{rmk}

\begin{rmk} Note that the result of Lemma \ref{bridge principle} is not an extension of the classical bridge principle to absolutely area minimizing surfaces.
In the classical bridge principle, one starts with a fixed stable minimal surface $T$ with boundary $\Gamma$, and
after adding a thin bridge $S^\epsilon_\alpha$ to $\Gamma$, one gets a new surface $T \sharp S^\epsilon_\alpha$ with boundary
$\widehat{\Gamma}^\epsilon_\alpha$ which is close to the original surface $T$ \cite{MY3}, \cite{Wh4}.

In our case, if $\Gamma$ bounds more than one absolutely area minimizing surface, say $\{\Sigma_1, \Sigma_2,..., \Sigma_k\}$, then the absolutely
area minimizing surface $\Sigma^\epsilon_\alpha$ bounded by $\widehat{\Gamma}^\epsilon_\alpha$ is close to $\Sigma_{i_0} \cup S^\epsilon_\alpha$ for
some $1\leq i_0 \leq k$. Therefore, one may not get a modified absolutely area minimizing surface $\Sigma_\a^\e$ which is close to $\Sigma_i$ for
each $i$. The absolutely area minimizing surface $\Sigma_{i_0}$ which is close to $\Sigma^\epsilon_\alpha$ depends on the choice of the simple path
$\alpha$ and then $\Sigma_{i_0}$ is the limit of the sequence of absolutely area minimizing surfaces $\{\Sigma^{\epsilon_i}_\alpha\}$ in $M$ as it
appears in the proof of Theorem \ref{bridge principle}. At this point, we conjecture that $\Sigma_{i_0}$ must be one of the two canonical absolutely
area minimizing surfaces $\Sigma^+,\Sigma^-$ bounding $\Gamma$ as in Lemma 4.2 of \cite{Co1}, since they are the extremal absolutely area minimizing
surfaces with boundary $\Gamma$.

On the other hand, if one relax the condition of area minimizing to just minimal for the new surface, then the classical bridge principle for extreme
curves is valid  by using the techniques in Section 4. There, we show how one can apply \cite{Wh4} (see Theorem \ref{white4} below) to get a {\em
minimal surface} $S$ with $\partial S = \widehat{\Gamma}^\epsilon_\alpha$, which is close to $\Sigma_{i_0} \cup S^\epsilon_\alpha$ for a specified area
minimizing surface $\Sigma_{i_0}$ with $\partial \Sigma_{i_0} = \Gamma$.
\end{rmk}

\begin{rmk}\label{OI} (Orientation Issues) If $\Sigma$ is an oriented surface whose boundary is a simple closed curve $\Gamma$, gluing a strip $S\sim I\times
I$ ``trivially" along the boundary $\Gamma$ will give another oriented surface whose boundary consists of $2$ simple closed curves. However, if the
strip $S$ glued to $\Sigma$ along $\Gamma$ has a ``twist", then the new surface will not be oriented anymore, and the boundary will be a simple
closed curve again. The $\Gamma$-admissibility condition on $\alpha$ expresses this difference. Therefore, the surgery along a $\Gamma$-admissible
path $\alpha$ gives an oriented surface with $2$ boundary components, whereas the surgery along a non-$\Gamma$-admissible path $\beta$ would be
gluing a twisted strip along $\Gamma$ to $\Sigma$, and give a nonorientable surface whose boundary is a simple closed curve again (See Figure 3).
\end{rmk}

\begin{figure}[h]

\relabelbox  {\epsfxsize=5in

\centerline{\epsfbox{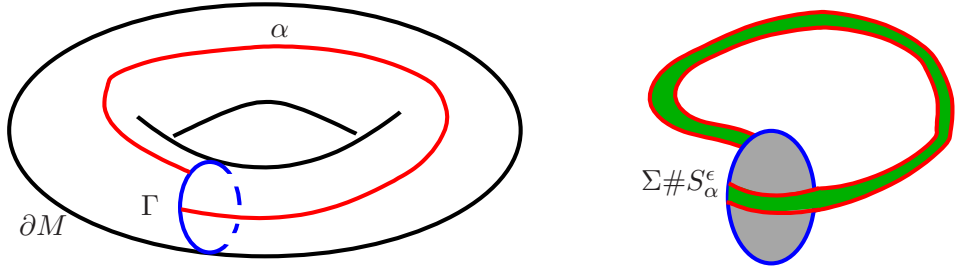}}}

\relabel{1}{$\partial M$}

\relabel{2}{$\Gamma$}

\relabel{3}{$\alpha$}

\relabel{4}{$\Sigma \# S^\epsilon_\alpha$}

\endrelabelbox

\caption{\label{fig:figure2} \small {If $\alpha$ is not $\Gamma$-admissible, when we attach the strip $S^\epsilon_\alpha$ to the oriented surface
$\Sigma$ in $M$ with boundary $\Gamma$, we get a nonorientable surface. In this picture, $\Sigma \# S^\epsilon_\alpha$ is a Mobius band. }}

\end{figure}

Next we show in Corollary~\ref{genuscor} how one can relate the genus of the surfaces $\Sigma^\e_\a$ to that of $\Sigma$, where $\Sigma^\e_\a,
\Sigma$ are as in Lemma \ref{bridge principle}. For the proof, we need White's bridge principle for stable minimal surfaces \cite{Wh4}. For the
convenience of the reader, we state it here in the form that we will apply it, using our notation so far.

\begin{thm}\label{white4}\cite{Wh4}
Let $\Gamma, \a$ satisfy assumption \emph{(a)} or \emph{(b)} of \emph{Theorem \ref{bridge principle}}. We further assume that both $\Gamma$ and $\a$ are
smooth. Let $\Sigma$ be a smooth and strictly stable surface with $\partial\Sigma=\Gamma$. Let also $\Sigma_n$ be a sequence of minimal surfaces
converging smoothly to $\Sigma$ and such that for $\Gamma_n:=\partial\Sigma_n$, we have that $W\cap \Gamma_n= W\cap\Gamma$, for an open set $W$
containing $\a$. Then:
\begin{itemize}
\item[i.] There is an open set  $U$ containing $\Sigma\cup\a$ such that $\Sigma$ is the unique area minimizing surface in $\ov{U}$.
\item[ii.] If for a sequence $\e_n\downarrow 0$, $T_n$ minimizes area among all surfaces in $\ov{U}$ with boundary ${\widehat\Gamma^{\e_n}_{n\a}}$,
where ${\widehat\Gamma^{\e_n}_{n\a}}$ are the smooth curves given in \emph{Definition \ref{wtG}} or \emph{\ref{wtG2}} (see also \emph{Remark \ref{smoothglue}}),
using now $\Gamma_n$ instead of $\Gamma$, then
\begin{itemize}
\item$|T_n|\stackrel{n\to\infty}{\longrightarrow}|\Sigma|$.
\item $T_n$ converges smoothly to $\Sigma$ on compact subsets of $\ov M\setminus \a$.
\item For sufficiently large $n$, $T_n$ and $\Sigma_n\cup S_\a^\e$ are diffeomorphic.
\item  For sufficiently large $n$, $T_n$ is unique and strictly stable.
\end{itemize}
\end{itemize}
\end{thm}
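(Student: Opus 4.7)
The plan is to exploit the strict stability of $\Sigma$ as a rigid local minimizer, then control the modified surfaces $T_n$ via area comparison and curvature estimates. For part (i), strict stability means the Jacobi operator on $\Sigma$ with Dirichlet boundary conditions on $\Gamma$ has strictly positive first eigenvalue, so I would construct a foliation of a tubular neighborhood of $\Sigma$ by surfaces of strictly signed mean curvature (solving a prescribed mean curvature equation for small normal graphs via the implicit function theorem). This foliation calibrates $\Sigma$: the maximum principle forbids any other minimal surface from a small neighborhood $U \supset \Sigma$, and enlarging $U$ slightly to engulf $\alpha$ costs negligible area, so $\Sigma$ remains the unique area minimizer in $\overline{U}$.

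For part (ii), the test competitors $\Sigma_n \sharp S_\alpha^{\epsilon_n}$ lie in $\overline{U}$ for large $n$ (since $\Sigma_n \to \Sigma$ smoothly and the strip is thin), they carry the correct boundary, and their areas converge to $|\Sigma|$. Hence $|T_n| \le |\Sigma_n \sharp S_\alpha^{\epsilon_n}| \to |\Sigma|$, and combined with lower semicontinuity of mass under current convergence together with the uniqueness from part (i), any subsequential limit of the currents $[\![T_n]\!]$ must equal $[\![\Sigma]\!]$, giving the first bullet. Smooth convergence on compact subsets of $\overline{M} \setminus \alpha$ then follows from Schoen-type interior curvature estimates for stable minimal surfaces, combined with the boundary curvature bounds provided by Theorem \ref{white2c} along the smooth part of $\partial T_n$, using uniform upper area density bounds from the monotonicity formula.

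The topological and stability bullets are more delicate. For the diffeomorphism claim, the smooth convergence away from $\alpha$, together with the thinness of the strip, forces $T_n$ near $\alpha$ to be a graph over $\Sigma \cup S_\alpha^{\epsilon_n}$; identifying the topology then reduces to a matching argument between the graphical piece over $\Sigma_n$ and a catenoidal-type piece spanning the bridge. Strict stability of $T_n$ is obtained by a perturbation argument: the first Jacobi eigenvalue is continuous under smooth convergence on compacta, and a cutoff/capacity argument shows that the thin bridge region, having small area and controlled geometry, cannot support a negative eigenfunction, so strict stability transfers from $\Sigma$ to $T_n$ for large $n$. Uniqueness of $T_n$ in $\overline{U}$ then follows from its strict stability exactly as in (i), via a new calibrating foliation around $T_n$. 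The main obstacle, in my view, is the analysis near the degenerating bridge: smooth convergence fails there by construction, so one needs a carefully rescaled catenoidal model plus barriers to pin down both the topology and the eigenvalue behavior in a manner uniform in $n$.
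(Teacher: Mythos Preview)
This theorem is not proved in the paper at all: it is stated with the citation \cite{Wh4} and quoted verbatim as White's bridge principle for stable minimal surfaces, then used as a black box in the proof of Lemma~\ref{genuslem}. There is therefore no ``paper's own proof'' to compare your proposal against; the authors simply import the result.

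That said, your outline is a reasonable sketch of the strategy in \cite{Wh4}. The foliation-by-signed-mean-curvature-leaves construction for part~(i) is exactly how White obtains the calibration and hence local uniqueness (and, as the paper notes in Remark~\ref{rmkonwhite4}, White does this without smoothness assumptions on $\Gamma$). For part~(ii), your compactness-plus-curvature-estimate route to the first two bullets is standard and correct. You are right that the last two bullets are the delicate part: White's actual argument for the diffeomorphism and strict stability near the bridge is more involved than a ``graph over $\Sigma\cup S_\alpha^{\epsilon_n}$'' heuristic, and your capacity/cutoff sketch for the eigenvalue is plausible but would need the precise rescaling analysis you allude to at the end. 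If you intend to reconstruct a full proof rather than cite it, that bridge-region analysis is where the real work lies; otherwise, citing \cite{Wh4} as the paper does is the appropriate move.
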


\begin{rmk}\label{rmkonwhite4}
In \cite{Wh4}, part (i) of the above theorem is actually proven without any assumptions on the regularity of $\Gamma$ and furthermore the Theorem
holds, without necessarily $\Gamma$ lying  in the boundary of $M$. Also, in \cite{Wh4} it is actually proven (and we will need it later) that Theorem \ref{white4}
 still holds if $\Sigma_0$ is an immersed surface. Note also that although $\Gamma$  might not lie in $\partial M$, as long as $\Gamma\cap
W\subset\partial M$ and thus also $\Gamma_n\cap W\subset\partial M$, the construction of the curves ${\widehat\Gamma^{\e_n}_{n\a}}$ given in
Definitions \ref{wtG} and \ref{wtG2} still makes sense.
\end{rmk}

\begin{lem}\label{genuslem}
Let $\Gamma, \a$ satisfy assumption \emph{(a)} or \emph{(b)} of \emph{Theorem \ref{bridge principle}}. We further assume that $\a$ is smooth,
$\Gamma$ is  smooth in an open set $W$ containing $\a$ and for any absolutely area minimizing surface $\Sigma$ with $\partial\Sigma=\Gamma$,
$\genus(\Sigma)<\infty$.  Then there exists $\e_\a>0$ such that for almost every $0<\e<\e_a$ and for any absolutely area minimizing surface
$\Sigma_\a^{\e}$, with $\partial\Sigma_\a^{\e}=\widehat\Gamma_\a^\e$, where $\widehat\Gamma_\a^\e$ is as in \emph{Definition \ref{wtG}} or \emph{\ref{wtG2}} in
case assumption \emph{(a)} or \emph{(b)} is satisfied respectively,   the following holds:

\begin{itemize}
\item[(i)] there exists  a surface  $\Sigma_{0\a}^{\e}\subset\Sigma_\a^{\e}$ such that
\[\chi(\Sigma_{0\a}^{\e})=\chi(\Sigma\cup\alpha),\]
for some absolutely area minimizing surface $\Sigma$ with $\partial\Sigma=\Gamma$ and where $\chi$ denotes the Euler characteristic, and furthermore
\begin{itemize}
\item under assumption \emph{(a)}, $\Sigma_{0\a}^{\e}$ has two boundary components and
\item under assumption \emph{(b)}, $\Sigma_{0\a}^{\e}$ has a unique boundary component.
\end{itemize}
\item[(ii)] In particular, if $\Gamma$ is smooth then the surfaces $\Sigma_{0\a}^{\e}$ in (ii) can be taken to be equal to $\Sigma_{\a}^{\e}$ and thus we have that
\[\chi (\Sigma_\a^{\e})=\chi(\Sigma\cup\a).\]
\end{itemize}
\end{lem}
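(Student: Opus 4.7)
The plan is to combine the convergence from Lemma~\ref{bridge principle} with its smooth refinement in Remark~\ref{C2convrmk} to obtain a topological decomposition of $\Sigma_\a^\e$ relative to $\Sigma\cup S_\a^\e$. Lemma~\ref{bridge principle} gives, for $\e$ small, an absolutely area minimizing $\Sigma$ with $\partial\Sigma=\Gamma$ such that $\Sigma_\a^\e$ is Hausdorff-close and area-close to $\Sigma\cup S_\a^\e$, and Remark~\ref{C2convrmk} upgrades this convergence to smooth convergence on compact subsets of $\ov M\setminus\a$ that avoid the non-smooth locus $\Gamma^*\subset\Gamma\setminus W$.

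Fix a small tubular neighborhood $N$ of $\a$ in $\ov M$ with $N\cap\partial M\subset W$, and a small neighborhood $V$ of $\Gamma^*$. On compact subsets of $\ov M\setminus(N\cup V)$ the smooth convergence yields, for $\e$ small, a diffeomorphism between the corresponding piece of $\Sigma_\a^\e$ and $\Sigma$ with two small disks removed near $x$ and $y$. Inside $N$, I would apply White's curvature bound (Theorem~\ref{white2c})---valid thanks to the density bound from the monotonicity formula and the smoothness of $\widehat\Gamma_\a^\e$ near $\a$ (Remark~\ref{smoothglue})---combined with the Hausdorff closeness to the thin strip $S_\a^\e$ to show that the relevant component of $\Sigma_\a^\e\cap N$ is a normal graph over $S_\a^\e$, and therefore topologically a disk.

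Define $\Sigma_{0\a}^\e\subset\Sigma_\a^\e$ as the union of the diffeomorphic copy of $\Sigma$ (minus two small disks near $x$ and $y$) outside $N$ with this bridge-disk inside $N$. Inclusion-exclusion then yields
\[\chi(\Sigma_{0\a}^\e)=(\chi(\Sigma)-2)+1-0=\chi(\Sigma)-1,\]
which matches $\chi(\Sigma\cup\a)=\chi(\Sigma)+\chi(\a)-\chi(\Sigma\cap\a)=\chi(\Sigma)-1$. The boundary-component count (two in case~(a), one in case~(b)) follows by tracking how the uncancelled boundary arcs of the diffeomorphic copy reconnect with $\a_\e^\pm$, reproducing the curves $\widehat\Gamma_\a^{\e,1}\cup\widehat\Gamma_\a^{\e,2}$ of Definition~\ref{wtG} and the curve $\widehat\Gamma_\a^\e$ of Definition~\ref{wtG2}, respectively. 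For part~(ii), $\Gamma^*=\emptyset$, so the smooth convergence extends to all of $\ov M\setminus N$, and the area convergence $|\Sigma_\a^\e|\to|\Sigma|$ combined with the monotonicity-formula lower density bound rules out any extra components of $\Sigma_\a^\e$, forcing $\Sigma_{0\a}^\e=\Sigma_\a^\e$.

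The main difficulty lies in the topological analysis of $\Sigma_\a^\e\cap N$: ruling out multi-sheeted graphs or higher-genus degenerations collapsing onto $S_\a^\e$ as $\e\to 0$. This is resolved by noting that the density at interior points of $S_\a^\e$, inherited from the measure convergence $\mu_{T_i}\to\mu_T$ and the monotonicity formula, is $1$, which together with the boundary regularity along the smooth arcs $\a_\e^\pm$ forces a single-sheeted smooth disk structure on the bridge piece.
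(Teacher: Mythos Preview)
Your approach diverges from the paper's at the crucial step: controlling the topology of $\Sigma_\a^\e$ near the bridge. The paper does \emph{not} attempt a direct geometric analysis inside a tube around $\a$. Instead it passes to a smooth, \emph{strictly stable} proper subsurface $\Sigma_0\subsetneq\Sigma$ (with $\partial\Sigma_0=\Gamma_0$ smooth, $\Gamma_0=\Gamma$ on $W$, and $\genus(\Sigma_0)=\genus(\Sigma)$), and then invokes White's bridge principle (Theorem~\ref{white4}). That result, applied to the strictly stable $\Sigma_0$ and to curves $\widehat\Gamma_{0\a}^{\e_i}\subset\Sigma_\a^{\e_i}$ converging to $\Gamma_0$, yields directly that $\Sigma_{0\a}^{\e_i}$ is \emph{diffeomorphic} to $\Sigma_i\cup S_\a^{\e_i}\sim\Sigma_0\cup\a$ for large $i$, from which the Euler-characteristic identity is immediate. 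The strict stability of $\Sigma_0$ (obtained precisely by taking a proper subsurface of the absolutely minimizing $\Sigma$) is what makes Theorem~\ref{white4} applicable; your argument never invokes strict stability.

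You correctly identify the main difficulty---ruling out higher-genus degenerations of $\Sigma_\a^\e\cap N$ collapsing onto $\a$---but your proposed resolution does not close the gap. First, the curvature bound of Theorem~\ref{white2c} near the bridge depends on the boundary $\widehat\Gamma_\a^\e$, whose two arcs $\a_\e^\pm$ lie at distance $\sim\e$; the estimate is therefore not uniform as $\e\to 0$, and you cannot conclude that $\Sigma_\a^\e\cap N$ is a normal graph over the collapsing strip $S_\a^\e$. Second, the density argument is misapplied: the limit measure $\mu_T$ is supported on $\Sigma$, so for a small ball centered on $\a$ away from $\Sigma$ the measure convergence only gives $\mu_{T_i}(\B)\to 0$, i.e.\ the bridge area is small---it says nothing about ``density $1$ at points of $S_\a^\e$''. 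Small area alone does not preclude a sequence of tiny handles in $\Sigma_\a^\e\cap N$ shrinking onto $\a$. Excluding exactly this scenario is the non-trivial content of White's bridge principle, and that is why the paper imports Theorem~\ref{white4} rather than arguing directly.
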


\begin{pf}
We prove this lemma by contradiction. Assume the lemma is not true, then for any $\e_\a$, there exists $0<\e<\e_a$ such that the conclusion (i) of
the lemma (and in particular conclusion (ii), in case $\Gamma$ is smooth) is not true. Hence there exists a sequence $\{\e_i\}_{i\in\N}$, with
 $\e_i\downarrow 0$ such that for all $i$, the lemma with $\e=\e_i$ fails.

Let $\Sigma_\a^{\e_i}$ be a sequence of absolutely area minimizing surfaces with $\partial\Sigma_\a^{\e_i}=\widehat\Gamma_\a^{\e_i}$. Then by Lemma
\ref{bridge principle} and in particular Remark \ref{C2convrmk}, there exists an absolutely area minimizing surface  $\Sigma$, with
$\partial\Sigma=\Gamma$, such that after passing to a subsequence, $\Sigma_\a^{\e_i}\to\Sigma$, with the convergence being with respect to the
Hausdorff distance, the measure and also it is a smooth convergence in compact sets of $\ov M\setminus(\Gamma\cup \a)$ and it is also smooth around
points of $\Gamma\setminus\alpha$, where $\Gamma$ is smooth. Finally we let $g_0=\genus(\Sigma)<\infty$

Let now $\Gamma_0\subset\Sigma$ be a smooth curve on $\Sigma$, such that $\Gamma_0=\Gamma$ in $W\supset\a$, $\Gamma_0\sim\Gamma$ and such that the
part of $\Sigma$ bounded by $\Gamma_0$, which we call $\Sigma_0$, has genus $g_0$. Note that $\Sigma_0$ is absolutely area minimizing and it is smooth
(by Theorem \ref{absmincur} and \cite{HSi}). We furthermore take $\Gamma_0$, so that it is not identical to $\Gamma$, so that $\Sigma_0$ is strictly contained in $\Sigma$ and therefore it is strictly stable (because $\Sigma$ is absolutely area minimizing), (cf. \cite{FiSc}).

Now, because of the fact that the convergence $\Sigma^{\epsilon_i}_\alpha\to \Sigma$ is smooth in compact
sets of $\overline M\setminus (\Gamma\cup\a)$ and also around points of $\Gamma\setminus \a$ where $\Gamma$ is smooth, we can find curves
$\widehat\Gamma_{0\a}^{\e_i}\subset \Sigma_\a^{\e_i}$  such that

 \begin{itemize}
 \item Under hypothesis (a), i.e. when $\Gamma$ is a single simple closed curve and $\a$ a $\Gamma$-admissible path, $\widehat\Gamma_{0\a}^{\e_i}\subset \Sigma_\a^{\e_i}$
 consists of two connected components and under hypothesis (b), i.e. when $\Gamma=\Gamma_1\cup\Gamma_2$  and $\a$ is a path connecting them,
 $\widehat\Gamma_{0\a}^{\e_i}\subset \Sigma_\a^{\e_i}$ consists of a single connected component.
 \item $\widehat\Gamma_{0\a}^{\e_i}=\widehat\Gamma_{\a}^{\e_i}$ in $W\supset\widehat\Gamma_{\a}^{\e_i}\cap S_\a^{\e_i}$
 \item $\widehat\Gamma_{0\a}^{\e_i}$ is smooth away from $S_\a^{\e_i}$ (hence $\widehat\Gamma_{0\a}^{\e_i}$ is smooth) and
 \item $\widehat\Gamma_{0\a}^{\e_i}\to \Gamma_0$ smoothly away from $\a$.
 \end{itemize}

Let $\Sigma^{\e_i}_{0\a}$ be the part of $\Sigma^{\e_i}_{\a}$ bounded by $\widehat\Gamma_{0\a}^{\e_i}$. Then, because of the convergences $\Sigma_\a^{\e_i}\to\Sigma$ and $\widehat\Gamma_{0\a}^{\e_i}\to \Gamma_0$, as described above, we have that $\Sigma^{\epsilon_i}_{0\alpha}$ converges to $\Sigma_0$ and the convergence $\Sigma^{\epsilon_i}_{0\alpha}\to \Sigma_0$ is smooth in compact
sets of $\overline M\setminus \a$.

Furthermore, letting
$\Gamma_i=\partial(\Sigma^{\e_i}_{0\a}\cup S_\a^{\e_i})$, we have that  $\Gamma_i$ is smooth, $\widehat\Gamma_{0\a}^{\e_i}$ is the curve that we obtain by
attaching a bridge around $\a$ on the curve $\Gamma_i$ as described in Definitions \ref{wtG} and \ref{wtG2} and $\Gamma_i\to\Gamma_0$ smoothly.

Note, that since $\Sigma_0$ is strictly stable, by White's bridge principle, and in particular by (i) of Theorem \ref{white4}
 (see also Remark \ref{rmkonwhite4}), there exists an open set $U$ containing $\Sigma_0\cup\a$, such that $\Sigma_0$ is the unique area
minimizing surface with boundary $\Gamma_0$ in $\overline U$. Since the convergence $\Sigma^{\epsilon_i}_{0\alpha}\to\Sigma_0$ is also with respect to the
Hausdorff distance, for any $\eta>0$, there exists  $i_0$ such that for any $i\ge i_0$, $\Sigma^{\epsilon_i}_{0\alpha}$ is in $\eta$-neighborhood of
$\Sigma_0\cup\alpha$. Taking $\eta$ small enough (so that the $\eta$-neighborhood of $\Sigma_0\cup\alpha$ is contained in $U$) we have that there exists
$i_0$ such that for any $i\ge i_0$, $\Sigma^{\epsilon_i}_{0\alpha}\subset U$. Since $\Sigma^{\epsilon_i}_{0\alpha}$ are absolutely area minimizing they
also minimize area among all surfaces with the same boundary in $\overline U$.

Let $\Sigma_i$ now be an area minimizing surface in $U$, such that $\partial \Sigma_i=\Gamma_i$. Then $\area(\Sigma_i)\le
\area(\Sigma^{\e_i}_{0\a})+\area (S_\a^{\e_i})\le  \area(\Sigma^{\e_i}_{\a})+\area (S_\a^{\e_i})$, so after passing to a subsequence, $\Sigma_i$
converge to an area minimizing surface in $U$ with boundary equal to $\Gamma_0$, and since $
\Sigma_0$ is the unique such surface, we have that $\Sigma_i\to\Sigma_0$. Arguing as in the proof of Lemma
\ref{bridge principle} and in particular as in Remark \ref{C2convrmk}, we can apply White's curvature estimate, Theorem \ref{white2c}, to conclude that the convergence is smooth.

Note now that $\Sigma^{\e_i}_{0\a}$ is absolutely area minimizing and thus by White's bridge principle, in particular by (ii) of Theorem \ref{white4}, for $i$ large enough
$\Sigma_{0\a}^{\e_i}$ and $\Sigma_i\cup S_\a^{\e_i}$ are diffeomorphic. Since $\Sigma_i\to\Sigma_0$ smoothly and  $(\Sigma_0\cup
S_\a^{\e_i})\sim(\Sigma_0\cup\a)$ we have that $\Sigma_{0\a}^{\e_i}$ is homotopic to $\Sigma_0\cup\a$, which by construction is homotopic to
$\Sigma\cup\alpha$, and thus $\chi(\Sigma_{0\a}^{\e_i})=\chi(\Sigma\cup\a)$.  This leads to a contradiction, since we have assumed that (i) of the
lemma with $\e=\e_i$ fails for any $i$ and thus finishes the proof of part (i) of the lemma.

If $\Gamma$ is smooth, then we can pick $\Gamma_0$, so that $\Sigma\setminus\Sigma_0$ is topologically a disk. Furthermore, since $\Gamma_0=\Gamma$ in $W\supset \a$, there exists an open set $\mathcal O\subset M\setminus \a$, such that $\mathcal O\supset \Sigma\setminus\Sigma_0$,   $\mathcal O\supset \Sigma_{\a}^{\e_i}\setminus\Sigma_{0\a}^{\e_i}$ for $i$ large enough and the convergence $\Sigma_\a^{\e_i}\to\Sigma$
is smooth in $\mathcal O$. This, along with the convergence $\Sigma_{0\a}^{\e_i}\to\Sigma_0$ and the fact that $\Sigma\setminus\Sigma_0$ is topologically a disk, implies that for large $i$, $\Sigma_{\a}^{\e_i}\setminus\Sigma_{0\a}^{\e_i}$ is also topologically a disk and therefore $\chi(\Sigma_{\a}^{\e_i})=\chi(\Sigma_{0\a}^{\e_i})=\chi(\Sigma\cup\a)$.  This leads also to a
contradiction, since we have assumed that if $\Gamma$ is smooth, then (ii) of the lemma with $\e=\e_i$ fails for any $i$ and thus finishes the proof
of the lemma.

\end{pf}

\begin{cor}\label{genuscor}
Let $\Gamma, \a$ satisfy assumption \emph{(a)} or \emph{(b)} of \emph{Theorem \ref{bridge principle}}. We further assume that $\a$ is smooth,
$\Gamma$ is smooth in an open set $W$ containing $\a$ and for any absolutely area minimizing surface $\Sigma$, with $\partial\Sigma=\Gamma$,
 $\Sigma$ is connected  and $\genus(\Sigma)<\infty$. Then, for almost every $0<\e<\e_a$ , where $\e_a$ is as in \emph{Lemma \ref{genuscor}}, and for any absolutely area
minimizing surface $\Sigma_\a^{\e}$, with $\partial\Sigma_\a^{\e}=\widehat\Gamma_\a^\e$, where $\widehat\Gamma_\a^\e$ is as in \emph{Definition \ref{wtG}}
or \emph{\ref{wtG2}} in case assumption \emph{(a)} or \emph{(b)} is satisfied respectively, the following holds:

\begin{itemize}
\item[(i)]
\begin{itemize}
\item under assumption \emph{(a)}
\[\genus (\Sigma_\a^{\e})\ge\genus(\Sigma),\]
\item under assumption \emph{(b)}
\[\genus (\Sigma_\a^{\e})\ge\genus(\Sigma)+1\]
\end{itemize}
for some absolutely area minimizing surface $\Sigma$ with $\partial\Sigma=\Gamma$ .
\item[(ii)] If in particular $\Gamma$ is smooth, then
\begin{itemize}
\item under assumption \emph{(a)}
\[\genus (\Sigma_\a^{\e})=\genus(\Sigma),\]
\item under assumption \emph{(b)}
\[\genus (\Sigma_\a^{\e})=\genus(\Sigma)+1.\]
\end{itemize}
\end{itemize}
\end{cor}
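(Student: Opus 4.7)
The plan is to combine Lemma~\ref{genuslem} with elementary Euler-characteristic arithmetic. For almost every $0<\e<\e_\a$ and any absolutely area minimizing $\Sigma_\alpha^{\e}$ with $\partial\Sigma_\alpha^{\e}=\widehat\Gamma_\alpha^{\e}$, the lemma produces a (connected) subsurface $\Sigma_{0\alpha}^{\e}\subset\Sigma_\alpha^{\e}$ satisfying $\chi(\Sigma_{0\alpha}^{\e})=\chi(\Sigma\cup\alpha)$ for some absolutely area minimizing $\Sigma$ spanning $\Gamma$, with two boundary components in case~(a) and one boundary component in case~(b). From here the argument is essentially bookkeeping with $\chi$.

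I then compute both Euler characteristics explicitly. Since $\Sigma$ is assumed connected and orientable with $k$ boundary components (here $k=1$ under hypothesis~(a) and $k=2$ under hypothesis~(b)), one has $\chi(\Sigma)=2-2\genus(\Sigma)-k$. The path $\alpha$ is a one-cell meeting $\Sigma$ precisely at its two endpoints $\{x,y\}$, so inclusion-exclusion gives
\[
\chi(\Sigma\cup\alpha)=\chi(\Sigma)+\chi(\alpha)-\chi(\{x,y\})=\chi(\Sigma)-1.
\]
On the other side, $\chi(\Sigma_{0\alpha}^{\e})=2-2\genus(\Sigma_{0\alpha}^{\e})-k_0$, where $k_0=2$ in case~(a) and $k_0=1$ in case~(b). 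Equating and solving yields $\genus(\Sigma_{0\alpha}^{\e})=\genus(\Sigma)$ in case~(a) and $\genus(\Sigma_{0\alpha}^{\e})=\genus(\Sigma)+1$ in case~(b).

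For part~(i), what remains is the inequality $\genus(\Sigma_\alpha^{\e})\ge\genus(\Sigma_{0\alpha}^{\e})$. This is the standard topological fact that the genus of a connected orientable compact subsurface of a connected orientable compact surface is bounded above by the genus of the ambient surface: the closure of $\Sigma_\alpha^{\e}\setminus\Sigma_{0\alpha}^{\e}$ is a compact orientable surface attached to $\Sigma_{0\alpha}^{\e}$ along boundary circles, and such gluings cannot destroy independent handles. For part~(ii), Lemma~\ref{genuslem}(ii) says that when $\Gamma$ is smooth we may take $\Sigma_{0\alpha}^{\e}=\Sigma_\alpha^{\e}$, so the $\chi$-computation above delivers the stated equalities at once. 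The only mildly nontrivial step is the topological inequality in part~(i), which I plan to dispose of by appealing to the classification of compact orientable surfaces.
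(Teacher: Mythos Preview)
Your proposal is correct and follows essentially the same route as the paper's own proof: invoke Lemma~\ref{genuslem} to obtain $\Sigma_{0\alpha}^{\e}$ with $\chi(\Sigma_{0\alpha}^{\e})=\chi(\Sigma\cup\alpha)=\chi(\Sigma)-1$, read off the genus of $\Sigma_{0\alpha}^{\e}$ from the classification formula $\chi=2-2g-k$ using the prescribed number of boundary components, and then use the subsurface inclusion $\Sigma_{0\alpha}^{\e}\subset\Sigma_\alpha^{\e}$ for the inequality in~(i) and Lemma~\ref{genuslem}(ii) for the equality in~(ii). Your presentation is slightly more explicit than the paper's in justifying $\chi(\Sigma\cup\alpha)=\chi(\Sigma)-1$ via inclusion--exclusion and in articulating why $\genus(\Sigma_\alpha^{\e})\ge\genus(\Sigma_{0\alpha}^{\e})$, but the argument is the same.
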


\begin{proof}

Recall that by the classification of orientable surfaces, $S$ is a genus $g$ surface with $k$
boundary components if and only if $\chi(S)= (2-k)-2g$.

Let $\Sigma^{\e}_{0\alpha}\subset \Sigma^{\e}_0$ be as in (ii) of Lemma \ref{genuslem}, so that
\[\chi(\Sigma^{\e}_{0\alpha})=\chi(\Sigma\cup\alpha)\]
for some absolutely area minimizing surface $\Sigma$ with $\partial\Sigma=\Gamma$ and so that $\Sigma_{0\alpha}^\e$ has two boundary components in case assumption (a) is satisfied and a unique boundary component in case assumption (b) is satisfied. Let also $g_0=\genus(\Sigma)<\infty$.

By realizing the arc $\alpha$ as an edge in the triangulation, we have
$\chi(\Sigma\cup{\alpha}) = \chi(\Sigma) -1$.

Now under hypothesis (a),  $\Sigma$ is a genus $g_0$ surface with one boundary component, therefore $\chi(\Sigma\cup{\alpha}) = \chi(\Sigma)
-1=-2g_0$. Since, $\Sigma^{\epsilon}_{0\alpha}$ has two boundary components and $\chi(\Sigma_{0\a}^{\e})=\chi(\Sigma\cup\a)=-2g_0$, we have that
$\Sigma^{\epsilon}_{0\alpha}$ is a genus $g_0$ surface with two boundary components.

On the other hand, under hypothesis (b), as $\Sigma$ is a connected surface with two boundary components, $\chi(\Sigma\cup{\alpha}) = \chi(\Sigma)
-1=-2g_0-1$ where $g_0$ is the genus of $\Sigma$. Since, $\Sigma^{\epsilon}_{0\alpha}$ has a unique boundary component and
$\chi(\Sigma_{0\a}^{\e})=\chi(\Sigma\cup\a)=-2g_0-1$, we have that $\Sigma^{\epsilon}_{0\alpha}$ is a genus $g_0+1$ surface with one boundary
component.

Note now that since $\Sigma_{0\a}^{\e}\subset\Sigma_\a^{\e}$, $\genus(\Sigma_\a^{\e})\ge\genus(\Sigma_{0\a}^{\e})$. This finishes the proof of (i) of
the lemma.

 Part (ii) is immediate from (ii) of Lemma \ref{genuslem}, since if $\Gamma$ is smooth, then $\Sigma_{0\a}^{\e}=\Sigma_\a^{\e}$.
\end{proof}

\subsection{Horn Surgery and Thin Handles}\label{hssection} \

\

In the previous part, by adding a bridge near a $\Gamma$-admissible path $\alpha$ to the simple closed curve $\Gamma$, we get
$\widehat{\Gamma}^\epsilon_\alpha=\Gamma\sharp S_\a^\e$ (as in Definition \ref{wtG}) which is the union of two simple closed curves
$\widehat{\Gamma}^{\epsilon,1}_\alpha$ and $\widehat{\Gamma}^{\epsilon,2}_\alpha$ in $\partial M$. Now,  we repeat this process one more time, by
connecting the two simple closed curves $\widehat{\Gamma}^{\epsilon,1}_\alpha$ and $\widehat{\Gamma}^{\epsilon,2}_\alpha$ via a bridge around a new
path $\tau$, connecting the two curves. Thus, we get a simple closed curve
$\widehat{\Gamma}^{\epsilon,\delta}_{\alpha,\tau}=\widehat\Gamma_{\a}^\e\sharp S_\tau^\delta$ (as in Definition \ref{a2} and where $\d$ here denotes the width of the new bridge) which
bounds an absolutely area minimizing surface whose genus, we will show, is strictly greater than the genus of the absolutely area minimizing surface
$\Sigma$ in $M$ with $\partial \Sigma = \Gamma$. In other words, by adding two bridges $S_\epsilon(\alpha)$ and $S_\delta(\tau)$ successively to a
simple closed curve $\Gamma$, we modify the absolutely area minimizing surface it bounds by adding a thin handle. We will refer to this operation on
$\Gamma$ as {\em the horn surgery} (See Figure 4), and to the resulting additional handle in the new absolutely area minimizing surface as {\em the
thin handle} (See Figure 5). Now, let's give the formal construction and definitions.

\begin{figure}[t]

\relabelbox  {\epsfxsize=5in

\centerline{\epsfbox{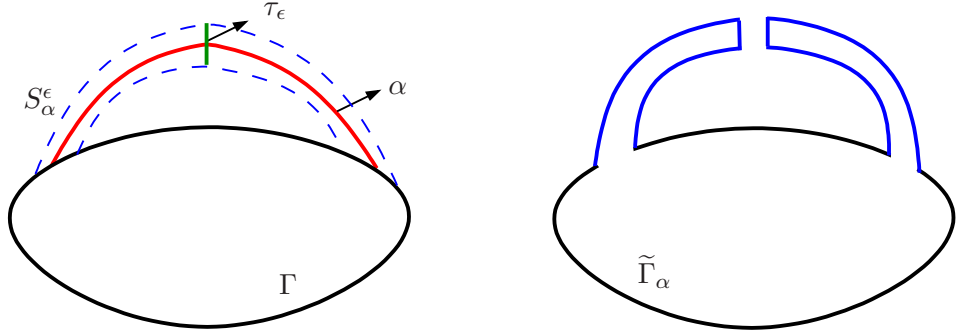}}}

\relabel{1}{$\Gamma$}

\relabel{2}{$S^\epsilon_\alpha$}

\relabel{3}{$\tau_\epsilon$}

\relabel{4}{$\alpha$}

\relabel{5}{$\widetilde{\Gamma}_\alpha$}

\endrelabelbox

\caption{\label{fig:figure2} \small {The Horn Surgery on $\Gamma$ along $\alpha$. }}

\end{figure}

We will use the notation introduced in  Section \ref{bridgeprinciplesubsection}. Let $\Gamma$, $\alpha$ and $\widehat{\Gamma}^\epsilon_\alpha$ be
as in Definition \ref{wtG}. Let $z$ be the midpoint of $\alpha$. Let, $z^+_\epsilon, z^-_\epsilon$ be the midpoints of
$\alpha^+_\epsilon,\alpha^-_\epsilon$ respectively. Let $\tau_\epsilon$ be a small path from $z^+_\epsilon$ to $z^-_\epsilon$ through $z$ in
$\partial M$. For $0<\delta\ll \epsilon$, let $S^\delta_\tau= N_\delta(\tau_\epsilon)\cap S^\epsilon_\alpha$ be a tiny strip (``bridge") of width
$\d$ around $\tau_\epsilon$ connecting the simple closed curves $\widehat{\Gamma}^{\epsilon,1}_\alpha$ and $\widehat{\Gamma}^{\epsilon,2}_\alpha$
where $N_\delta(\tau_\epsilon)$ is the $\delta$ neighborhood of $\tau_\epsilon$ in $\partial M$. Here, we assume $\delta$ is sufficiently small so
that for any $0<\delta'<\delta$, $\partial S^{\delta'}_\tau \cap \inr(S^\epsilon_\alpha)$ consists of exactly two arc segments, which we call
$\tau^+_\epsilon$ and $\tau^-_\epsilon$. Then $\partial S^\delta_\tau = (\tau^+_\epsilon \cup \tau^-_\epsilon) \cup (\gamma^1_\delta \cup
\gamma^2_\delta)$ where $\gamma^i_\delta \subset \widehat{\Gamma}^{\epsilon,i}_\alpha$, $i=1,2$.
\begin{defn}\label{hs}
We define {\em the horn surgery} on $\Gamma\in\A$ along a $\Gamma$-admissible path
$\alpha$ to be the curve
\[\widetilde{\Gamma}_\alpha = \Gamma\sharp S^\epsilon_\alpha\sharp S^\delta_\tau\]
with $\tau$ being a path passing through the midpoint of $\a$ as described above (recall Definitions \ref{wtG}, \ref{wtG2}). With the above notation this curve is given \[\widetilde{\Gamma}_\alpha=(\widehat{\Gamma}^\epsilon_\alpha \setminus
(\gamma^1_\delta \cup \gamma^2_\delta)) \cup (\tau^+_\epsilon \cup \tau^-_\epsilon).\] Hence, $\widetilde{\Gamma}_\alpha$ is a simple closed curve in
$\partial M$ (See Figure 4).
\end{defn}

\begin{rmk}\label{smoothglueII}
By Remark \ref{smoothglue}, we note here that if $\a$ is smooth and the curve $\Gamma$ is smooth around $\a$, then $\widehat\Gamma_\a^\e$ is smooth
around  $\widehat\Gamma_\a^\e\cap S^\e_\a$ and in particular around $\widehat\Gamma_\a^\e\cap\tau$. Then we can take the path $\tau$ to be smooth and we can consider gluing the bridge $S^\d_\tau$ smoothly
along $\widehat\Gamma_\a^\e$, so that the resulting curve $\widetilde{\Gamma}_\a$ is smooth around $\widehat\Gamma_\a^\e\cap S^\d_\tau$ and hence it is also smooth around $\widehat\Gamma_\a^\e\cap S^\e_\a$.

In the rest of the paper and as long as these smoothness assumptions are satisfied we will always consider constructing the curve $\wt{\Gamma}_\a$ to
be  smooth around $\a$.
\end{rmk}

Let $\Gamma\in \A_g$, and $\alpha$ be a $\Gamma$-admissible path. Now, we claim that after the horn surgery on $\Gamma$ along $\alpha$,
$\widetilde{\Gamma}_\alpha$ is in $\A_{g+1}$. In other words, the horn surgery on the boundary curve $\Gamma$ will add a thin handle to an area
minimizing surface $\Sigma$ with boundary $\Gamma$ and thus increase its genus (See Figure 5).

\begin{figure}[t]

\relabelbox  {\epsfxsize=5in

\centerline{\epsfbox{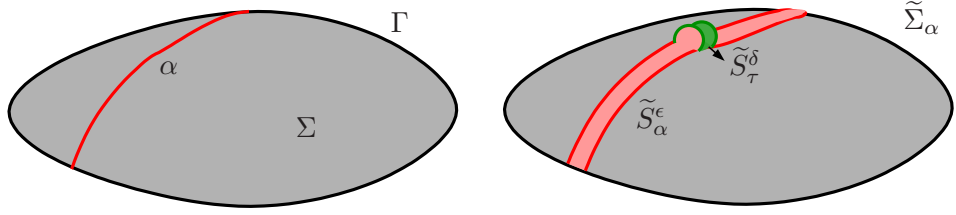}}}

\relabel{1}{$\Gamma$}

\relabel{2}{$\Sigma$}

\relabel{3}{$\alpha$}

\relabel{5}{$\widetilde{\Sigma}_\alpha$}

\relabel{6}{$\widetilde{S}^\delta_\tau$}

\relabel{7}{$\widetilde{S}^\epsilon_\alpha$}

\endrelabelbox

\caption{\label{fig:figure2} \small {Adding a {\em Thin Handle} to $\Sigma$ along $\alpha$. Here, $\widetilde{S}^\epsilon_\alpha$ and
$\widetilde{S}^\delta_\tau$ represent the parts of $\widetilde{\Sigma}_\alpha$, which is close to the strips $S^\epsilon_\alpha$ and $S^\delta_\tau$
in $\partial M$.}}

\end{figure}

\begin{lem}\label{thinhandle} {\bf (Thin handle)}
Let $\Gamma\in\A_g$, and $\alpha$ be a $\Gamma$-admissible smooth path, such that $\a$ is smooth, $\Gamma$ is smooth in an open set $W$ containing
$\a$ and for  any absolutely area minimizing surface $\Sigma$ with $\partial\Sigma=\Gamma$, $\genus(\Sigma)<\infty$.
 Then there exists $\e_a$, such that for almost every
$0<\epsilon<\epsilon_\alpha$ the following holds:
\begin{itemize}
\item[(i)] There is a sufficiently small $\delta_{\tau}>0$, such that and for any $0<\delta<\delta_{\tau}$, after the horn surgery on
$\Gamma$ along $\alpha$ (see \emph{Definition \ref{hs}}), $\widetilde{\Gamma}_\alpha\in\A_{g+1}$.
In particular, for any absolutely area minimizing surface $\wt\Sigma_\a$ with $\partial\wt\Sigma_\a=\wt\Gamma_\a$,   $\genus(\wt\Sigma_\a)\ge\genus(\Sigma)+1$, where
$\Sigma$ is an absolutely area minimizing surface with $\partial\Sigma=\Gamma$.
\item[(ii)] If $\Gamma$ is smooth, then  $\widetilde{\Gamma}_\alpha$ is a smooth curve in $\A_{g+1}$ and in particular, for any absolutely area minimizing surface
$\wt\Sigma_\a$ with $\partial\wt\Sigma_\a=\wt\Gamma_\a$,   $\genus(\wt\Sigma_\a)=\genus(\Sigma)+1$, where $\Sigma$ is an absolutely area minimizing surface with
$\partial\Sigma=\Gamma$.

\end{itemize}
\end{lem}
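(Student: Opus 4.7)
The plan is to decompose the horn surgery into its two constituent bridge attachments and apply Corollary~\ref{genuscor} to each in turn. The first bridge adds the strip $S^\e_\alpha$ around a $\Gamma$-admissible smooth path $\alpha$ and splits $\Gamma$ into the two-component curve $\widehat{\Gamma}^\e_\alpha = \widehat{\Gamma}^{\e,1}_\alpha\cup\widehat{\Gamma}^{\e,2}_\alpha$, so this step matches hypothesis (a) of Corollary~\ref{genuscor}. The second bridge adds $S^\delta_\tau$ around a small smooth path $\tau$ through the midpoint of $\alpha$ that connects the two components of $\widehat{\Gamma}^\e_\alpha$; this step matches hypothesis (b) of Corollary~\ref{genuscor}, applied to $\widehat\Gamma^\e_\alpha$ and $\tau$.

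For the first surgery, the hypotheses of Corollary~\ref{genuscor} are immediate: $\alpha$ is smooth and $\Gamma$-admissible, $\Gamma$ is smooth in $W\supset \alpha$, every absolutely area minimizing surface bounded by $\Gamma$ is connected (a single closed component with empty boundary would be a closed minimal surface in the strictly mean convex $M$, which is excluded by the maximum principle) and has finite genus. Thus we obtain $\e_\alpha>0$ such that for almost every $0<\e<\e_\alpha$, any absolutely area minimizing $\Sigma^\e_\alpha$ with $\partial\Sigma^\e_\alpha=\widehat\Gamma^\e_\alpha$ satisfies $\genus(\Sigma^\e_\alpha)\ge \genus(\Sigma) \ge g$ for some absolutely area minimizing $\Sigma$ with $\partial\Sigma=\Gamma$ (recalling $\Gamma\in\A_g$).

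Before the second surgery I must verify the hypotheses of Corollary~\ref{genuscor}(b) for $\widehat\Gamma^\e_\alpha$ and $\tau$. Smoothness of $\widehat\Gamma^\e_\alpha$ in an open set containing $\tau$ is guaranteed by Remark~\ref{smoothglueII} together with the smooth choice of $\tau$ through the midpoint of $\alpha$, and each component $\widehat\Gamma^{\e,i}_\alpha$ is nullhomologous because $\Sigma\cup S^\e_\alpha$ (followed by a cut along an arc of the strip) furnishes bounding surfaces. The finite-genus requirement follows from Step~1. The essential subtlety is connectedness of any AAM bounded by $\widehat\Gamma^\e_\alpha$: by Lemma~\ref{bridge principle}, for $\e$ small every such $\Sigma^\e_\alpha$ is Hausdorff- and area-close to $\Sigma\cup S^\e_\alpha$, which is connected; a putatively disconnected $\Sigma^\e_\alpha = T_1\sqcup T_2$ would make each $T_i$ an AAM bounded by one of the $\widehat\Gamma^{\e,i}_\alpha$, and combining the Hausdorff-closeness with area-comparison against $\Sigma$ (using that $\Sigma$ is AAM for $\Gamma$ and the bridge $S^\e_\alpha$ has area $O(\e)$) gives a contradiction. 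Granting this, Corollary~\ref{genuscor}(i)(b) applies and yields $\delta_\tau>0$ such that for almost every $0<\delta<\delta_\tau$, any AAM $\widetilde\Sigma_\alpha$ with $\partial\widetilde\Sigma_\alpha=\widetilde\Gamma_\alpha$ satisfies $\genus(\widetilde\Sigma_\alpha)\ge \genus(\Sigma^\e_\alpha)+1 \ge g+1$. This gives $\widetilde\Gamma_\alpha\in\A_{g+1}$, proving (i). For part~(ii), if $\Gamma$ is smooth globally then $\widetilde\Gamma_\alpha$ is smooth by iterating Remark~\ref{smoothglueII}, and applying part~(ii) of Corollary~\ref{genuscor} at each step turns the two inequalities into equalities, giving $\genus(\widetilde\Sigma_\alpha)=\genus(\Sigma^\e_\alpha)+1=\genus(\Sigma)+1$.

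The main obstacle is the connectedness check inside the second application of Corollary~\ref{genuscor}. The area-comparison argument sketched above needs to be made quantitative: one must show that no pair of AAM surfaces with boundaries $\widehat\Gamma^{\e,1}_\alpha$ and $\widehat\Gamma^{\e,2}_\alpha$ respectively can have total area arbitrarily close to $|\Sigma|$ as $\e\to 0$, unless they arise as the two boundary-adjacent pieces of a connected limit, contradicting disconnectedness. Once this is handled, the rest of the proof is a clean two-step application of the previously established genus-tracking corollary.
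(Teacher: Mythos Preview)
Your two-step application of Corollary~\ref{genuscor} is exactly the paper's argument: apply case~(a) to the pair $(\Gamma,\alpha)$ to get $\genus(\Sigma^\e_\alpha)\ge\genus(\Sigma)$, then case~(b) to the pair $(\widehat\Gamma^\e_\alpha,\tau)$ to get $\genus(\widetilde\Sigma_\alpha)\ge\genus(\Sigma^\e_\alpha)+1$, with equalities in the smooth case. So the overall strategy matches.

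You are right to flag connectedness of $\Sigma^\e_\alpha$ as a hypothesis that must be checked for the second application of Corollary~\ref{genuscor}; the paper applies the corollary without commenting on this. However, your proposed route via area comparison does not yield a contradiction: from $|\Sigma|\le|T_1|+|T_2|+|S^\e_\alpha|$ and $|T_1|+|T_2|=|\Sigma^\e_\alpha|\le|\Sigma|+|S^\e_\alpha|$ you only get $\bigl||\Sigma^\e_\alpha|-|\Sigma|\bigr|\le|S^\e_\alpha|$, which is consistent with a disconnected minimizer. The cleaner argument is already implicit in Lemma~\ref{genuslem}: the subsurface $\Sigma^\e_{0\alpha}\subset\Sigma^\e_\alpha$ constructed there is, for small $\e$, diffeomorphic to $\Sigma_0\cup S^\e_\alpha$ (via White's bridge principle, Theorem~\ref{white4}), hence connected, and by construction its two boundary components lie in neighborhoods of $\widehat\Gamma^{\e,1}_\alpha$ and $\widehat\Gamma^{\e,2}_\alpha$ respectively. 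Since $\Sigma^\e_{0\alpha}$ therefore joins the two boundary components of $\Sigma^\e_\alpha$ inside $\Sigma^\e_\alpha$, the latter is connected. With this correction your proof is complete and coincides with the paper's.
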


\begin{pf}
Recall (Remarks \ref{smoothglue}, \ref{smoothglueII}) that $\widehat \Gamma_\a^\e$ and $\widetilde{\Gamma}_\alpha$ are  both smooth around $S^\e_\a$. Note first that for any $\e<\e_a$, with $\e_\a$ as in  Corollary \ref{genuscor},  and for any absolutely area minimizing
surface $\Sigma_\a^\e$ with $\partial\Sigma_\a^\e=\widehat\Gamma_\a^\e$
\[\genus(\Sigma_\a^\e)\ge \genus(\Sigma)\]
for an absolutely area minimizing surface $\Sigma$ with $\partial\Sigma=\Gamma$ and the above inequality is actually an equality if $\Gamma$ is smooth.

We apply now again Corollary \ref{genuscor} with $\Gamma$, $\a$, $\Sigma_\a^\e$ and $\Sigma$ replaced by $\widehat\Gamma_\a^\e$, $\tau$,
$\wt\Sigma_\a$ and $\Sigma_\a^\e$ (note that here $\Sigma_\a^\e$, $\wt\Sigma_\a$ denote {\it{any}} absolutely area minimizing surfaces with boundary
equal to $\widehat\Gamma_\a^\e$ and $\wt\Gamma_\a$ respectively). Since now we use the hypothesis (b) of Corollary \ref{genuscor}, we get that there
exists a $\delta_\tau>0$ such that for any $0<\d<\d_\tau$ and for any absolutely area minimizing surface $\wt\Sigma_\a$, with
$\partial\wt\Sigma_\a=\wt\Gamma_\a$
\[\genus(\wt\Sigma_\a)\ge\genus(\Sigma_\a^\e)+1\ge \genus(\Sigma)+1\]
for an absolutely area minimizing surface $\Sigma_\a^\e$ with boundary equal to $\widehat\Gamma_\a^\e$ and an absolutely area minimizing surface
$\Sigma$ with boundary equal to $\Gamma$. And in the case when $\Gamma$ is smooth (cf. Remark \ref{smoothglueII}) the above inequalities are actually
equalities and thus
\[\genus(\wt\Sigma_\a)=\genus(\Sigma_\a^\e)+1= \genus(\Sigma)+1.\]

\end{pf}

\begin{rmk} In the horn surgery construction, the $\Gamma$-admissible path $\alpha$ determines the $\epsilon_\alpha>0$, and the
path $\tau$ determines the $\delta_\tau>0$. Since $\tau$ depends on the $0<\epsilon<\epsilon_\alpha$, $\delta_\tau$ also depends on
$\epsilon_\alpha$. From now on, we will assume the choices for the horn surgery on $\Gamma$ along $\alpha$ are canonical. In other words, we always
consider $0<\epsilon<\epsilon_\a$, $\tau$ and $0<\delta<\delta_\tau$, so that the horn surgery is well defined. Furthermore from now on we assume
that $\e_\a$ and $\d_\tau$ are small enough so that Lemma \ref{thinhandle} holds, i.e.  if $\Gamma$ is smooth around $\a$, then   for
$\widetilde{\Gamma}_\alpha = \Gamma\sharp S^\epsilon_\alpha\sharp S^\delta_\tau$ we can apply Lemma \ref{thinhandle} and in particular if $\Gamma$ is
smooth, then we can apply (ii) of Lemma \ref{thinhandle}.

\end{rmk}

Now, let's consider the space $\B_g$. We show that $\B_g$ is nonempty for any $g\geq 0$. As noted before, $\B_g = \A_g \setminus \A_{g+1}$. In Lemma \ref{Agopen} we proved that
 for any $g$, $\A_g$ is open, but this does not imply that $\B_g$ is nonempty. In order to show this, we need to rule out the case $\A_g =
\A_{g+1}$. In other words, we will show that for any $g\geq 0$, there is a simple closed curve $\Gamma_g\in \A$ such that the minimum genus among all
 absolutely area minimizing surface with boundary $\Gamma_g$ is exactly $g$.

\begin{thm}\label{Bgne}
For any $g\geq 0$, $\B_g$ is nonempty.
\end{thm}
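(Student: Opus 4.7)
The plan is induction on $g$, proving the slightly stronger statement that $\B_g$ contains a \emph{smooth} simple closed curve in $\partial M$. Smoothness of the representative is needed in the inductive step so that we can appeal to part (ii) of Lemma \ref{thinhandle}, which provides the sharp equality $\genus(\wt\Sigma_\a)=\genus(\Sigma)+1$ rather than the mere inequality of part (i).

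For the base case $g=0$, I take $\Gamma_0$ to be a smooth circle of small radius $r$ in $\partial M$ around a point $p$ where $\partial M$ is smooth. For $r$ small enough, $\Gamma_0$ is nullhomotopic in $\partial M$ and bounds a disk region $D_0\subset\partial M$ of area $\le Cr^2$, so any absolutely area minimizing surface $\Sigma_0$ with $\partial\Sigma_0=\Gamma_0$ satisfies $|\Sigma_0|\le Cr^2$. Rescaling the ambient metric by $1/r$, the manifold converges to flat $\R^3$ in a neighborhood of $p$ and $\Gamma_0/r$ converges to a round planar circle; the corresponding rescaled absolutely area minimizers subconverge (by the measure convergence as in Lemma \ref{bridge principle} together with White's curvature bound, Theorem \ref{white2c}) to the unique absolutely area minimizing surface in $\R^3$ bounded by the limit circle, namely a flat planar disk. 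Hence $\Sigma_0$ is diffeomorphic to a disk for $r$ small enough, so $\Gamma_0\in\B_0$ and it is smooth.

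For the inductive step, assume $\Gamma_g\in\B_g$ is smooth and fix an absolutely area minimizing surface $\Sigma_0\subset M$ with $\partial\Sigma_0=\Gamma_g$ and $\genus(\Sigma_0)=g$. Choose a smooth $\Gamma_g$-admissible path $\a\subset\partial M$, which exists because $\Gamma_g$ is smooth and hence has a two-sided annular neighborhood in $\partial M$. Perform the horn surgery to produce $\wt\Gamma_{g,\a}$; by Remarks \ref{smoothglue} and \ref{smoothglueII} we may arrange the new curve to be smooth. By Lemma \ref{thinhandle}(ii), for appropriate $\e$ and $\d$, every absolutely area minimizing $\wt\Sigma_\a$ with $\partial\wt\Sigma_\a=\wt\Gamma_{g,\a}$ obeys
\[\genus(\wt\Sigma_\a)=\genus(\Sigma)+1\]
for some absolutely area minimizing $\Sigma$ with $\partial\Sigma=\Gamma_g$. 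Since every such $\Sigma$ has $\genus(\Sigma)\ge g$, this gives $\wt\Gamma_{g,\a}\in\A_{g+1}$. For the reverse bound $\varphi(\wt\Gamma_{g,\a})\le g+1$, one uses the explicit genus-$(g+1)$ competitor $\Sigma_0\cup S^\e_\a\cup S^\d_\tau$, whose area is within $O(\e+\d)$ of $|\Sigma_0|$; tracing the cluster/limit construction in the proof of Lemma \ref{thinhandle} then identifies a minimum-genus absolutely area minimizing $\wt\Sigma_\a$ corresponding to $\Sigma_0$, forcing $\varphi(\wt\Gamma_{g,\a})=g+1$. Hence $\wt\Gamma_{g,\a}\in\B_{g+1}$ is smooth and the induction closes.

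The main obstacle is essentially already packaged into Lemma \ref{thinhandle}: once the thin-handle result is in hand, the induction is mechanical. The two genuinely delicate points outside that lemma are (i) the base case, which reduces to a standard blow-up/comparison argument against flat disks in $\R^3$, and (ii) the upper bound $\varphi(\wt\Gamma_{g,\a})\le g+1$ in the inductive step, which requires producing the explicit bridged competitor $\Sigma_0\cup S^\e_\a\cup S^\d_\tau$ and matching it, via the cluster argument of Lemma \ref{thinhandle}, to an actual absolutely area minimizing surface of the same genus.
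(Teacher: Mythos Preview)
Your approach follows the paper's: induction on $g$ using the horn surgery of Lemma~\ref{thinhandle}(ii). The base case is sound (and more detailed than the paper's brief assertion). But your inductive step has a gap at the upper bound $\varphi(\wt\Gamma_{g,\a})\le g+1$.

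The issue is this: if $\Gamma_g\in\B_g$ bounds absolutely area minimizing surfaces of several different genera $g,g',g'',\dots$, then Lemma~\ref{thinhandle}(ii) only tells you that each absolutely area minimizing $\wt\Sigma_\a$ has genus $\genus(\Sigma)+1$ for \emph{some} absolutely area minimizing $\Sigma$ for $\Gamma_g$; there is no mechanism to force $\Sigma=\Sigma_0$, your chosen minimum-genus representative. Your suggestion to ``trace the cluster/limit construction'' does not achieve this: the limiting $\Sigma$ in that proof is extracted by compactness from whatever sequence of minimizers you start with, not prescribed in advance. Likewise, the genus-$(g+1)$ competitor $\Sigma_0\cup S^\e_\a\cup S^\d_\tau$ bounds the \emph{area} of the minimizer, not its genus.

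The paper avoids this by (implicitly) carrying a stronger inductive hypothesis: the iteratively constructed curves $\Gamma_k$ have the property that \emph{every} absolutely area minimizing surface they bound has genus exactly $k$, not merely that the minimum genus is $k$. Your own base case already establishes this at level $0$ (sufficiently small circles bound only disks), and Lemma~\ref{thinhandle}(ii) propagates it cleanly: if every absolutely area minimizing $\Sigma$ for $\Gamma_g$ has genus $g$, then the ``some $\Sigma$'' appearing in the lemma is forced to have genus $g$, so every absolutely area minimizing $\wt\Sigma_\a$ has genus exactly $g+1$. Replacing your weak hypothesis $\Gamma_g\in\B_g$ by this stronger one closes the induction with no appeal to competitors or limit-tracing.
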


\begin{pf} Since $\B_g=\A_g\setminus\A_{g+1}$, it suffices to show the existence of a simple closed curve $\Gamma_g\in \A_g$ such that $\Gamma_g$ bounds an absolutely area
minimizing surface $\Sigma$ of genus $g$.

Clearly, $\B_0$ is not empty, as we can take a sufficiently small smooth simple closed curve in $\partial M$, so that the absolutely area minimizing surface
bounded by this curve is a smooth disk in $M$, say $D_0$, and since $M$ is strictly mean convex, $D_0$ is also properly embedded, i.e. $D_0\cap\partial M=\partial D$.

Now, let $\Gamma_0$ be a smooth curve in $\B_0$ and take a $\Gamma_0$-admissible and smooth path $\alpha_1$. Then, by (ii) of Lemma \ref{thinhandle},
the horn surgery on $\Gamma_0$ along $\alpha_1$ will give us a simple closed and smooth curve $\widetilde{\Gamma_0}_{\alpha_1}$, say $\Gamma_1$, such
that $\Gamma_1 \in \B_1$.

We can argue now by induction, that for any $k$ there exists a smooth curve in $\B_k$ as follows: Assume that there exists a smooth curve
$\Gamma_{k-1}\in\B_{k-1}$ and take a $\Gamma_{k-1}$-admissible and smooth path $\alpha_k$. The horn surgery on $\Gamma_{k-1}$ along $\alpha_k$ (Lemma
\ref{thinhandle}) gives a new smooth curve $\Gamma_k$ with $\Gamma_k \in \B_k$ and thus $\B_k\ne\emptyset$.
\end{pf}

\begin{rmk}\label{Bgnermk} Intuitively, this gives a construction of adding a handle to an absolutely area minimizing surface by
modifying the boundary curve. Note that, since the handles can be as small as we want, the area of the new surfaces can be as close as we want to the
area of the original surface (cf. Lemma \ref{bridge principle}). Furthermore, by the proof of Theorem \ref{Bgne}, we actually conclude that not only
can we always find a simple closed curve in $\B_g$, but also we can find a smooth simple closed curve in $\B_g$.
\end{rmk}

In Lemma \ref{Agopen}, we proved that for any $g>0$, $\A_g$ is open in $\A$, and in Theorem~\ref{Bgne}, we showed that $\B_g=\A_g\setminus\A_{g+1}$
is nonempty.  Now, by using the handle construction in Lemma \ref{thinhandle}, we will prove that $\A_g$ is not only open in $\A$, but also dense in
$\A$.

\begin{thm}\label{Adense}
For any $g\geq 0$, $\A_g$ is dense in $\A$, i.e. $\overline{\A}_g = \A$.
\end{thm}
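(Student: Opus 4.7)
The strategy is to induct on $g$, using the horn surgery of Lemma \ref{thinhandle} — which was built for exactly this purpose — to produce curves in $\A_g$ arbitrarily $\C^0$-close to any given $\Gamma\in\A$. Fix $\Gamma\in\A$ and $\eta>0$; the goal is to exhibit $\Gamma'\in\A_g\cap\U_\Gamma(\eta)$. Since $\A_0=\A$, we may assume $g\ge 1$.

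First, approximate $\Gamma$ by a smooth simple closed curve $\Gamma_0\in\A\cap\U_\Gamma(\eta/2)$: on the piecewise smooth surface $\partial M$, smooth simple closed curves are $\C^0$-dense among continuous ones, and for a sufficiently fine approximation $\Gamma_0$ is automatically homotopic to $\Gamma$ in the annular collar of Definition \ref{c0rmk}, hence still nullhomologous. Then I would inductively construct smooth curves $\Gamma_1,\dots,\Gamma_g$ as follows: assuming $\Gamma_{k-1}$ is a smooth curve in $\A_{k-1}$, pick a short smooth $\Gamma_{k-1}$-admissible path $\alpha_k\subset\partial M$ (existence of such paths on a smooth curve in a surface is elementary — take a short arc lying on one side of $\Gamma_{k-1}$ joining two nearby points of it, so that the caps $C_x,C_y$ fall on the same side of $\Gamma_{k-1}$), and apply Lemma \ref{thinhandle}(ii) with parameters $\e<\e_{\alpha_k}$ and $\d<\d_{\tau_k}$. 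The resulting smooth simple closed curve $\Gamma_k$ lies in $\A_k$: every absolutely area minimizing surface bounded by $\Gamma_k$ has genus equal to $\genus(\Sigma)+1$ for some absolutely area minimizing $\Sigma$ bounded by $\Gamma_{k-1}$, and this is $\ge k$ since $\Gamma_{k-1}\in\A_{k-1}$. As the surgery modifies $\Gamma_{k-1}$ only inside a neighborhood of $\alpha_k$ of diameter $O(\length(\alpha_k)+\e+\d)$, choosing $\length(\alpha_k)$, $\e$, $\d$ small enough forces $\Gamma_k\in\U_{\Gamma_{k-1}}(\eta/(2g))$. The triangle inequality then gives $\Gamma_g\in\U_\Gamma(\eta)$.

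The main technical point is verifying the $\C^0$-closeness in the parametrized sense required by Definition \ref{c0rmk}: one must exhibit maps $S^1\to\partial M$ representing $\Gamma_k$ and $\Gamma_{k-1}$ that are uniformly close. This should follow by reparametrizing $\Gamma_k$ to agree with $\Gamma_{k-1}$ outside the surgery region and to detour through the short new arcs $\alpha^\pm_\e,\tau^\pm_\e$ over a short sub-interval of $S^1$; since these arcs all lie within an $O(\length(\alpha_k)+\e+\d)$-neighborhood of a fixed point of $\Gamma_{k-1}$, the resulting parametrizations differ by at most that amount in $\C^0$. Nullhomology is preserved at every step because each horn surgery is supported inside a topological disk in $\partial M$ (the union of the strips $S^\e_{\alpha_k}$ and $S^\d_{\tau_k}$), so the new curve is homotopic to the old one in $\partial M$ and in particular remains in $\A$.
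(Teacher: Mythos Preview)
Your proposal is correct and follows essentially the same approach as the paper: approximate the given curve by a nice curve and then apply the horn surgery of Lemma~\ref{thinhandle} repeatedly, with the admissible arcs chosen short enough that each surgery moves the curve by at most $\eta/(2g)$ in $\C^0$. The only minor differences are that the paper approximates by a \emph{piecewise} smooth curve and invokes part~(i) of Lemma~\ref{thinhandle} (requiring only smoothness of $\Gamma$ near $\alpha$), whereas you approximate by a globally smooth curve and use part~(ii); and the paper first checks whether the curve already lies in some $\B_{g_1}$ with $g_1<g$ so as to do only $g-g_1$ surgeries rather than $g$---but neither difference is substantive.
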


\begin{pf}
Fix $g>0$. It suffices to show that for any $\Gamma_0 \in \A$, and for any $\mu>0$, there exists a simple closed curve $\Gamma\in \A_g$ with
$d(\Gamma,\Gamma_0)<\mu$, where by $d(\Gamma,\Gamma_0)<\mu$ we mean that $\Gamma$ is in a $\mu$-neighborhood of $\Gamma_0$ in the $\mathcal C^0$
topology  (cf. Definition~\ref{c0rmk}).

Let $\Gamma_0\in \A$. If $\Gamma_0\in\A_g$, then we are done as we can take $\Gamma=\Gamma_0$. Hence we can assume that $\Gamma_0\notin \A_g$, which
implies that for some $g_0$, with $0\leq g_0\leq g-1$, $\Gamma_0$ belongs to $\B_{g_0}\subset\A_{g_0}$. Since $\A_{g_0}$ is open we can take
$\Gamma_1\in \A_{g_0}$, so that $\Gamma_1$ is piecewise smooth and $d(\Gamma_0, \Gamma_1)<\mu/2$.

If $\Gamma_1\in\A_g$, then we can take $\Gamma=\Gamma_1$ and we are done. Therefore, we assume that $\Gamma_1\in\B_{g_1}$, with $g>g_1\ge g_0$ and we
will  construct $\Gamma$ by $h=g-g_1$ horn surgeries on $\Gamma_1$.

Recall the construction in Lemma \ref{thinhandle}. Since $\Gamma_1$ is piecewise smooth, we can find a $\Gamma_1$-admissible and smooth path
$\alpha_1$  satisfying the hypothesis of Lemma \ref{thinhandle}, i.e. so that $\Gamma_1$ is smooth in an open neighborhood of $\a_1$. This implies
that  since $\Gamma_1\in \A_{g_1}$,  $\widetilde{\Gamma_1}_\alpha \in \A_{g_1+1}$. Let $\Gamma_2= \widetilde{\Gamma_1}_\alpha$, so that $\Gamma_2$ is
obtained by horn surgery on $\Gamma_1$ along the $\Gamma_1$-admissible path $\alpha_1$. In general we let $\Gamma_k$ to be the curve obtained by horn
surgery on $\Gamma_{k-1}$ along a $\Gamma_{k-1}$-admissible smooth path ${\alpha_k}$, satisfying the hypothesis of Lemma \ref{thinhandle}. Then
$\Gamma_k\in\A_{g_1+k-1}$, $\Gamma_{k}\sim\Gamma_{k-1}$ and since we are free to choose the admissible paths $\a_k$ as short as we want, we can take
the $\Gamma_k$'s so that, $d(\Gamma_{k-1}, \Gamma_k) < \displaystyle \frac{1}{2h}\mu$, for any given $\mu>0$. Let $\Gamma=\Gamma_{h+1}$. Then, by
construction $\Gamma_h \in  \A_{g_1+h}=\A_{g}$, and $d(\Gamma_0,\Gamma) < \frac{\mu}{2}+h\frac{\mu}{2h}=\mu$. The proof follows.
\end{pf}

\begin{figure}[b]

\relabelbox  {\epsfxsize=2in

\centerline{\epsfbox{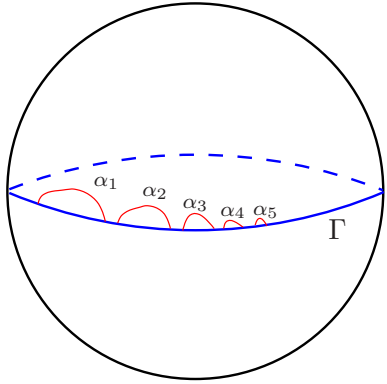}}}

\relabel{1}{$\Gamma$}

\relabel{2}{\scriptsize $\alpha_1$}

\relabel{3}{\scriptsize $\alpha_2$}

\relabel{4}{\scriptsize $\alpha_3$}

\relabel{5}{\scriptsize $\alpha_4$}

\relabel{6}{\scriptsize $\alpha_5$}

\endrelabelbox

\caption{\label{fig:figure2} \small {Construction of a rectifiable curve in $\partial M$ bounding an absolutely area minimizing surface of infinite
genus in $M$ by doing infinitely many horn surgeries.}}

\end{figure}

\begin{cor} \label{Bndense} For any $g\geq 0$, $\B_g$ is nowhere dense in $\A$.
\end{cor}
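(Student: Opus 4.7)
The plan is to deduce this as an immediate consequence of the openness result (Lemma \ref{Agopen}) and the density result (Theorem \ref{Adense}) already established for the filtration $\mathcal{A}_g$. Recall that $\mathcal{B}_g = \mathcal{A}_g \setminus \mathcal{A}_{g+1}$, so in particular
\[
\mathcal{B}_g \subset \mathcal{A} \setminus \mathcal{A}_{g+1}.
\]

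The key observation is that $\mathcal{A}_{g+1}$ is both open in $\mathcal{A}$ (Lemma \ref{Agopen} applied with $g$ replaced by $g+1$) and dense in $\mathcal{A}$ (Theorem \ref{Adense} applied with $g$ replaced by $g+1$). Therefore its complement $\mathcal{A} \setminus \mathcal{A}_{g+1}$ is closed (hence equal to its own closure) and has empty interior (since every open set in $\mathcal{A}$ meets the dense set $\mathcal{A}_{g+1}$). In other words, $\mathcal{A} \setminus \mathcal{A}_{g+1}$ is nowhere dense in $\mathcal{A}$.

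Since $\mathcal{B}_g$ is contained in this nowhere dense set, its closure $\overline{\mathcal{B}_g}$ is also contained in $\mathcal{A} \setminus \mathcal{A}_{g+1}$, and hence $\overline{\mathcal{B}_g}$ has empty interior. This is exactly the statement that $\mathcal{B}_g$ is nowhere dense in $\mathcal{A}$. There is no real obstacle here: all the work has already been done in proving openness of $\mathcal{A}_g$ and, more substantially, in the horn surgery construction that yields density of $\mathcal{A}_{g+1}$.
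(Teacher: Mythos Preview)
Your proof is correct and takes essentially the same approach as the paper: both deduce the result from the openness (Lemma \ref{Agopen}) and density (Theorem \ref{Adense}) of $\mathcal{A}_{g+1}$. Your version is in fact slightly cleaner, since you use only the containment $\mathcal{B}_g \subset \mathcal{A}\setminus\mathcal{A}_{g+1}$ rather than attempting to compute $\overline{\mathcal{B}_g}$ exactly.
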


\begin{pf} $\B_g=\A_g\setminus\A_{g+1}$ and thus $\overline\B_g = \overline\A_g\setminus\inr(A_{g+1}) =\A \setminus A_{g+1}$, since $\A_g$ is dense in $\A$ by
Theorem \ref{Adense} and $\A_{g+1}$ is open in $\A$, by Lemma \ref{Agopen}. Since $\A_{g+1}$ is also dense in $\A$,
$\inr(\overline{\B_g})=\emptyset$.
\end{pf}

\begin{rmk}\label{genusinfty} Note  that $\B_\infty:=\bigcap_g \A_g \neq \emptyset$. To see this, even though $\B_\infty=\bigcap_g \A_g$, where all the $\A_g$
are open and dense in $\A$, the classical Baire Category Theorem argument ($\bigcap_g \A_g$ is dense in $\A$)  would not work since $\A$ is not a
complete metric space. However, it is possible to construct a simple rectifiable curve $\Gamma_\infty$ in $\partial M$ which bounds an absolutely
area minimizing surface $\Sigma$ with infinite genus. To see this, take a simple closed curve $\Gamma$ in $\partial M$, and do the horn surgery
infinitely many times by choosing sufficiently small admissible paths $\{\alpha_1, \alpha_2, ..\}$ successively (See Figure 6).

Note also that since the above construction can be done on any piecewise smooth $\Gamma\in\A$, we actually have that $\B_\infty$ is dense in $\A$.
\end{rmk}

To summarize, so far in this section, we have shown that $\A=\A_0\varsupsetneq\A_1\varsupsetneq\A_2\varsupsetneq ...\varsupsetneq A_g \varsupsetneq ...$
where $\A_g$ is an open dense subset of $\A$ for any $g\geq 0$. Moreover, we have shown  that $\B_g=\A_g\setminus\A_{g+1}$ is not empty, and nowhere dense in $\A$
for any $g\geq 0$.

We finish this section with a result which states that there are simple closed curves bounding absolutely area minimizing surfaces with different
genus. In other words, we show that the relation $\mathfrak{g}:\A\to \BN$ such
that $\mathfrak{g}(\Gamma)$ is defined to be the genus of an absolutely area minimizing surface $\Sigma_\Gamma$ in $M$ with $\partial \Sigma_\Gamma=\Gamma$,
is not a function.

\begin{thm} \label{diffgenus} Let $M$ be a compact, orientable, strictly mean convex $3$-manifold. Then, for any $g\geq 0$, there exists a simple closed curve
$\Gamma$ in $\partial M$ such that $\Gamma$ bounds two different absolutely area minimizing surfaces $\Sigma_1$ and $\Sigma_2$ in $M$ with
$g=\text{genus}(\Sigma_1)\ne \text{genus}(\Sigma_2)$.
\end{thm}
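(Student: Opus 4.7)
The plan is to exhibit the required curve $\Gamma$ as a transition point along a short continuous path in $\A$ between two smooth curves of consecutive minimum genus. First, by Theorem \ref{Bgne} together with Remark \ref{Bgnermk}, select a smooth curve $\Gamma_0 \in \B_g$ bounding a smooth absolutely area minimizing surface $\Sigma_0$ of genus $g$. Applying the horn surgery of Lemma \ref{thinhandle}(ii) along a smooth $\Gamma_0$-admissible arc with sufficiently small width parameters produces a smooth curve $\Gamma_1 \in \B_{g+1}$ bounding an absolutely area minimizing surface of genus $g+1$; since the horn can be made arbitrarily small in the $\mathcal{C}^0$ topology, $\Gamma_1$ can be placed in any prescribed neighborhood $\U_{\Gamma_0}(\eta)$ of $\Gamma_0$.

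By Lemma \ref{Agopen}, $\A_g$ is open and contains $\Gamma_0$, so for $\eta$ sufficiently small $\U_{\Gamma_0}(\eta)\subset\A_g$, and within this neighborhood I would connect $\Gamma_0$ to $\Gamma_1$ by a continuous path $\{\Gamma_t\}_{t\in[0,1]}\subset\A_g$. Setting $m(t):=\varphi(\Gamma_t)$, we have $m(0)=g$, $m(1)=g+1$, $m(t)\ge g$ throughout, and $m$ is lower semicontinuous by Lemma \ref{Agopen}. Let $t^*:=\inf\{t:m(t)\ge g+1\}$. Openness of $\A_{g+1}$ gives $m(t^*)\le g$, which combined with $m(t^*)\ge g$ forces $m(t^*)=g$ exactly; in particular $\Gamma_{t^*}$ bounds a genus-$g$ absolutely area minimizing surface, call it $\wh\Sigma$. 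On the other hand, there is a sequence $t_n\downarrow t^*$ with $m(t_n)\ge g+1$; let $\Sigma_n$ be absolutely area minimizing surfaces with $\partial\Sigma_n=\Gamma_{t_n}$ and $\genus(\Sigma_n)=m(t_n)$. The Federer--Fleming compactness theorem combined with White's curvature bound (Theorem \ref{white2c}), applied exactly as in the proofs of Lemma \ref{Agopen} and Lemma \ref{bridge principle}, yields a subsequential limit $\Sigma^*$ which is an absolutely area minimizing surface with $\partial\Sigma^*=\Gamma_{t^*}$, the convergence being smooth on compact subsets of $\overline M\setminus\Gamma_{t^*}$.

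To finish, one shows that $\genus(\Sigma^*)\ge g+1$, so that $\wh\Sigma$ and $\Sigma^*$ are two distinct absolutely area minimizing surfaces bounded by $\Gamma:=\Gamma_{t^*}$ of distinct genera, one equal to $g$. This genus preservation step is the main obstacle: it should be proved by running the Gauss--Bonnet argument of Lemma \ref{Agopen} in reverse, locating a compact subset $K\subset \overline M\setminus\Gamma_{t^*}$ in which the $(g+1)$-st handle of each $\Sigma_n$ is uniformly captured, so that smooth convergence on $K$ transfers that handle to $\Sigma^*\cap K$. The delicate point to rule out is handle pinching at the boundary, the mechanism by which a reverse horn surgery collapses genus; handling this cleanly will require either a judicious choice of the path $\{\Gamma_t\}$ so that the extra handles of $\Sigma_n$ remain in a fixed interior compact set as $t_n\downarrow t^*$, or an indirect contradiction argument exploiting the strict stratification $\A_g\supsetneq\A_{g+1}$ already established in this section.
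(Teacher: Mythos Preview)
Your overall strategy—locating a transition point along a one-parameter family connecting a curve in $\B_g$ to one in $\B_{g+1}$—is exactly the paper's. The gap you single out is real and is the crux of the argument: with only $\C^0$ convergence $\Gamma_{t_n}\to\Gamma_{t^*}$, Theorem~\ref{white2c} yields smooth convergence of the $\Sigma_n$ only on compact subsets of $\overline M\setminus\Gamma_{t^*}$, so handles can escape into the boundary and the inequality $\genus(\Sigma^*)\ge g+1$ does not follow. Your second suggested route (an indirect contradiction from the strict stratification) does not appear to lead anywhere.

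The paper closes the gap via your first suggestion, and the ``judicious path'' is the one already built into the horn surgery. Instead of an arbitrary $\C^0$ homotopy, parametrize by the width $\delta\in(0,c]$ of the \emph{second} bridge: with $\epsilon_0$ fixed, foliate the strip $S^{\epsilon_0}_\alpha$ by smooth arcs $\{\tau_\delta\}$ and set $\widetilde\Gamma^\delta_\alpha=\Gamma_0\sharp S^{\epsilon_0}_\alpha\sharp S^\delta_\tau$, so that $\widetilde\Gamma^c_\alpha=\Gamma_0\in\B_g$ while $\widetilde\Gamma^\delta_\alpha\in\B_{g+1}$ for all small $\delta$ by Lemma~\ref{thinhandle}. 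These curves are all smooth and, as $\delta_i\to\delta\in(0,c]$, converge in $C^{2,\alpha}$; the boundary clause of Theorem~\ref{white2c} then gives uniform curvature bounds up to the boundary, hence smooth convergence of the minimizers \emph{globally}. Gauss--Bonnet now gives exact genus preservation, $\lim_i\genus(\widetilde\Sigma^{\delta_i}_\alpha)=\genus(\widetilde\Sigma^\delta_\alpha)$, which is stronger than the one-sided bound you were seeking and sidesteps the handle-pinching issue entirely. Setting $\delta_1=\inf\{\delta:\widetilde\Gamma^\delta_\alpha\in\B_g\}\in(0,c)$, one approaches $\delta_1$ from above through parameters with $\widetilde\Gamma^\delta_\alpha\in\B_g$ to obtain a genus-$g$ minimizer, and from below (where $\widetilde\Gamma^\delta_\alpha\notin\B_g$) to obtain a minimizer of genus $\neq g$, both bounded by $\Gamma:=\widetilde\Gamma^{\delta_1}_\alpha$.
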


\begin{pf} Let $\Gamma$ be a smooth curve in $\B_g$ (cf. Theorem \ref{Bgne}, Remark \ref{Bgnermk}). Let $\alpha$ be a smooth and  $\Gamma$-admissible path
in $\partial M$. Then, by Lemma \ref{bridge principle}, for any $\eta_0>0$, there is an $\epsilon_0>0$ such that any absolutely area minimizing
surface $\Sigma_\alpha$ with boundary $\widehat{\Gamma}^{\epsilon_0}_\alpha$, is $\eta_0$-close to the surface $\Sigma\cup S^{\epsilon_0}_\alpha$, for
some absolutely area minimizing surface $\Sigma$, with $\partial\Sigma=\Gamma$ (where $\widehat\Gamma_\a^{\e_0}, S_\a^{\e_0}$ are as in
Definition~\ref{wtG}). Recall that since $\Gamma$ is smooth, we consider this bridge construction so that $\widehat\Gamma_\a^{\e_0}$ is smooth (cf.
Remark \ref{smoothglue}). Now foliate the strip $S^{\epsilon_0}_\alpha$ with arcs $\{\tau_{\delta}\}_{|\delta|\leq c}$ where $\tau_\delta$ is
parametrized by the arclength of $\alpha$, with $|\alpha|=2c$, such that $\tau_0 = \tau$ and $\tau_{\pm c} \subset \Gamma$. We also construct the
arcs $\tau_\d$, so that they are smooth and hence after the horn surgery, the curves $\widetilde{\Gamma}^\d_\alpha =\Gamma\sharp S_\a^{\e_0}\sharp
S_\tau^\delta=(\widehat{\Gamma}^{\epsilon_0}_\alpha \setminus (\gamma^1_\delta \cup \gamma^2_\delta)) \cup (\tau_{\d} \cup \tau_{-\d})$ are smooth,
where  $S^\delta_\tau$ represents the strip in $S^{\epsilon_0}_\alpha$ separated by the arcs $\tau_\delta$ and $\tau_{-\delta}$ and  $\partial
S_\tau^\delta=\gamma^1_\delta \cup \gamma^2_\delta\cup\tau_{\d}\cup\tau_{-\d}$ (cf. Definition~\ref{hs}, Remark \ref{smoothglueII}). Then
$\widetilde{\Gamma}^c_\alpha = \Gamma$ and $\widetilde{\Gamma}^0_\alpha =\widehat{\Gamma}^{\epsilon_o}_\alpha$.

%



%


%




%

Now we note the following:
\begin{itemize}
\item[(i)] Let $\d\in(0,c]$, $\{\d_i\}_{i\in\N}$ be a sequence in $[0, c]$ such that $\d_i\to\d$ and let $\wt\Sigma_\a^{\d_i}$ be a sequence of absolutely area
minimizing surfaces with $\partial\wt\Sigma_\a^{\d_i}=\wt\Gamma_\a^{\d_i}$. Then, arguing as in Lemma \ref{bridge principle} (see in particular Remark \ref{C2convrmk}), with
the use of the compactness theorem in \cite{FeF} and White's curvature estimate, Theorem \ref{white2c}, we have that after passing to a subsequence, $\wt\Sigma_\a^{\d_i}$ converges smoothly
to $\wt\Sigma_\a^{\d}$, where $\wt\Sigma_\a^{\d}$ is an absolutely area minimizing surface with $\partial\wt\Sigma_\a^{\d}=\wt\Gamma_\a^{\d}$. This, along with the Gauss
Bonnet theorem implies that $\lim_i\genus(\wt\Sigma_\a^{\d_i})=\genus(\wt\Sigma_\a^{\d})$.
\item[(ii)]By Lemma \ref{thinhandle}, there is a $\delta_0>0$ such that for any $0<\delta<\delta_0$, $\widetilde{\Gamma}^\delta_\alpha$ is a smooth curve in
$\B_{g+1}$ and on the other hand we know that $\widetilde{\Gamma}^c_\alpha=\Gamma\in\B_g$. Moreover, by (i) we have that $\d_0<c$.
\end{itemize}

Let $\d_1$ be the infimum of all $\d\in[0,c]$, such that $\widetilde{\Gamma}^\d_\alpha\in\B_g$. By (i) and (ii) above we know that
$\d_1\in[\d_0,c)\subset(0,c)$. Let now $\Gamma_0=\wt{\Gamma}^{\d_1}_\alpha$. Then $\Gamma_0$ is a smooth curve and $\Gamma_0\ne\Gamma$. Applying (i),
with a sequence $\d_i\downarrow \d_1$ and a sequence $\wt\Sigma_\a^{\d_i}$ of absolutely area minimizing surfaces with genus equal to $g$ (which we can do since we can pick the $\d_i$'s, so that for each $i$, $\wt\Gamma^{\d_i}_\a\in\B_g$), we conclude that there exists an absolutely area minimizing surface $\Sigma_1$ with $\partial\Sigma_1=\Gamma_0$ and
$\genus(\Sigma_1)=g$. Applying now (i), with a sequence $\d_i\uparrow \d_1$ and a sequence $\wt\Sigma_\a^{\d_i}$ of absolutely area minimizing surfaces with genus not equal to $g$ (which we can do since for each $i$, $\wt\Gamma^{\d_i}_\a\notin\B_g$), we conclude there
exists an absolutely area minimizing surface $\Sigma_2$ with $\partial\Sigma_2=\Gamma_0$ and $\genus(\Sigma_2)\ne g$. Hence, this curve $\Gamma_0$,
satisfies the hypothesis of the Theorem.
\end{pf}

\section{Results on Minimal Surfaces in Mean Convex Domains}\label{minsurfaces section}

In this section, we will give interesting results on uniqueness, embeddedness and the genus of minimal surfaces which are properly embedded in a mean
convex $3$-manifold. In particular, we generalize the examples of Peter Hall \cite{Ha} in the unit $3$-ball $\BB^3$, addressing the questions by Bill
Meeks, to any strictly mean convex $3$-manifold.

First, by using the results of the previous section, we show that the space $\CC$ of simple closed curves in $\partial M$ bounding more than one
minimal surface in $M$ is generic in the set of simple nullhomologous closed curves in $\partial M$, where $M$ is a strictly mean convex
$3$-manifold.

Before giving the rigorous proof of this theorem, we give an outline of the main argument used. If $\partial M$ is homeomorphic to $S^2$, then the
theorem follows directly from the fact that $\A_1$ is open dense in $\A$, and  $\A_1\subset \CC$. This is because any simple closed curve $\Gamma$
would be nullhomotopic in $M$ as $\partial M \simeq S^2$, and by the result of Meeks and Yau (see Theorem \ref{meeksyau}), $\Gamma$ bounds an area
minimizing disk $D$ in $M$. Hence, any simple closed curve $\Gamma$ in $\A_1$ automatically bounds two different minimal surfaces where one of them
is the absolutely area minimizing surface $\Sigma$ with genus $\geq 1$, and the other one is the area minimizing disk $D$. The same argument works in
general with a slight modification. In particular, we will show that for an open dense subset $\CC$, any simple closed curve $\Gamma$ in this subset
bounds an absolutely area minimizing surface $\Sigma$ in $M$ where $\Sigma$ is not the smallest genus surface which $\Gamma$ bounds in $M$. By a
Theorem of White, which we have stated in Theorem \ref{white2}, there is an area minimizing surface $S$ of smallest genus. This implies $\Gamma$
bounds at least two different minimal surfaces, where one of them is the absolutely area minimizing surface $\Sigma$, and the other one is the area
minimizing representative $S$ of the smallest genus surfaces which $\Gamma$ bounds in $M$.

\begin{thm}\label{nonuniquecurves} {\bf (Curves bounding more than one minimal surface are generic)}

Let $M$ be a strictly mean convex $3$-manifold. Let $\A$ be the space of nullhomologous simple closed curves in $\partial M$ equipped with the
$\mathcal C^0$ topology. Let $\mathfrak{C}\subset \A$ represent the simple closed curves in $\partial M$ bounding more than one embedded stable
minimal surface in $M$. Then, $\mathfrak{C}$ is generic in $\A$ in the sense that $\mathfrak{C}$ contains an open dense subset of $\A$.
\end{thm}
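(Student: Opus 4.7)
The plan is to exhibit an open dense subset $\mathcal{D}\subset\A$ on which every curve already bounds two distinct embedded stable minimal surfaces, so $\CC\supseteq\mathcal{D}$. To detect such curves I introduce a topological companion to $\varphi$, namely $\psi:\A\to\N$ defined by $\psi(\Gamma)=\min\genus(S)$, with the minimum taken over embedded orientable surfaces $S\subset M$ with $\partial S=\Gamma$; this is finite since $\Gamma$ is nullhomologous, and clearly $\psi\le\varphi$. My candidate is
\[\mathcal{D}=\{\Gamma\in\A:\psi(\Gamma)<\varphi(\Gamma)\}.\]
The inclusion $\mathcal{D}\subset\CC$ is exactly the argument sketched in the paragraph before the theorem: for $\Gamma\in\mathcal{D}$, Theorem \ref{white2} applied with $g=\psi(\Gamma)$ gives $a(g)<a(g-1)=\infty$, so the infimum is attained by a smooth embedded stable minimal surface $S$ of genus $\psi(\Gamma)$, while Theorem \ref{absmincur} furnishes the smooth embedded absolutely area minimizing $\Sigma$ of genus $\varphi(\Gamma)>\psi(\Gamma)$; these have different genera, hence are distinct.

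The key structural ingredient, and the one that will require the most care, is local constancy of $\psi$ in the $\C^0$ topology on $\A$. The argument I have in mind is as follows: if $\Gamma'\in\A$ lies in a sufficiently small $\C^0$ neighborhood of $\Gamma$, so that by Definition \ref{c0rmk} we have $\Gamma'\subset N_\e(\Gamma)$ with $N_\e(\Gamma)$ an annulus and $\Gamma'\sim\Gamma$ in $\partial M$, then $\Gamma$ and $\Gamma'$ cobound a thin embedded annulus $A\subset\partial M$. Given any embedded Seifert surface $S$ of genus $g$ for $\Gamma$ in $M$, gluing on $A$ (and pushing a small collar into the interior of $M$ to restore embeddedness) produces an embedded Seifert surface of the same genus $g$ for $\Gamma'$, so $\psi(\Gamma')\le\psi(\Gamma)$; by symmetry $\psi(\Gamma)=\psi(\Gamma')$.

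Granting local constancy of $\psi$, openness of $\mathcal{D}$ is immediate: if $\Gamma_0\in\mathcal{D}$ and $g_\varphi:=\varphi(\Gamma_0)$, I take a $\C^0$ neighborhood of $\Gamma_0$ small enough that $\psi$ is constant there and that it is contained in the open set $\A_{g_\varphi}$ provided by Lemma \ref{Agopen}; on this neighborhood $\psi\equiv\psi(\Gamma_0)<g_\varphi\le\varphi$. For density, given $\Gamma_0\in\A$ and $\mu>0$, either $\Gamma_0\in\mathcal{D}$ and we are done, or $\psi(\Gamma_0)=\varphi(\Gamma_0)=g_0$; in the latter case, shrinking $\mu$ so that any $\mu$-close curve is in the local-constancy regime, I apply Theorem \ref{Adense} (density of $\A_{g_0+1}$) to obtain $\Gamma\in\A_{g_0+1}$ with $\C^0$-distance less than $\mu$ from $\Gamma_0$. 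Then $\psi(\Gamma)=g_0<g_0+1\le\varphi(\Gamma)$, so $\Gamma\in\mathcal{D}$. Thus $\mathcal{D}$ is open and dense in $\A$ and contained in $\CC$, which proves the theorem. Once local constancy of $\psi$ is in place, everything else is bookkeeping based only on Lemma \ref{Agopen}, Theorem \ref{Adense}, and White's Theorem \ref{white2}.
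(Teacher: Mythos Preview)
Your proposal is correct and is essentially the paper's own argument: your set $\mathcal{D}=\{\psi<\varphi\}$ coincides with the paper's $\mathfrak{C}'=\bigcup_{g>0}\{\Gamma\in\A_g:\Gamma\text{ bounds an area minimizing surface of genus }\le g-1\}$, and your ``local constancy of $\psi$'' is exactly the annulus-attachment step the paper uses (``$\partial(\Sigma\cup A_\e)=\Gamma$ implies $\wt g_\Gamma\le g_0$'') in both its density and openness paragraphs. One small point: your invocation of Theorem~\ref{white2} with $a(g-1)=\infty$ only makes sense when $\psi(\Gamma)\ge 1$; when $\psi(\Gamma)=0$ you should (as the paper does) appeal to Meeks--Yau, Theorem~\ref{meeksyau}, for the embedded area minimizing disk.
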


\begin{rmk}\label{Crmk}
In particular we show the following: Let AAM$(\Gamma)$, AM$(\Gamma)$ be the set of Absolutely Area Minimizing and that of Area Minimizing surfaces
respectively bounded by $\Gamma$. Then the set
\[\{\Gamma\in\A:\exists \Sigma_1\in \text{AAM}(\Gamma), \Sigma_2\in \text{AM}(\Gamma):\Sigma_1\ne\Sigma_2\}\]
is open and dense in $\A$.
\end{rmk}

\begin{pf} We first show that $\mathfrak{C}$ is dense in $\A$. Let $\Gamma_0$ be any nullhomologous simple closed curve in $\partial M$. It suffices to show that
for any $\e>0$, there exists $\Gamma\in\mathfrak C$, such that $d(\Gamma, \Gamma_0)<\e$,  where by $d(\Gamma,\Gamma_0)<\e$ we mean that $\Gamma$ is
in a $\e$-neighborhood  of $\Gamma_0$ in the $\mathcal C^0$ topology (cf. Definition \ref{c0rmk}).

Assume that $g_0$ is the minimum genus of a surface bounded by $\Gamma_0$. By Theorem \ref{white2}, there exists an area minimizing
surface $\Sigma_{g_0}$ of  genus $g_0$. If $\Gamma_0\notin \B_{g_0}$, then $\Gamma_0$ bounds an absolutely area minimizing surface $\Sigma$ which has
to be different from $\Sigma_0$. In this case $\Gamma_0$ bounds at least two different stable minimal surfaces, namely $\Sigma_{g_0}$ and
$\Sigma$, so we can take $\Gamma=\Gamma_0$. Hence we can assume that $\Gamma_0\in\B_{g_0}$.

Given $g>g_0$,
by Theorem \ref{Adense}, there exists $\Gamma\in \A_g$, so that $d (\Gamma_0, \Gamma)<\e$ and for $\e$ small enough $\Gamma$ is homotopic to $\Gamma_0$. We denote the
annulus in $\partial M$ between $\Gamma$ and $\Gamma_0$ by $A_\e$, as we will need it later.

We claim that $\Gamma$ bounds two different stable minimal surfaces. Note first that since $\Gamma\in \A_g$ and $g> g_0$, it bounds a stable minimal surface
$\Sigma_1$ of genus bigger than $g_0$ (for example take an absolutely area minimizing surface that has minimum genus).
To show the existence of the second  surface, assume first that $\Gamma$ is nullhomotopic. Then, by the theorem of Meeks and Yau, Theorem \ref{meeksyau},  $\Gamma$ bounds a stable
minimal disk $\Sigma_2$.
Assume now that $\Gamma$ is not nullhomotopic, i.e. it bounds no disk. Let $\wt g_\Gamma$ be the minimum genus of a surface bounded by $\Gamma$. i.e.
\[ \wt g_\Gamma =\min\{g \ | \ \Gamma \text{ bounds a surface of genus g}\}. \]
We have that $0<\wt g_\Gamma \le g_0$; the second inequality being true because $\partial(\Sigma\cup A_\e)=\Gamma$, where $\Sigma$ is an area
minimizing surface of genus $g_0$ and with $\partial\Sigma=\Gamma_0$. This implies that the minimum genus of a surface bounded by $\Gamma$ is less than or equal to
$g_0$. By Theorem \ref{white2}, there exists an area minimizing surface $\Sigma_2$ of genus $\wt g_\Gamma$, which is therefore a stable minimal surface .

We have actually showed that the set
\[\mathfrak{C}'=\displaystyle\bigcup_{g>0}\{\Gamma\in\A_g \ | \ \Gamma \text{ bounds an area minimizing surface of genus }\le g-1\}\subset\mathfrak C\]
is dense in $\A$. Next we show that $\mathfrak{C}'$ is also open.

Let $\Gamma_0\in\mathfrak C'$. Then for some $g>0$, $\Gamma_0\in\A_g$ and it bounds a minimal surface of genus $\le g-1$. Since $\A_g$ is open, there
is an $\e>0$ such that for any $\Gamma\in\A$ with $d(\Gamma_0, \Gamma)<\e$, $\Gamma$ is homotopic to $\Gamma_0$ and belongs in $\A_g$.

Now, we can argue as before to show that any $\Gamma$ with $d(\Gamma, \Gamma_0)<\e$ bounds two minimal surfaces. In particular we have the following: since $\Gamma\in \A_g$, it bounds
an area minimizing surface $\Sigma_1$ of genus $\ge g$. If $\Gamma$ is nullhomotopic in $M$, then, as before, by Theorem \ref{meeksyau}, it bounds an area minimizing disk
$\Sigma_2$. If $\Gamma$ is not nullhomotopic, then we let
\[\wt g_\Gamma=\min\{g \ | \ \Gamma \text{ bounds a surface of genus g}\}.\]
We have that $0<\wt g_\Gamma\le g-1$ as before, and by Theorem \ref{white2}, there exists an area minimizing surface $\Sigma_2$ of genus $\wt g_\Gamma$.

This shows $\mathfrak{C}'$ is open and dense in $\A$. Since $\mathfrak{C}'\subset \mathfrak{C}$, the proof follows.
\end{pf}

\begin{rmk} Note that by \cite{Co1} and \cite{CE}, a generic simple closed curve in $\partial M$ bounds a unique area minimizing disk,
and similarly, a generic simple closed curve in $\partial M$ bounds a unique absolutely area minimizing surface in $M$. The result above shows that
when we relax the condition being area minimizing to just minimal, the situation is opposite. In other words, a generic simple closed curve bounds a
unique absolutely area minimizing surface, while a generic simple closed curve bounds more than one minimal surface. Hence the situations for minimal
and absolutely area minimizing surfaces are quite different.
\end{rmk}

Now, we generalize Peter Hall's examples to a more general setting. In \cite{Ha}, Peter Hall gave examples of nonembedded stable minimal surfaces in
the unit ball $\BB^3$ of $\BR^3$. To generalize this result to any strictly mean convex $3$-manifold $M$, we first show that there are disjoint
simple closed curves $\Gamma_1$ and $\Gamma_2$ in $\partial M$ and stable minimal surfaces $S_1$ and $S_2$ in $M$ with $\partial S_i = \Gamma_i$, $i=1,2$, that are
 not disjoint, i.e. $S_1\cap S_2 \neq \emptyset$.

\begin{thm} \label{intersectingminsurfaces}{\bf (Disjoint extreme curves may not bound disjoint minimal surfaces)}
Let $M$ be a strictly mean convex $3$-manifold. Then, there are stable minimal surfaces $S_1$ and $S_2$ in $M$ with $S_1\cap S_2 \neq
\emptyset$ and $\partial S_1 \cap \partial S_2 = \emptyset$. Moreover, these surfaces can be chosen to be disks.
\end{thm}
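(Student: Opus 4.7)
The strategy is to produce a single nullhomotopic simple closed curve $\Gamma_0\subset\partial M$ that bounds two distinct stable minimal surfaces with transversal interior intersection in $M$, and then split $\Gamma_0$ into two disjoint nearby parallel curves; by smooth convergence and the stability of transversal intersections under perturbation the interior intersection will persist while the boundaries become disjoint.

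I first construct $\Gamma_0$. Take a small smooth nullhomotopic curve $\gamma_0\in\B_0$ bounding a tiny stable minimal disk (via Meeks--Yau, Theorem \ref{meeksyau}), and apply horn surgery (Lemma \ref{thinhandle}) along a short smooth $\gamma_0$-admissible path supported in a small ball on $\partial M$. By Lemma \ref{thinhandle}(ii) the resulting smooth curve $\Gamma_0$ lies in $\B_1$, and nullhomotopy in $M$ is preserved since the surgery is local. By Meeks--Yau, $\Gamma_0$ bounds an embedded area minimizing disk $D$; and by definition of $\B_1$ it bounds an embedded absolutely area minimizing genus-$1$ surface $\Sigma$. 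Since $\Gamma_0\notin\B_0$, we have the strict inequality $|\Sigma|<|D|$.

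The main obstacle is to verify that $D$ and $\Sigma$ meet transversally in the interior of $M$, and not merely along $\Gamma_0$. Along the smooth common boundary $\Gamma_0$, the Hopf boundary strong maximum principle for minimal surfaces prevents $D$ and $\Sigma$ from being tangent at any point (otherwise unique continuation would force $D=\Sigma$, contradicting different genera); hence $D$ and $\Sigma$ cross $\Gamma_0$ transversally. To force interior intersection, suppose for contradiction that $D\cap\Sigma=\Gamma_0$. Using the strict area gap $|\Sigma|<|D|$ together with a cut-and-paste surgery performed on a small neighborhood of an arc of $\Gamma_0$, where $D$ and $\Sigma$ form a transverse wedge, one exchanges a small region of $D$ for the corresponding piece of $\Sigma$; this produces a topological disk with boundary $\Gamma_0$ and area strictly less than $|D|$, contradicting that $D$ minimizes area among disks bounded by $\Gamma_0$. (This max-principle plus surgery argument is the technical heart of the proof.)

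Once the transversal interior intersection $D\cap\Sigma\neq\emptyset$ is established, take two disjoint smooth nullhomotopic parallel curves $\Gamma_1,\Gamma_2$ in a thin annular neighborhood of $\Gamma_0$ in $\partial M$. By Meeks--Yau, $\Gamma_1$ bounds an area minimizing disk $D_1$; and since $\Gamma_0\in\B_1$ implies $a(1)<a(0)$ (and this inequality is an open condition), Theorem \ref{white2} furnishes an area minimizing genus-$1$ surface $S_2$ with $\partial S_2=\Gamma_2$. Running a Federer--Fleming compactness argument combined with White's curvature estimate (Theorem \ref{white2c}), exactly as in the proof of Lemma \ref{Agopen}, along sequences $\Gamma_1^n,\Gamma_2^n\to\Gamma_0$ yields $D_1^n\to D$ and $S_2^n\to\Sigma$ smoothly on compact subsets of $\overline M\setminus\Gamma_0$. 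Since transversal intersections are stable under smooth perturbations, for $\Gamma_1,\Gamma_2$ sufficiently close to $\Gamma_0$ we obtain $D_1\cap S_2\neq\emptyset$ with $\partial D_1\cap\partial S_2=\Gamma_1\cap\Gamma_2=\emptyset$. For the ``moreover'' clause that $S_1,S_2$ can both be chosen to be disks, the same blueprint works starting from a $\Gamma_0$ bounding two distinct stable minimal \emph{disks} meeting transversally; such $\Gamma_0$ is produced by Theorem \ref{nonuniquecurves} and Remark \ref{Crmk} applied in the nullhomotopic regime, combined with the same Hopf maximum principle and cut-and-paste argument to force transversal interior intersection, and then the push-off argument plus Meeks--Yau delivers the required pair of stable minimal disks with disjoint boundaries.
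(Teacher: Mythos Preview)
Your proposal has a genuine gap at its self-identified ``technical heart.'' The cut-and-paste you describe to force an interior intersection of $D$ and $\Sigma$ is not a valid argument. If $D\cap\Sigma=\Gamma_0$, there is no common interior curve along which to perform an exchange; the only shared locus is the boundary arc. Near an arc of $\Gamma_0$ the two surfaces are half-disks sharing that arc but with \emph{different} free edges inside $M$, so ``exchanging a small region of $D$ for the corresponding piece of $\Sigma$'' does not produce a surface with boundary $\Gamma_0$ at all, let alone a disk. Even if some local swap made sense, the global inequality $|\Sigma|<|D|$ says nothing about the comparison of small pieces near $\Gamma_0$, so you cannot conclude the competitor has smaller area. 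In fact nothing in your setup rules out the possibility that $\Sigma$ lies entirely on one side of $D$ with $D\cap\Sigma=\Gamma_0$; the Hopf boundary lemma gives transversality along $\Gamma_0$ but does not force interior intersection. Your disk case has a second gap: Theorem \ref{nonuniquecurves} and Remark \ref{Crmk} produce one absolutely area minimizing surface and one area minimizing surface of strictly smaller genus, not two distinct stable minimal \emph{disks} with the same boundary, so the push-off cannot be started there either.

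The paper avoids ever having to show that a specific pair of surfaces intersects. It argues by contradiction: if the theorem failed, then for a one-parameter family $\{\Gamma_t\}_{t\in[-\e,\e]}$ foliating an annulus in $\partial M$ (lying in the open set supplied by Theorem \ref{nonuniquecurves}), the two distinct stable minimal surfaces $\Sigma_t^{\pm}$ bounded by each $\Gamma_t$ would enclose pairwise disjoint regions $N_t$ of positive volume, contradicting $|M|<\infty$. For the disk statement the paper works inside a small mean convex ball $\Omega$: each $\Gamma_t\in\A_1$ bounds a genus $\ge 1$ absolutely area minimizing surface $\Sigma_t\subset\Omega$ which separates $\Omega$ into two mean convex pieces, and Meeks--Yau in each piece yields two \emph{distinct} area minimizing disks $D_t^{\pm}$; the same volume-counting contradiction then applies.
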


\begin{pf} We argue by contradiction using the technique in \cite[Theorem 3.2]{Co1}, along with Theorem \ref{nonuniquecurves}

Assume that the theorem is not true. By Theorem \ref{nonuniquecurves}, there exists $\Gamma_0\in\A$ and $\e>0$, such that any $\Gamma\in \A$ in an
$\e$-neighborhood of $\Gamma_0$ is homotopic to $\Gamma_0$ and bounds two distinct stable minimal surfaces. Taking $\e$ small enough,
$A_{\Gamma_0}=\{x\in\partial M: \dist(x, \Gamma_0)<\e\}$ is an annulus and we foliate this annulus by curves $\Gamma_t\in\A$, $t\in [-\e, \e]$. Each
$\Gamma_t$, by the choice of $\e$, bounds 2 stable minimal surfaces $\Sigma_t^+, \Sigma_t^-$. If $H_2(M,\BZ)=0$, then both $\Sigma_t^+$ and
$\Sigma_t^-$ are separating in $M$ and so there is a region {\em between} them. We let $N_t$ be that region, i.e. $N_t$ is such that $\partial
N_t=\Sigma_t^+\cup \Sigma_t^-$ as in \cite{Co1}. Then for $t_1< t_2$, $N_{t_1}\cap N_{t_2}=\emptyset$. This is because either $N_{t_1}\subset
N_{t_2}$, which is impossible since $\Gamma_{t_1}\cap\Gamma_{t_2} =\emptyset$, or else $\partial N_{t_1}\cap
\partial N_{t_2}\neq \emptyset$ which cannot hold because of our assumption that the theorem is false. Let $N=\cup_t N_t$. Then, for any $t\in [-\e, \e]$,
$|N_t|>0$ where $|N_t|$ now represents the volume of $N_t$, and $\sum |N_t|=| N| \le |M| <\infty$. This implies that $|N_t|\neq 0$ for at most
countably many $t$. But when $|N_t|=0$ and since $\Sigma^\pm_t$ are smooth, this implies $\Sigma_t^+=\Sigma_t ^-$. This is a contradiction as we have
assumed that for any $t$, $\Gamma_t$ bounds two distinct stable minimal surfaces $\Sigma_t^+$ and $\Sigma_t^-$. Hence for some $t_1<t_2$, we
must have $\partial N_{t_1}\cap \partial N_{t_2}\neq \emptyset$, i.e. $(\Sigma_{t_1}^+\cup\Sigma_{t_1}^-)\cap (\Sigma_{t_2}^+\cup\Sigma_{t_2}^-)\neq
\emptyset$ even  though $\Gamma_{t_1}\cap\Gamma_{t_2} =\emptyset$. The proof follows.

For the case $H_2(M,\BZ) \neq 0$, a similar argument from \cite{CE} works with a slight modification. If $H_2(M,\BZ) \neq 0$, then the surfaces
$\Sigma_t^+$ and $\Sigma_t^-$ may not be separating in $M$, and we cannot talk about the region $N_t$ {\em between} them. However, in this case, we
can restrict ourselves to a small neighborhood of $\Gamma_0$ in $M$, say $T_{\Gamma_0}$, which is a very thin solid torus with $T_{\Gamma_0} \cap
\partial M = A_{\Gamma_0}$. Now, the argument in the previous paragraph works, if we replace our ambient manifold $M$ with $T_{\Gamma_0}$ and the surfaces
$\Sigma^\pm_t$ with $S^\pm_t = \Sigma^\pm_t \cap T_{\Gamma_0}$. Then for any $t$, $S^+_t$ and $S^-_t$  are separating in $T_{\Gamma_0}$, and if we
assume that the theorem is  false, then $(S^+_t\cup S^-_t)\cap (S^+_s\cup S^-_s) = \emptyset$ for any $s\neq t$, since $S^\pm_t \subset
\Sigma^\pm_t$. Hence, by defining $N_t$ as the region between $S^+_t$ and $S^-_t$ in $T_{\Gamma_0}$, the whole summation argument goes through, and
we get a contradiction in this case, too. This proves the existence of intersecting stable minimal surfaces with disjoint boundaries.

Finally, we need to show that these surfaces can be chosen to be disks. For this we work as follows. We want to prove that there are stable
minimal disks $D_1$ and $D_2$ in $M$ with $D_1\cap D_2 \neq \emptyset$ and $\partial D_1 \cap \partial D_2 = \emptyset$. Let $p$ be a point in the
smooth part of $\partial M$. Let $\epsilon>0$ be sufficiently small so that $\overline{B}_\epsilon(p)$ is strictly mean convex (i.e. $\epsilon< \rho$
where $\rho$ is the convexity radius at $p$). Let $\gamma\in \B_0$ with $\gamma \subset \partial M\cap B_\epsilon(p)$ and let $E$ be the absolutely
area minimizing surface, which is a disk, with $\partial E=\gamma$. By construction, $E$ separates $M$ into two parts and we let $\Omega$ be the
``small'' part containing $p$. Then, $\Omega$ is topologically a $3$-ball, and any properly embedded surface would be separating in $\Omega$. Let
$\Gamma_0$ be in $\A_1$ and such that it has an annular  neighborhood $A_{\Gamma_0}$, for which $A_{\Gamma_0} \subset \Omega \cap \partial M$. Since
$\A_1$ is open by Theorem \ref{Agopen} (and by taking the annular neighborhood small enough), we can foliate $A_{\Gamma_0}$ with a collection
$\{\Gamma_t\}\in \A_1$ with $t\in [-\e, \e]$. Then for any $t\in [-\e, \e]$, the absolutely area minimizing surface $\Sigma_t$ with boundary
$\Gamma_t$ lies in $\Omega$, because $E$ is absolutely area minimizing surface and by \cite{Co1}, $E\cap\Sigma_t=\emptyset$. $\Sigma_t$ separates
$\Omega$ into two parts, say $\Omega_t^+$ (the part containing $p$) and $\Omega_t^-$. By construction, both $\Omega_t^+$ and $\Omega_t^-$ are mean convex.
Moreover, $\Gamma_t$ is nullhomotopic in both $\Omega_t^+$ and $\Omega_t^-$ as $\partial \Omega$ is a sphere. Hence, by the theorem of Meeks and Yau, Theorem \ref{meeksyau}, there are area
minimizing disks $D^+_t$ and $D^-_t$  in $\Omega_t^+$ and $\Omega_t^-$ respectively. Since $\Sigma_t$ has genus greater than or equal to one,
$D^\pm_t\neq \Sigma_t$, and hence $D^+_t\neq D^-_t$. This shows that for any $t\in [-\e, \e]$, $\Gamma_t$ bounds two stable minimal disks $D^+_t$ and
$D^-_t$ in $M$.

Assume that $D_t^\pm \cap D^\pm_s = \emptyset$ for any $t\neq s$. Then, we define $N_t$ as the region between $D^+_t$ and $D^-_t$ as in the first
paragraph. Notice that by assumption, $N_t\cap N_s =\emptyset$ and by construction, $|N_t|>0$ for any $t$. Then, the argument in the first paragraph
gives a contradiction as before and thus the proof follows.
\end{pf}

\begin{rmk} The result  of Theorem \ref{intersectingminsurfaces} is interesting even for the case when $M$ is a strictly mean convex $3$-manifold with $H_2(M,\BZ) = 0$. For such $M$, any two properly
embedded area minimizing surfaces $\Sigma_1$ and $\Sigma_2$ are disjoint, provided that their boundaries $\Gamma_1$ and $\Gamma_2$ in $\partial M$
are disjoint by the Meeks-Yau exchange roundoff trick \cite[Lemma 4.1]{Co1}. However, this theorem shows that if one relax the condition being area
minimizing to just being minimal, this is no longer true.
\end{rmk}

\begin{rmk}\label{manyintms} Note that the proof of Theorem \ref{intersectingminsurfaces} also shows that for any simple closed curve $\Gamma$ in $\partial M$,
it is possible to find {\em nearby} curves $\Gamma_1$ and $\Gamma_2$ such that $\Gamma_1\cap \Gamma_2 =\emptyset$ while $S_1\cap S_2 \neq \emptyset$
where $S_i$, $i=1,2$ is a stable minimal surface with boundary $\Gamma_i$. In particular, we can find such curves $\Gamma_1$ and $\Gamma_2$
in an $\e$-neighborhood of $\Gamma$, for any $\e$, such that the $\e$-neighborhood of $\Gamma$ in $\partial M$ is an annulus and moreover if $\Gamma$ is smooth, then we can find such curves $\Gamma_1, \Gamma_2$ so that they are also smooth. Furthermore, by the
proof of Theorem \ref{nonuniquecurves}, it is easy to see that we can take $S_2$ to be absolutely area minimizing and $S_1$ to be area minimizing of
genus $g$, where $g$ is less than or equal to the minimum genus of a surface bounded by $\Gamma$.
\end{rmk}

Now, we need a version of a bridge principle to construct a nonembedded stable minimal surface in a strictly mean convex $3$-manifold whose boundary is
a simple closed curve.

\begin{lem}\label{bridgeprincipleforextremecurves} {\bf (Bridge Principle for Extreme Curves)}

Let $M$ be a strictly mean convex $3$-manifold, and $\Gamma_1$ and $\Gamma_2$ be two disjoint smooth simple closed curves bounding two
stable minimal surfaces $S_1$ and $S_2$ respectively. Let $\alpha$ be a smooth path in $\partial M$ connecting $\Gamma_1$ and $\Gamma_2$, as in
\emph{Definition \ref{a2}}. Then,  for any $\e>0$, there is a stable minimal surface $S_1\sharp_\alpha S_2$ which is $\e$-close to $S_1\cup
S_2 \cup \alpha$.
\end{lem}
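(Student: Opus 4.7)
The plan is to apply White's bridge principle, Theorem \ref{white4}, to the disconnected minimal surface $\Sigma := S_1 \cup S_2$ with boundary $\Gamma := \Gamma_1 \cup \Gamma_2$. By Remark \ref{rmkonwhite4} the theorem applies in this (disconnected, possibly immersed) setting, and the path $\alpha$ satisfies hypothesis (b) of Theorem \ref{bridge principle} (Definition \ref{a2}). Taking the trivial constant sequence $\Sigma_n := \Sigma$, $\Gamma_n := \Gamma$, conclusion (ii) of Theorem \ref{white4} would produce, for every sufficiently small $\delta > 0$, a strictly stable minimal surface $T_\delta$ with $\partial T_\delta = \widehat{\Gamma}^\delta_\alpha$, satisfying $T_\delta \to \Sigma$ smoothly on compact subsets of $\overline M \setminus \alpha$ and $|T_\delta| \to |\Sigma|$. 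Choosing $\delta$ small enough, $T_\delta$ would be $\e$-close to $\Sigma \cup \alpha$ in Hausdorff distance, furnishing the desired $S_1 \sharp_\alpha S_2$.

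The main obstacle is that Theorem \ref{white4} requires $\Sigma$ to be \emph{strictly} stable, whereas $S_1, S_2$ are only assumed stable. To secure strict stability I would use a perturbation argument. Foliate small annular neighborhoods of $\Gamma_i$ in $\partial M$ by smooth curves $\{\Gamma_i^s\}_{|s| < s_0}$ with $\Gamma_i^0 = \Gamma_i$. Standard compactness (Federer-Fleming together with the curvature estimate Theorem \ref{white2c}, as in the proof of Lemma \ref{Agopen}) yields, for each small $s$, a stable minimal surface $S_i^s$ with $\partial S_i^s = \Gamma_i^s$ and $S_i^s \to S_i$ smoothly as $s \to 0$. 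A genericity argument on the Jacobi operator of the family shows that the set of parameters $s$ for which $S_i^s$ fails to be strictly stable is nowhere dense: were it to contain an interval, integrating the resulting continuous family of Jacobi fields would produce nontrivial normal deformations of $S_i^s$ to distinct minimal surfaces sharing the same boundary on a positive-measure set of parameters, contradicting the transversality of the boundary foliation $\{\Gamma_i^s\}$ combined with the compactness for stable minimal surfaces. Alternatively, one may perform a small perturbation of the ambient metric supported near $\Sigma$ to directly achieve strict stability.

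Having produced, for arbitrarily small $s$, strictly stable approximations $\Sigma^s := S_1^s \cup S_2^s$, choose $s$ small enough so that $\Sigma^s$ is $\e/2$-close to $\Sigma$ in Hausdorff distance, and pick a smooth path $\alpha^s$ close to $\alpha$ connecting $\Gamma_1^s$ and $\Gamma_2^s$. Applying Theorem \ref{white4} to $\Sigma^s$ with the trivial constant sequence $\Sigma_n := \Sigma^s$, one obtains for sufficiently small $\delta > 0$ a strictly stable minimal surface $T$ with $\partial T = \widehat{\Gamma}^{\delta,s}_{\alpha^s}$ that is $\e/2$-close to $\Sigma^s \cup \alpha^s$. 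By the triangle inequality $T$ is $\e$-close to $\Sigma \cup \alpha$, and its stability is guaranteed directly by conclusion (ii) of Theorem \ref{white4}. Setting $S_1 \sharp_\alpha S_2 := T$ completes the proof.
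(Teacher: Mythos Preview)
Your approach differs from the paper's and carries a genuine gap. The paper does not invoke White's bridge principle (Theorem \ref{white4}) here at all; it simply cites the Meeks--Yau bridge principle \cite[Theorem 7]{MY3}, which applies directly to \emph{stable} minimal surfaces without any strict-stability hypothesis, and observes that strict mean convexity of $M$ allows the bridge to be taken inside $\partial M$. This disposes of the lemma in one line.

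Your detour through Theorem \ref{white4} forces you to manufacture strictly stable approximations $S_i^s$, and this step is not justified. The compactness argument you cite from Lemma \ref{Agopen} works for \emph{absolutely area minimizing} surfaces: one perturbs the boundary, takes minimizers for the new boundaries, and passes to a limit. Here $S_1,S_2$ are merely stable minimal, so there is no variational principle producing surfaces $S_i^s$ with $\partial S_i^s=\Gamma_i^s$ that are guaranteed to exist, let alone to converge back to $S_i$ rather than to some other minimal surface (or to nothing at all). Your ``genericity of the Jacobi operator'' sketch does not repair this: it presupposes the existence of the family $\{S_i^s\}$ before arguing about its stability properties. The metric-perturbation alternative changes the ambient manifold and hence the statement being proved. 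In short, the reduction to strict stability is where the real content would lie, and it is exactly what the Meeks--Yau result \cite{MY3} already supplies.
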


\begin{pf} This lemma is a direct application of the Meeks and Yau Bridge principle \cite[Theorem 7]{MY3}, noting only that in their proof one can pick the bridge to lie on $\partial M$, since $M$ is strictly convex.
\end{pf}

Finally, by using the previous result, and by the bridge principle above, we generalize Peter Hall's examples of nonembedded stable minimal surfaces
in $\BB^3$ to any strictly mean convex $3$-manifold $M$. In particular, we show that for any strictly mean convex $3$-manifold $M$, there exists a
simple closed curve $\Gamma$ in $\partial M$ which bounds a nonembedded stable minimal surface $S$ in $M$.

\begin{thm}\label{nonembeg} {\bf (Nonembedded Stable Minimal Surfaces)}

Let $M$ be a strictly mean convex $3$-manifold. Then, there is a simple closed curve $\Gamma$ in $\partial M$ such that $\Gamma$ bounds a nonembedded
stable minimal surface $S$ in $M$. Furthermore  there is a simple closed curve $\Gamma$ in $\partial M$ such that $\Gamma$ bounds a nonembedded
stable minimal disk.
\end{thm}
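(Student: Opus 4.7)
My plan is to combine Theorem~\ref{intersectingminsurfaces} with the bridge principle, Lemma~\ref{bridgeprincipleforextremecurves}, in a manner completely analogous to Hall's original construction in $\mathbb{B}^3$. The idea is that once we have two disjoint boundary curves whose minimal spanning surfaces \emph{must} intersect inside $M$, connecting the boundaries by a thin bridge along $\partial M$ (carefully placed away from the intersection set) should yield a new curve whose stable minimal spanning surface is a small perturbation of the union of the two old surfaces plus the bridge. Since the old surfaces have a genuine interior intersection, this perturbation inherits a self-intersection, forcing non-embeddedness.

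Concretely, I would first apply Theorem~\ref{intersectingminsurfaces} to obtain disjoint smooth simple closed curves $\Gamma_1,\Gamma_2\subset\partial M$ and stable minimal surfaces $S_1,S_2\subset M$ with $\partial S_i=\Gamma_i$ and $S_1\cap S_2\neq\emptyset$; for the second (disk) assertion I invoke the last sentence of Theorem~\ref{intersectingminsurfaces} to take $S_1,S_2$ to be disks. Pick an interior point $p\in S_1\cap S_2$, and select a smooth path $\alpha\subset\partial M$ connecting $\Gamma_1$ and $\Gamma_2$ (in the sense of Definition~\ref{a2}) whose endpoints and whose entire strip $S_\alpha^\varepsilon$ lie well away from a fixed geodesic ball $B(p,r)$ in $M$. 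Form $\widehat\Gamma^\varepsilon_\alpha=\Gamma_1\cup\Gamma_2\sharp S_\alpha^\varepsilon$ as in Definition~\ref{wtG2}, so that $\widehat\Gamma^\varepsilon_\alpha$ is a single smooth simple closed curve in $\partial M$.

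Next, apply Lemma~\ref{bridgeprincipleforextremecurves} to produce, for every sufficiently small $\eta>0$, a stable minimal surface $S=S_1\sharp_\alpha S_2$ with $\partial S=\widehat\Gamma^\varepsilon_\alpha$ that is $\eta$-close (in Hausdorff distance) to $S_1\cup S_2\cup\alpha$. Topologically $S$ is obtained by bridging $S_1$ and $S_2$ along $\alpha$, hence if $S_1,S_2$ are disks then $S$ is a disk as well. Take $\Gamma=\widehat\Gamma^\varepsilon_\alpha$.

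The remaining task, which I expect to be the real (though mild) obstacle, is to verify that $S$ is not embedded. Inside the ball $B(p,r)$, the surface $S$ looks like a small $C^{\infty}$-perturbation of $S_1\cup S_2$: this follows because the bridge $S_\alpha^\varepsilon$ and its neighborhood are disjoint from $B(p,r)$, so in this region the convergence of $S$ (as $\eta\to 0$) to $S_1\cup S_2$ is smooth, by the same curvature estimate and monotonicity argument used in Remark~\ref{C2convrmk}. At $p$ either $S_1\pitchfork S_2$ (in which case any sufficiently $C^1$-small perturbation still intersects transversely near $p$) or not; in the non-transverse case we may pick $p$ to lie on a component of $S_1\cap S_2$ where transversality does hold, which exists generically, or else perturb one of $\Gamma_1,\Gamma_2$ slightly within Remark~\ref{manyintms} to arrange transversal intersection while preserving $\Gamma_1\cap\Gamma_2=\emptyset$ and $S_1\cap S_2\neq\emptyset$. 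Hence for $\eta$ small enough $S\cap B(p,r)$ contains a point of self-intersection, so $S$ is a non-embedded stable minimal surface (a stable minimal disk in the disk case) bounded by the smooth simple closed curve $\Gamma\subset\partial M$, completing the proof.
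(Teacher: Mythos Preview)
Your proposal is correct and follows essentially the same route as the paper: invoke Theorem~\ref{intersectingminsurfaces} (and its disk version) to get intersecting stable minimal surfaces with disjoint smooth boundaries, then apply the bridge principle of Lemma~\ref{bridgeprincipleforextremecurves} and use transversality of the intersection to force a self-intersection in the bridged surface. The one place where the paper is sharper is the transversality step: rather than searching for a transversal intersection point or perturbing a boundary curve, the paper simply observes that two distinct minimal surfaces cannot be tangent at an interior point (local graph representation plus the maximum principle), so \emph{every} interior intersection of $S_1$ and $S_2$ is automatically transversal---this replaces your somewhat hedged discussion of the non-transverse case.
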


\begin{pf}
Take $S_1, S_2$ as in Theorem \ref{intersectingminsurfaces}, so that $\partial S_1,\partial S_2$ are smooth (cf. Remark \ref{manyintms}). Then $S_1,
S_2$ intersect transversally, (using a local graphical representation and the the maximum principle one sees that they cannot be tangent). Now, apply
Lemma \ref{bridgeprincipleforextremecurves} to construct a minimal surface that is $\e$-close to $S_1\cup S_2\cup\alpha$, where $\alpha$ is a curve
connecting $\partial S_1$ and $\partial S_2$. Since $S_1, S_2$ intersect transversally, for $\e$ small enough the new surface has at least one self
intersection. For the disk case, take the surfaces as disks as in Theorem \ref{intersectingminsurfaces}. The proof follows.
\end{pf}

\begin{rmk} \label{manycurvesremark} Note that using Remark \ref{manyintms}, we actually have many curves satisfying Theorem \ref{nonembeg}. In particular, given a smooth curve
$\Gamma_0\in\A$ and $\e$, such that  the $\e$-neighborhood of $\Gamma_0$ in $\partial M$ is an annulus, there exists $\Gamma$ in the
$\e$-neighborhood of $\Gamma_0$,  satisfying Theorem \ref{nonembeg}.
\end{rmk}

\begin{rmk} Again, when we compare this result with the area minimizing case, we see that this cannot happen in the area minimizing case. By the
regularity results of \cite{ASSi}, any area minimizing surface $\Sigma$ in $M$ bounded by a simple closed curve $\Gamma$ in $\partial M$
must be smooth in the interior. Hence, if we relax the condition being area minimizing to being just minimal, we see that this
is no longer true.
\end{rmk}

\section{Applications to Curves in $\BR^3$}

In this section we apply our results  to simple closed curves in $\BR^3$. So far, we discussed the simple closed curves in the
boundary of a strictly mean convex $3$-manifold, and proved several results about the area minimizing surfaces  in the manifold that these curves bound. Now, we
change our focus to the simple closed curves in $\BR^3$ and give extensions of these results to this case.

We use the notation from the previous sections. In particular, $\A$ denotes the space of simple closed curves in $\BRR$. $\A_g\subset \A$ denotes the
space of simple closed curves in $\BR^3$ such that the minimum genus of the absolutely area minimizing surfaces that they bound is greater than or
equal to $g$. $\B_g$ denotes the space of simple closed curves in $\partial M$ such that the minimum genus of the absolutely area minimizing surfaces
that they bound is exactly $g$. i.e. $\B_g=\A_g-\A_{g+1}$. Note that $\A_0=\A$ because of the existence of absolutely area minimizing surfaces with boundary any given
simple closed curve in $\BR^3$ \cite{Fe}. Furthermore we let $\A^k$ denote the $\C^k$ smooth simple closed curves in $\BR^3$ and we let $\A^k_g$ be $\A^k\cap\A_g$.

First, we extend Lemma \ref{Agopen}  and  Theorem \ref{Adense} to the case of curves in $\BR^3$.
\begin{thm}\label{AgopenR3}
For any $g\geq 0$, $\A_g$ is open and dense in $\A$ equipped with the $\C^0$ topology. Moreover, for any $k\geq 0$, $\A^k_g$ is open and dense in $\A^k$ equipped with
$\C^0$ topology.
\end{thm}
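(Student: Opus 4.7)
The plan is to extend the proofs of Lemma \ref{Agopen} (openness) and Theorem \ref{Adense} (density) from curves in $\partial M$ to curves in $\BRR$. The key observation is that any simple closed curve $\Gamma_0$, its $\mathcal C^0$-small perturbations, and all the absolutely area minimizing surfaces they bound (by the convex hull property) lie inside some large ball $B_R\subset\BRR$. Since $B_R$ is itself a compact strictly mean convex 3-manifold, all the technical tools used in Section 3 (Federer-Fleming compactness, monotonicity, the White curvature estimate of Theorem \ref{white2c}, the boundary regularity of \cite{H}) remain available.

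For the openness of $\A_g$, I would mimic the proof of Lemma \ref{Agopen}. Suppose for contradiction there is a sequence $\Gamma_i\to\Gamma_0$ in $\mathcal C^0$ with absolutely area minimizing surfaces $\Sigma_i$ of genus $\le g-1$. The annular region $A_\epsilon\subset\partial M$ used in Lemma \ref{Agopen} would be replaced here by a ruled band in $\BRR$ joining $\Gamma_i$ to $\Gamma_0$, whose area tends to zero with $i$; this yields the uniform mass bound $|\Sigma_i|\le|\Sigma_0|+o(1)$ where $\Sigma_0$ is a fixed absolutely area minimizing surface with boundary $\Gamma_0$. Federer-Fleming compactness, applied inside $B_R$, gives after subsequence a limit $[\![\Sigma_0']\!]$ with $\Sigma_0'$ absolutely area minimizing and $\partial\Sigma_0'=\Gamma_0$, hence $\genus(\Sigma_0')\ge g$. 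White's estimate together with the monotonicity formula gives smooth convergence on compact subsets of $\BRR\setminus\Gamma_0$, and Gauss-Bonnet then forces $\genus(\Sigma_i)\ge g$ for large $i$, a contradiction.

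For the density, I would adapt the horn surgery construction of Section~\ref{hssection}. The bridge principle, Lemma \ref{bridge principle}, for absolutely area minimizing surfaces in $\BRR$ is proved by the same contradiction argument, now carried out inside a large strictly mean convex ball that contains everything. Given $\Gamma_0\in\A\setminus\A_g$, a thin bridge is attached along a smooth arc $\alpha\subset\BRR$ with endpoints on $\Gamma_0$; the admissibility condition from Definition \ref{a} is reformulated intrinsically using a small topological disk $D$ containing $\alpha\cup(\Gamma_0\cap N_\epsilon(\alpha))$, with $D$ playing the role locally of $\partial M$ and the two endpoints required to sit on the same local side of $\Gamma_0$ in $D$. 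The thin handle analog of Lemma \ref{thinhandle} then goes through, and iterating as in the proof of Theorem \ref{Adense} produces a curve in $\A_g$ arbitrarily $\mathcal C^0$-close to $\Gamma_0$. The $\mathcal C^k$ version follows because each horn surgery can be performed smoothly (cf.\ Remarks \ref{smoothglue}, \ref{smoothglueII}), so starting from a $\mathcal C^k$ curve all intermediate and final curves remain $\mathcal C^k$.

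The main obstacle is decoupling the horn surgery construction from the ambient boundary $\partial M$, on which the admissibility of a bridge path was originally defined via the two sides of $\Gamma$ in $\partial M$. Once one replaces $\partial M$ near $\alpha$ by a locally chosen small topological disk containing the relevant portion of $\Gamma_0\cup\alpha$, all the orientation and gluing arguments carry over unchanged, and the bridge principle, thin handle lemma, and density iteration transfer verbatim to $\BRR$.
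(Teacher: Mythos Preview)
Your openness argument is essentially the paper's. For density, however, the paper takes a different route, and your direct approach has a genuine gap.

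The paper does \emph{not} redo the horn surgery in $\BR^3$. Given $\Gamma$ with absolutely area minimizing $\Sigma$, it invokes a result of Meeks--Yau \cite[p.~159, Corollary~1]{MY3} to produce proper subsurfaces $\Sigma_i\subsetneq\Sigma$ together with strictly mean convex neighborhoods $N_i$ of $\Sigma_i$ in $\BR^3$ satisfying $\Gamma_i:=\partial\Sigma_i\subset\partial N_i$. Since each $N_i$ is itself a compact strictly mean convex $3$-manifold with $\Gamma_i$ in its boundary, Theorem~\ref{Adense} applies directly inside $N_i$, producing curves $\alpha_i^j\in\A_g(N_i)$ converging to $\Gamma_i$. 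The only new work is then to show that a surface absolutely area minimizing in $N_i$ with boundary $\alpha_i^j$ is in fact absolutely area minimizing in all of $\BR^3$; this is done by first observing that $\Sigma_i$ is the \emph{unique} absolutely area minimizing surface in $\BR^3$ with boundary $\Gamma_i$ (via interior regularity), and then using a nearest-point projection onto $N_i$ from a slightly larger neighborhood, which does not increase area because $N_i$ is strictly mean convex.

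Your plan to perform the horn surgery directly in $\BR^3$ using a ``locally chosen small topological disk $D$'' does not transfer verbatim. The strict mean convexity of $\partial M$ in Section~3 is not merely a convenient surface on which to draw curves: it is a barrier, forcing every absolutely area minimizing $\Sigma$ to be properly embedded, so that the bridge $S_\alpha^\epsilon\subset\partial M$ lies off $\Sigma$ and, crucially, so that $\alpha$ is automatically non-tangential to the tangent halfspace of $\Sigma$ at its endpoints. This non-tangentiality is precisely the hypothesis required in White's bridge principle \cite{Wh4} (the paper itself flags this in Section~6), and the genus computation of Lemma~\ref{genuslem} and Corollary~\ref{genuscor}, hence the thin-handle Lemma~\ref{thinhandle}, rests on it. An arbitrary local disk $D$ in $\BR^3$ has no such barrier property: $\Sigma$ may well be tangent to $D$ along $\Gamma$ (consider a planar $\Gamma$), and even if you arrange $\alpha$ transverse to one particular $\Sigma$, you must do so uniformly for every absolutely area minimizing limit that can arise in the compactness argument. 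You address none of this, so the assertion that ``the bridge principle, thin handle lemma, and density iteration transfer verbatim to $\BRR$'' is not justified.
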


\begin{pf} First, we show that $\A_g$ is open in $\A$. The proof of this fact is the same as the proof of Lemma \ref{Agopen}, with the only difference that here $\mathcal U _{\Gamma_0}(\e)$ is an $\e$-neighborhood  of $\Gamma_0$ in $\BR^3$ with respect to the $\mathcal C^0$ topology; whereas in the proof of Lemma \ref{Agopen} we restricted this neighborhood in $\partial M$; the boundary of the mean convex manifold $M$. Restricting ourselves now to $\C^k$ curves shows that $\A^k_g$ is open in
$\A^k$ with respect to the $\C^0$ topology.

Now, we show that $\A_g$ is dense in $\A$. Let $\Gamma$ be a simple closed curve in $\BRR$. Let $\Sigma$ be the absolutely area minimizing
surface in $\BRR$ with $\partial \Sigma = \Gamma$. Then, by \cite[Page 159, Corollary 1]{MY3} , for a sequence $\Sigma_i$ of proper subsurfaces
($\Sigma_i \varsubsetneq \Sigma$) with $\Sigma_i\to \Sigma$, there are strictly mean convex neighborhoods $N_i$ of $\Sigma_i$ in $\BRR$ such that
$\Gamma_i :=\partial \Sigma_i \subset \partial N_i$.

For a set $X$, let $\A(X)$ denote the space of simple closed curves in $\partial X$ and let $\A_g(X)$ denote the space of simple closed curves in $\partial X$ such that the minimum genus of the absolutely area minimizing surfaces that
they bound in $X$ is greater than or equal to $g$. With this notation, $\A_g=\A_g(\BR^3)$. Since $N_i$ is strictly mean convex, by Theorem \ref{Adense},
$\A_g(N_i)$ is dense in $\A(N_i)$, the space of simple closed curves in $\partial N_i$. Hence, for each $i$, there is a sequence
$\{\alpha^j_i\}_{j\in\N}$ of simple closed curves in $\A_g(N _i)$ such that $\alpha^j_i \stackrel{j\to\infty}\longrightarrow \Gamma_i$, with respect to the $\mathcal C^0$ topology, where recall that $\Gamma_i=\partial \Sigma_i\subset \partial
N_i$.

Let $S^j_i$ be an absolutely area minimizing surface in $N_i$ with boundary $\partial S^j_i=\alpha^j_i$ and with $\genus (S^j_i)\geq g$. Notice that
$S^j_i$ is absolutely area minimizing in $N_i$, and this does not necessarily imply that it is also absolutely area minimizing in $\BR^3$. We
claim however that for any $i>0$, there is a $j_0>0$, such that for any $j\ge j_0$, $S^j_i$ is also absolutely area minimizing in $\BR^3$. Hence, when we prove the claim, the
density of $\A_g$ in $\A$ follows.

First, note that $\Sigma_i$ is the unique absolutely area minimizing surface in $\BR^3$ with $\partial \Sigma_i = \Gamma_i$. This because if there
was another absolutely area minimizing surface $\Sigma'_i$ in $\BR^3$ with $\partial \Sigma_i' = \Gamma_i$, then $\Sigma'=(\Sigma\setminus\Sigma_i)\cup
\Sigma_i'$ would be an absolutely area minimizing surface with $\partial \Sigma'=\Gamma$, since $|\Sigma_i|=|\Sigma_i'|$. However, $\Sigma'$ then would have a
singularity along $\Gamma_i$, and this contradicts to the regularity theorem for absolutely area minimizing surfaces \cite{Fe}.

Now, fix $i_0>0$. Assume that the above claim is not true and hence there is a sequence $\{j_k\}_{k\in\N}$ with  $j_k\stackrel{k\to\infty}{\longrightarrow}\infty$, such that $S^{j_k}_{i_0}$ is not absolutely area minimizing surface in $\BR^3$ for any $k\in\N$. Let $T^{j_k}_{i_0}$ be the absolutely area
minimizing surface in $\BR^3$ with $\partial T^{j_k}_{i_0}=\alpha^{j_k}_{i_0}$. Since $\alpha^{j_k}_{i_0}\stackrel{k\to\infty}{\longrightarrow}\Gamma_{i_0}$, and $\Sigma_{i_0}$ is the unique
absolutely area minimizing surface in $\BR^3$ with $\partial \Sigma_{i_0}=\Gamma_{i_0}$, then after passing to a subsequence if necessary,
$T^{j_k}_{i_0}\stackrel{k\to\infty}{\longrightarrow}\Sigma_{i_0}$, where the convergence is with respect to measure and to Hausdorff distance. Since $N_{i_0}$ is strictly mean convex, there is a neighborhood $N^\delta$ of $N_{i_0}$ in $\BR^3$ such
that the nearest point projection map $\pi:N^\delta \to N_{i_0}$ does not increase the area, i.e. $|\pi(S)|\leq |S|$ for a surface $S$ in
$N^\delta$. Since, $T^{j_k}_{i_0}\stackrel{k\to\infty}{\longrightarrow}\Sigma_{i_0}$ in Hausdorff distance, there exists a $k_0>0$ such that for any $k\ge k_0$, $T^{j_k}_{i_0}\subset N^\delta$. Hence,
$|\pi(T^{j_k}_{i_0})|\leq |T^{j_k}_{i_0}|$, and since $\pi(T^{j_k}_{i_0})\subset N_{i_0}$ with $\partial \pi(T^{j_k}_{i_0})=\alpha^{j_k}_{i_0}$, $|S^{j_k}_{i_0}|\leq
|\pi(T^{j_k}_{i_0})|\leq |T^{j_k}_{i_0}|$. This shows that $S^{j_k}_{i_0}$ is also absolutely area minimizing in $\BR^3$ for $k\ge k_0$, and the claim follows with $j_0=j_{k_0}$.

This shows that for any $i>0$, there is a $j_0>0$ such that $\alpha^j_i\in \A_g(\BR^3)$ for all $j\ge j_0$. Hence, $\A_g$ is not only open, but also dense in $\A$ in
$\C^0$ topology. Notice that if the initial curve $\Gamma$ is smooth ($\C^k$ smooth), then we can also take the curves $\alpha^j_i$ to be as smooth as $\Gamma$ by Lemma \ref{thinhandle}. This implies that $\A^k_g$ is dense in $\A^k$
in the $\C^0$ topology.
\end{pf}

\begin{rmk} \label{rmkAgopenR3} Note that from the proof of Theorem \ref{AgopenR3} we can conclude that $\A^k_g$ is open in $\A^k$ with respect to the $\C^k$ topology. However, the statement that $\A^k_g$ is dense in
$\A^k$ with respect to the $\C^k$ topology is far from being true. To see that, let $\gamma$ be a $\C^2$ simple closed curve in $\BR^3$, that is furthermore contained in a plain. Then, by \cite{Me}, there is a $\C^2$
neighborhood $N(\gamma)$ of $\gamma$ such that for any $\gamma'\in N(\gamma)$, there exists a unique minimal surface $S$ in $\BR^3$ with $\partial S
=\gamma'$, and $S$ is a graph over the plane (hence a disk). This shows that $\gamma\in \inr(\B_0)$, and thus $\A_1$ (or any $\A_g$ for $g\ge 1$) is not dense in $\A$ in the $\C^2$ topology.
The main reason why the techniques of this paper do not generalize to the smooth topology is that the horn surgery produces curves that are ``close'' to the original curve in the
$\C^0$ topology but not in any $\C^k$ topology for $k\ge 2$.
\end{rmk}

Next, we apply the generic non-uniqueness result of  Section \ref{minsurfaces section} to  curves in $\BR^3$. Let $\mathfrak C$ be the space of
simple closed curves in $\BR^3$ bounding more than one embedded stable minimal surface. We show that $\mathfrak{C}$ is dense in $\A$ with respect to
the $\C^0$ topology.

\begin{thm}\label{nonuniqueR3}{\bf (Curves bounding more than one minimal surface are dense)}
Let $\A$ be the space of simple closed curves in $\BR^3$ equipped with the $\C^0$ topology. Let $\mathfrak{C}\subset \A$ represent the simple closed
curves in $\partial M$ bounding more than one embedded stable minimal surface in $\BR^3$. Then, $\mathfrak{C}$ is dense in $\A$ with respect to the
$\C^0$ topology. Moreover, for any $k\geq 0$, $\CC\cap\A^k$ is dense in $\A^k$ with respect to the $\C^0$ topology.
\end{thm}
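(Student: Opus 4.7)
\begin{pf}[Proof Proposal]
The strategy is to transplant the generic non-uniqueness result for strictly mean convex $3$-manifolds (Theorem \ref{nonuniquecurves}) to $\R^3$ by exactly the same mean convex neighborhood/retraction construction used to prove Theorem \ref{AgopenR3}. Given $\Gamma \in \A$ (resp.\ $\Gamma \in \A^k$) and $\e > 0$, the goal is to produce a curve $\alpha$ with $d(\alpha, \Gamma) < \e$ in the $\C^0$ topology (and, in the smooth case, with $\alpha \in \A^k$) that bounds two distinct embedded stable minimal surfaces in $\R^3$.

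First I let $\Sigma$ be an absolutely area minimizing surface in $\R^3$ with $\partial \Sigma = \Gamma$. Following the first step of the proof of Theorem \ref{AgopenR3}, I choose a proper subsurface $\Sigma_0 \varsubsetneq \Sigma$ with $\Gamma_0 := \partial \Sigma_0$ sufficiently close to $\Gamma$ that $d(\Gamma_0, \Gamma) < \e/2$, and by \cite[p.~159, Corollary~1]{MY3} I obtain a strictly mean convex neighborhood $N \subset \R^3$ of $\Sigma_0$ with $\Gamma_0 \subset \partial N$. Next I apply Theorem \ref{nonuniquecurves} to the strictly mean convex $3$-manifold $N$; more precisely, I use the open dense subset $\mathfrak{C}'(N) \subset \A(\partial N)$ exhibited in its proof, consisting of those curves in $\partial N$ that bound, inside $N$, an absolutely area minimizing surface $A$ of genus strictly larger than the minimum genus of a surface they bound in $N$, so that an area minimizing representative $B$ of that smaller genus (guaranteed by Theorem \ref{white2}) is distinct from $A$. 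Density of $\mathfrak{C}'(N)$ in $\A(\partial N)$ allows me to pick $\alpha \in \mathfrak{C}'(N)$ with $d(\alpha, \Gamma_0) < \e/2$, so that $d(\alpha, \Gamma) < \e$. When $\Gamma \in \A^k$, I additionally arrange that $\Sigma_0$, hence $\Gamma_0$, is smooth, and then the horn surgery construction in Theorem \ref{nonuniquecurves} produces a smooth $\alpha$ via Lemma \ref{thinhandle}(ii), so $\alpha \in \A^k$ as required.

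It remains to verify that this $\alpha$ still bounds two distinct embedded stable minimal surfaces in $\R^3$. The surface $B$ is stable, minimal, and embedded in $N \subset \R^3$; since these three properties are local, $B$ is also a stable embedded minimal surface in $\R^3$. For the surface $A$, the key point is to show that $A$, which is only a priori absolutely area minimizing in $N$, is in fact absolutely area minimizing in all of $\R^3$. Here I repeat the retraction argument from the proof of Theorem \ref{AgopenR3}: the regularity theorem \cite{Fe} forces $\Sigma_0$ to be the \emph{unique} absolutely area minimizing surface in $\R^3$ spanning $\Gamma_0$ (otherwise replacing $\Sigma_0$ by another such surface inside $\Sigma$ would produce a singular absolutely area minimizing surface spanning $\Gamma$). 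Hence if $T$ denotes the $\R^3$-absolutely area minimizing surface spanning $\alpha$, then along any sequence of such $\alpha$'s converging to $\Gamma_0$ we get $T \to \Sigma_0$, so eventually $T$ lies in a tubular neighborhood of $N$ on which the nearest point projection $\pi$ to $N$ does not increase area; the inequality $|A| \leq |\pi(T)| \leq |T| \leq |A|$ then forces $A$ to be absolutely area minimizing in $\R^3$, and in particular stable, embedded, and minimal there. Finally $A \neq B$ because they have different genera by construction, so $\alpha \in \mathfrak{C}$. The main technical obstacle is precisely this last retraction step, and in particular ordering the two approximation scales correctly: one must first fix $\Sigma_0$ close enough to $\Sigma$ to set up $N$, and then take $\alpha$ close enough to $\Gamma_0$ (depending on $N$) that $T$ lies in the tubular neighborhood where $\pi$ is defined.
\end{pf}
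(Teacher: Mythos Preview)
Your proposal is correct and follows essentially the same route as the paper: pass to a strictly mean convex neighborhood $N$ of a proper subsurface $\Sigma_0 \varsubsetneq \Sigma$ via \cite[p.~159, Corollary~1]{MY3}, apply Theorem~\ref{nonuniquecurves} inside $N$ to obtain a nearby curve $\alpha$ bounding two distinct embedded stable minimal surfaces $A,B$ in $N$, and then transfer to $\R^3$.

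The one substantive difference is in how you handle the transfer of $A$. For $B$ you correctly note that embeddedness, minimality, and stability are local, so $B$ is automatically an embedded stable minimal surface in $\R^3$. The paper applies \emph{exactly this observation to both surfaces}: any embedded stable minimal surface in $N$ is an embedded stable minimal surface in $\R^3$, and that is all the theorem requires (namely $\alpha \in \mathfrak{C}$). You instead devote the final paragraph to reproducing the retraction/uniqueness argument from Theorem~\ref{AgopenR3} in order to upgrade $A$ from ``absolutely area minimizing in $N$'' to ``absolutely area minimizing in $\R^3$''. This is correct, and it does buy you a slightly stronger conclusion (placing $\alpha$ in the smaller set $\widehat{\mathfrak{C}}$ the paper defines but does not actually need), but for the stated theorem it is superfluous: the locality remark you made for $B$ applies verbatim to $A$, and the delicate ordering of approximation scales you flag as ``the main technical obstacle'' is then not needed at all.
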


\begin{pf} Let $\wh{\CC}=\{\Gamma\in\A:\exists \Sigma_1\in \text{AAM}(\Gamma), \Sigma_2\in \text{AM}(\Gamma):\Sigma_1\ne\Sigma_2\}$, where similarly to
Theorem \ref{nonuniquecurves}, we let AAM$(\Gamma)$ and AM$(\Gamma)$ be the set of Absolutely Area Minimizing and Area minimizing surfaces
respectively bounded by $\Gamma$. Clearly $\wh{\CC}\subset\CC$. Hence, if we show that $\wh{\CC}$ is dense in $\A$, we are done.

Let $\Gamma , \Sigma , \Sigma_i$ and $N_i$ be as in the proof of  Theorem \ref{AgopenR3}, i.e. $\Gamma\in\A$, $\Sigma\in $AAM$(\Gamma)$,
$\Sigma_i\subsetneq \Sigma$, with $\Sigma_i\to\Sigma$ and $N_i$ are strictly mean convex neighborhoods of $\Sigma_i$ such that
$\Gamma_i:=\partial\Sigma_i\subset\partial N_i$. Analogously with $\A(X)$, as defined in the proof of Theorem \ref{AgopenR3}, for a set $X$, we let
$\CC(X)$ represent the curves in $\A(X)$ that bound more than one stable minimal surface in $X$. Since $N_i$ is strictly mean convex, Theorem
\ref{nonuniquecurves} implies that $\CC(N_i)$ is open and dense in $\A(N_i)$. The embedded minimal surfaces in $N(\Sigma_i)$ are also minimal in
$\BRR$. The smooth case is similar. The proof follows.
\end{pf}

\begin{rmk} Notice that  Theorem \ref{nonuniquecurves} says not only density, but also genericity (containing open dense subset) of $\CC$ in $\A$,
when the ambient manifold $M$ is strictly mean convex and the curves are in $\partial M$. Here, genericity of $\CC$ for the curves in $\BR^3$ cannot
be proved with the techniques of Theorem \ref{nonuniquecurves} as White's result in \cite{Wh2} (cf. Theorem \ref{white2}) is for strictly mean convex manifolds; this theorem
 gives us an embedded minimal surface of smallest genus when the manifold is strictly mean convex.

On the other hand, instead of Theorem \ref{white2}, one might want to consider \cite[Corollary 2.1]{Jo} to get a minimal representative of the
smallest genus surface, and generalize these arguments as it applies to $\BR^n$. However, Jost's result gives us an \underline{immersed} (not
necessarily embedded) least area representative of the smallest genus surface the curve bounds (which is always a disk in our case). Hence, we cannot
use it here, as we are looking for embedded minimal surfaces.
\end{rmk}

Finally, we will generalize Theorem \ref{nonembeg} to  simple closed curves in $\BR^3$.

\begin{thm} For any simple closed curve $\Gamma$ in $\BRR$ and for any $\e>0$, there is a smooth simple closed curve $\Gamma'$ in $N_\e(\Gamma)$; an $\e$-neighborhood of $\Gamma$, such that
$\Gamma'$ bounds a nonembedded stable minimal surface in $\BRR$.
\end{thm}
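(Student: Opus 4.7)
The plan is to reduce to the strictly mean convex case already handled by Theorem \ref{nonembeg}, using the same Meeks--Yau device that powered the proofs of Theorems \ref{AgopenR3} and \ref{nonuniqueR3}. Given $\Gamma \subset \BRR$ and $\e>0$, let $\Sigma$ be an absolutely area minimizing surface with $\partial \Sigma = \Gamma$ (existence by \cite{Fe}). Since $\Sigma$ is smooth in its interior by the regularity of \cite{Fe, ASSi}, we may pick a smooth proper subsurface $\Sigma_0 \subsetneq \Sigma$ whose smooth boundary $\Gamma_0 = \partial \Sigma_0$ lies inside $N_{\e/2}(\Gamma)$; for instance, $\Sigma_0$ can be obtained by removing from $\Sigma$ a thin smooth collar of $\Gamma$ inside $\Sigma$. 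By \cite[Page 159, Corollary 1]{MY3}, there exists a strictly mean convex neighborhood $N$ of $\Sigma_0$ in $\BRR$ with $\Gamma_0 \subset \partial N$.

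Next I would apply Theorem \ref{nonembeg}, in the strengthened form given by Remark \ref{manycurvesremark}, to the strictly mean convex $3$-manifold $N$ with the smooth curve $\Gamma_0 \in \A(N)$. Choosing $\e$ small enough so that an $(\e/2)$-neighborhood of $\Gamma_0$ in $\partial N$ is an annulus (possible since both $\Gamma_0$ and $\partial N$ are smooth), Remark \ref{manycurvesremark} yields a smooth simple closed curve $\Gamma' \subset \partial N$ lying in that $(\e/2)$-neighborhood of $\Gamma_0$ such that $\Gamma'$ bounds a nonembedded stable minimal surface $S$ in $N$. By the triangle inequality $\Gamma' \subset N_\e(\Gamma)$, as required.

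Finally, I would argue that $S$ remains a nonembedded stable minimal surface in $\BRR$, not merely in $N$. Minimality is a local condition on mean curvature, and $N$ carries the Euclidean metric, so $S$ is minimal in $\BRR$ as well. The second variation formula contains no ambient Ricci contribution in $\BRR$, so the space of variations and the stability inequality agree whether computed in $N$ or in $\BRR$; thus $S$ is also stable in $\BRR$. Nonembeddedness is an intrinsic property of $S$ and is inherited automatically.

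The genuine technical point, and the place I expect the most care to be needed, is step one: arranging that $\Sigma_0$ is a smooth proper subsurface of $\Sigma$ whose smooth boundary $\Gamma_0$ is simultaneously (i) $\C^0$--close to $\Gamma$ so that the $\e/2$ estimate holds, and (ii) smooth enough that the Meeks--Yau strictly mean convex neighborhood $N$ has smooth boundary through $\Gamma_0$ and that Remark \ref{manycurvesremark} applies in $N$. This is essentially the same step used in the density argument of Theorem \ref{AgopenR3} and is handled in the same way, so no new difficulty arises beyond the standard smoothing of a collar inside the smooth interior of $\Sigma$.
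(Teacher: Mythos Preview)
Your proposal is correct and follows essentially the same route as the paper: pass to a strictly mean convex Meeks--Yau neighborhood $N$ of a proper subsurface $\Sigma_0\subsetneq\Sigma$ with $\Gamma_0=\partial\Sigma_0\subset\partial N$ close to $\Gamma$, then invoke Theorem~\ref{nonembeg} via Remark~\ref{manycurvesremark} to produce $\Gamma'$ near $\Gamma_0$ bounding a nonembedded stable minimal surface in $N$, which is automatically minimal (and stable) in $\BRR$. One small quibble: the sentence ``choosing $\e$ small enough so that an $(\e/2)$-neighborhood of $\Gamma_0$ in $\partial N$ is an annulus'' is phrased as if $\e$ were yours to shrink, whereas $\e$ is given; you should instead pick an auxiliary $\e'\le\e/2$ small enough for the annulus condition and apply Remark~\ref{manycurvesremark} with $\e'$.
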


\begin{pf} Let $\Gamma ,\Gamma_i, \Sigma , \Sigma_i$ and $N_i$ be as in the proof of Theorems \ref{AgopenR3} and \ref{nonuniqueR3}, i.e. $\Gamma\in\A$, $\Sigma\in $AAM$(\Gamma)$, $\Sigma_i\subsetneq \Sigma$, with $\Sigma_i\to\Sigma$ and $N_i$ are strictly mean convex neighborhoods of $\Sigma_i$ such that $\Gamma_i:=\partial\Sigma_i\subset\partial N_i$. Since $N_i$ is
strictly mean convex, Theorem \ref{nonembeg} (see also Remark \ref{manycurvesremark}) implies that for any $\e>0$, there is a simple closed curve $\Gamma'_i\subset N_\e(\Gamma_i)\cap\partial
N_i$, where $N_\e(\Gamma_i)$ is an $\e$-neighborhood of $\Gamma_i$ in the $\C^0$ topology,  such that $\Gamma'_i$ bounds a nonembedded stable minimal surface in $N_i$. Since any minimal surface in $N_i$ is also
minimal in $\BRR$ and since, by construction, the curves can be taken smooth, the proof follows.
\end{pf}

\begin{rmk} The result above shows that the simple close curves in $\BR^3$ bounding nonembedded minimal surfaces, say $\mathfrak{D}$, are very abundant. Note that our
result does not imply the density of $\mathfrak{D}$ in $\A$. Even though the curve $\Gamma'$ is in $\e$ neighborhood of $\Gamma$, it is not
$\C^0$-close to $\Gamma$ (see proof of Theorem \ref{nonembeg}).

On the other hand, even though we cannot say whether $\mathfrak{D}$ is dense in $\A$ in the $\C^0$ topology or not, it is easy to see that
$\mathfrak{D}\cap\A^2$ is not dense in $\A^2$ in the $\C^2$ topology. By \cite{EWW}, any minimal surface $\Sigma$, bounded by a simple closed curve $\Gamma$ in $\BR^3$ with total curvature
$k(\Gamma)\leq 4\pi$ must be embedded. Hence,  a simple closed curve $\Gamma$, satisfying $k(\Gamma)<4\pi$ has a
$\C^2$-neighborhood $N(\Gamma)$ such that for any $\Gamma'\in N(\Gamma)$ the following holds: any minimal surface $\Sigma'$ in $\BR^3$ bounded by $\Gamma'$ is embedded.
Similarly, the result  by Meeks \cite{Me} mentioned in the previous Remark \ref{rmkAgopenR3}  gives such a neighborhood of a curve $\Gamma$. This shows that $\mathfrak{D}$ is not dense in $\A^2$ in the
$\C^2$ topology.
\end{rmk}

\section{Final Remarks}

In the last section, for curves in $\R^3$, we considered the questions of openness and density not only for simple closed curves but also for smooth curves. The same thing can be done for curves on the boundary of a mean convex manifold $M$ and similar questions can be considered in the smooth category. Recall that $\A_g$ denotes the space of simple closed curves in $\partial M$ such that
the minimum genus of the absolutely area minimizing surfaces that they bound is greater than or equal to $g$. $\B_g$ denotes the space of simple
closed curves in $\partial M$ such that the minimum genus of the absolutely area minimizing surfaces that they bound is exactly $g$. Also, $\A$
denotes the space of nullhomologous simple closed curves in $\partial M$ equipped with the $\mathcal C^0$ topology. A natural extension is to
consider the results of this paper in the smooth category. In other words, let  $\A^k$ be equal to $\A\cap C^k$, the space of nullhomologous curves
that are $\C^k$ smooth, considered again with the $\mathcal{C}^0$ topology.


Let also $\A^k_g =\A_g\cap C^k$ and $\B^k_g =\B_g\cap C^k$ be the $\C^k$ smooth curves in $\A_g$ and in $\B_g$ respectively. Then, it is easy to show
that $\A^k_g$ is open in $\A^k$ by using the techniques of this paper (Lemma \ref{Agopen}). Furthermore, if $\partial M$ is smooth then the fact that
$\A_g$ is open and dense in $\A$ (Lemma \ref{Adense}), and $\A^k$ is dense in $\A$ implies that $\A^k_g$ is dense in $\A$. If $\partial M$ is not
smooth we only get that $\A^k_g$ is dense in $\A^k$.

%

Another  question would be whether the similar results are true in the mean convex setting instead of strictly mean convex manifolds. Unfortunately,
most of the results in Section 3, like $\A_g$ is open dense in $\A$, are not true for mean convex $3$-manifolds. The main obstacle is the following.
Let $M$ be a mean convex $3$-manifold and $S$ be a subsurface in $\partial M$ which is a minimal surface. Take a sufficiently small disk $D$ in $S$
such that $D$ is the absolutely area minimizing surface for its boundary. Then, for any simple closed curve in the interior of $D$, the absolutely
area minimizing surface would be the disk it bounds in $D$. Hence, any simple closed curve $\Gamma$ in $\inr(D) \subset \partial M$ would
automatically be in $\B_0$ and therefore there is no way to approximate this $\Gamma$ with the curves in $\A_g$, for $g\geq 1$, since there is an
open neighborhood of $\Gamma$ in $A$ which lies entirely in $\B_0$. This example shows that Theorem \ref{Adense} ($\A_g$ is dense in $\A$) is not
true for mean convex $3$-manifolds whose boundary contains a minimal subsurface. For an extreme example, if $M$ is a mean convex manifold whose
boundary is an absolutely area minimizing sphere, then clearly $\A=\B_0$. The
reason for this is that the horn surgery lemma fails when the absolutely area minimizing surface it bounds is in the boundary of the manifold. What
in particular fails, is that we cannot apply White's bridge principle, Theorem \ref{white4}. In \cite{Wh4}, there is no restriction on $M$ being strictly mean
convex, however it is required that the arc $\a$ is not tangential at its endpoints to the tangent halfspace of the area minimizing surface. Of
course this is not the case when both $\a$ and the absolutely area minimizing surface lie in $\partial M$.  Hence, the relevant results depending on
Theorem \ref{white4}, would not be valid for mean convex $3$-manifolds.

%

In Section 4, we showed that the case of absolutely area minimizing surfaces  and that of minimal surfaces are quite different when bounding extreme curves.
For a strictly mean convex $3$-manifold $M$, while generically a simple closed curve in $\partial M$ bounds a unique absolutely area minimizing
surface in $M$ \cite{Co1}, \cite{CE}, the situation is opposite for minimal surfaces, as generically a simple closed curve in $\partial M$ bounds
more than one minimal surface in $M$ (Theorem 4.1). Also, for $H_2(M,Z)=0$, while absolutely area minimizing surfaces with disjoint boundaries in
$\partial M$ are also disjoint, this is no longer true for minimal surfaces. In Theorem 4.2, we proved that for any strictly mean convex $3$-manifold
$M$, there are disjoint simple closed curves in $\partial M$, bounding intersecting stable minimal surfaces.

It might be interesting to study the space of curves bounding nonembedded minimal surfaces as described in Section 4. In other words, we showed the
existence of such extreme curves for any strictly mean convex $3$-manifold $M$ in Theorem \ref{nonembeg}. However, a condition on the curve to
guarantee the embeddedness of all minimal surfaces it bounds would be very interesting. In our construction for the curves bounding nonembedded
minimal surfaces, we used two intersecting minimal surfaces in $M$ which have disjoint boundaries in $\partial M$. Then, we used a bridge between
them to get a nonembedded minimal surface. This implies that for the final boundary curve $\Gamma$ and near the bridge, the ratio between the extrinsic
distance (in $\partial M$) and the intrinsic distance (in $\partial M$) gets very large. Maybe, a bound (of course, depending on $M$) on this ratio
might be a good condition to guarantee the embeddedness of all minimal surfaces which $\Gamma$ bounds in $M$.

Furthermore, for any $g>0$, Almgren and Thurston showed existence of unknotted simple closed curves $\Gamma_g$ in $\BR^3$ with the property
that the absolutely area minimizing surface $\Gamma_g$ bounds has genus at least $g$ \cite{AT}. Indeed, they showed that for any $g>0$, there are
curves $\Gamma_g$ such that any embedded surface they bound in their convex hull of the curve must have genus at least $g$. Since any minimal surface
with boundary $\Gamma$ must be in the convex hull of $\Gamma$, and any absolutely area minimizing surface is embedded, this automatically implies
that the absolutely area minimizing surface $\Gamma_g$ bound must have genus at least $g$. In this paper's terminology, this means Almgren and
Thurston showed that for any $g>0$, $\A_g \neq \emptyset$. However, their examples are not extreme curves. Here, one of the corollaries of Theorem
\ref{Adense} is that for any convex body $\Omega$ in $\BR^3$ and for any $g>0$, there is a curve $\Gamma_g \in \partial \Omega$ such that the
absolutely area minimizing surface, which $\Gamma_g$ bounds, has genus at least $g$.

-

\end{document}